\newtheoremstyle{plain2}{\topsep}{\topsep}%
     {\itshape}
     {}
     {\bfseries}
     {.}
     {.5em}
     {\thmnumber{(#2)}\thmname{ #1}\thmnote{ #3}}
\theoremstyle{plain2}
\newtheorem{teo}{Theorem}[section]
\newtheorem{prop}[teo]{Proposition}
\newtheorem{coro}[teo]{Corollary}
\newtheorem{lemma}[teo]{Lemma}
\newtheoremstyle{definition2}{\topsep}{\topsep}%
     {}
     {}
     {\bfseries}
     {.}
     {.5em}
     {\thmnumber{(#2)}\thmname{ #1}\thmnote{ #3}}
\theoremstyle{definition2}
\newtheorem{rem}[teo]{Remark}
\def\R{\mathbb{R}}
\def\a{\alpha}
\def\b{\beta}
\def\g{\gamma}
\def\d{\delta}
\def\ep{\varepsilon}
\def\eps{\varepsilon}
\def\f{\varphi}
\def\l{\lambda}
\def\o{\omega}
\def\G{\Gamma}
\def\D{\Delta}
\def\O{\Omega}
\def\ub{u_\b}
\def\vb{v_\b}
\def\lb{\l_\b}
\def\mub{\mu_\b}
\def\hb{h_\b}
\def\kb{k_\b}
\def\Omegabar{\overline{\Omega}}
\def\ubar{\bar{u}}
\def\vbar{\bar{v}}
\def\hbar{\bar{h}}
\def\kbar{\bar{k}}
\def\xo{x_0}
\def\fs{|x-\xo|^{N-2}}
\def\nablat{\nabla_\theta}
\def\ur{u_{(r)}}
\def\vr{v_{(r)}}
\def\urtilde{\tilde{u}_{(r_n)}}
\def\vrtilde{\tilde{v}_{(r_n)}}
\def\Ntilde{\tilde{N}}
\def\Eb{E_\b}
\def\Hb{H_\b}
\def\Nb{N_\b}
\def\Rb{R_\b}
\def\Omegatilde{\tilde{\Omega}}
\def\rbar{\bar{r}}
\def\spc{H^1_0(\O)}
\def\dis{\displaystyle}
\newcommand{\parder}[2]{\partial_{ #2} #1}
\title{\sc Uniform H\"older bounds for nonlinear Schr\"odinger systems with strong competition
\footnote{Work partially supported by MIUR, Project ``Metodi
Variazionali ed Equazioni Differenziali Non Lineari'' }}
\author{%
Benedetta Noris, Hugo Tavares, Susanna Terracini and Gianmaria Verzini}
\begin{document}

\maketitle

\begin{abstract}
For the positive solutions of the Gross--Pitaevskii system
\[
\left\{\begin{array}{llll}
-\D \ub + \lb \ub = \o_1 \ub^3 -\b \ub \vb^2 \\
-\D \vb + \mub \vb = \o_2 \vb^3 -\b \ub^2 \vb
\end{array}\right.
\]
we prove that $L^\infty$--boundedness implies $C^{0,\alpha}$--boundedness, uniformly as $\b\to+\infty$, for every $\a\in(0,1)$. Moreover we prove that the limiting profile, as $\b\to+\infty$, is Lipschitz continuous. The proof relies upon the blow--up technique and the monotonicity formulae by Almgren and Alt--Caffarelli--Friedman. This system arises in the Hartree-Fock approximation theory for binary mixtures of Bose--Einstein condensates in different hyperfine states. Extensions to systems with $k>2$ densities are given.

\bigskip

\noindent\emph{MSC}: 35B40, 35B45, 35J55.

\noindent\emph{Keywords}: Strongly competing systems, asymptotic H\"older estimates, Almgren's formula.
\end{abstract}

\section{Introduction}
The purpose of this paper is to prove uniform bounds in H\"older
norm for families of positive solutions to nonlinear Schr\"odinger equations of the form
\begin{equation} \label{system_premain}
\left\{\begin{array}{llll}
-\D \ub + \lb \ub = \o_1 \ub^3 -\b \ub \vb^2 \\
-\D \vb + \mub \vb = \o_2 \vb^3 -\b \ub^2 \vb \smallskip\\
u_\b,v_\b\in\spc,
  \end{array}\right.
\end{equation}
for the competition parameter $\b\in(0,+\infty)$.
Such systems arise in different physical applications, such as the
determination of standing waves in a binary mixture of Bose-Einstein
condensates in two different hyperfine states. While the sign of the
parameter $\o_i$ discriminates between the focusing and defocusing
behavior of a single component, the sign of $\beta$ determines the
type of interplay between the two states. When positive, the two
states are in competition and repel each other. In this paper we
deal with diverging interspecific competition rates (both in the
focusing and defocusing case). The limiting behavior is known for
the ground state solutions: as $\beta\to+\infty$ the wave amplitudes
segregate, that is, their supports tend to be disjoint. This
phenomenon, called phase separation, has been studied, in the case
of $\o_i> 0$ (focusing), starting from \cite{ctv,ctv2}, and, when
$\o_i<0$ (defocusing), in \cite{clll}. As far as the excited states
are concerned,  the recent literature shows that other families of
solutions exist for large $\beta$'s  (\cite{ww, dww, TV}). The
asymptotic behavior of such families of  solutions has been
investigated in  \cite{ww3}, where, in the case of planar systems,
it is proved uniform convergence to a segregated limiting profile
$(u,v)$, where each component satisfies the equation
\begin{equation}\label{eq:limit_sys}
 \left\{\begin{array}{lll}
-\D u +\l u &=& \o_1 u^3  \quad \text{ in } \{u>0\},\\
  -\D v +\mu v &=& \o_2 v^3  \quad \text{ in } \{v>0\}.
\end{array}\right.
\end{equation}
In this paper we improve the result of \cite{ww3}, proving bounds in
H\"older norms whenever $\O\subset\R^N$ is a smooth bounded domain,
in dimension $N=2,3$ (and also in higher dimension, provided the
cubic nonlinearities are replaced with subcritical ones). Besides
the validity of the equations above, we prove Lipschitz regularity
of the limiting profile. Our result relies upon the blow--up
technique (section \ref{sec:blowup}) and suitable Liouville--type
theorems (section \ref{section_Liouville}). Such a strategy has been
already adopted by some of the authors in \cite{ctv4} in proving
uniform H\"older estimates for competition--diffusion systems with
Lotka--Volterra type of interactions. The arguments there, however,
though helpful in the present situation, need to be complemented
with some new ideas, including a proper use of the Almgren's
frequency formula \cite{Alm}. This requires the systems to have a
gradient form. Let us mention that H\"older estimates for (non
gradient) coupling arising in combustion theory have been obtained
in \cite{cr}. Regularity of the limiting profile and its nodal set,
for ground states and other minimizing vector solutions has been
established in \cite{ctv15,caflin}. Our main results write as
follows.

\begin{teo}\label{theorem_premain}
Let $\ub, \vb$ be positive solutions of \eqref{system_premain} uniformly bounded in $L^\infty(\Omega)$, where $\lb,\,\mub$ are bounded in $\R$ and $\o_1,\o_2$ are fixed constants. Then for every $\a \in (0,1)$ there exists $C>0$, independent of $\b$, such that
\[
\left\|(u_\b,v_\b)\right\|_{C^{0,\a}(\overline{\O})}\leq C\quad\text{for every }\b>0.
\]
\end{teo}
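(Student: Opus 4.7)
The strategy is a blow-up/contradiction argument, in the spirit of \cite{ctv4} but adapted to the gradient-type coupling via Almgren's frequency formula.

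Suppose the conclusion fails. Then there exist $\a\in(0,1)$, a sequence $\b_n\to\infty$ (or at least a sequence for which the claim fails), and corresponding solutions $(u_n,v_n)$ uniformly bounded in $L^\infty$ such that
\[
L_n:=\max\bigl\{[u_n]_{C^{0,\a}(\Omegabar)},\,[v_n]_{C^{0,\a}(\Omegabar)}\bigr\}\to+\infty.
\]
Pick points $x_n,y_n\in\Omegabar$ (nearly) realizing this seminorm, set $r_n:=|x_n-y_n|$, and note that the $L^\infty$ bound forces $r_n\to 0$. I would then rescale
\[
\tu_n(x):=\frac{u_n(x_n+r_n x)}{L_n r_n^\a},\qquad \tilde v_n(x):=\frac{v_n(x_n+r_n x)}{L_n r_n^\a},
\]
so that $[\tu_n]_{C^{0,\a}}\le 1$, $[\tilde v_n]_{C^{0,\a}}\le 1$, and the Hölder quotient of at least one of the two components at the pair $(0,(y_n-x_n)/r_n)$ is equal to $1$. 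The rescaled pair solves a system of the same shape with new coefficients $\tilde\l_n=r_n^2\lb\to 0$, $\tilde\o_1^{(n)}=\o_1 L_n^2 r_n^{2+2\a}$, and coupling $\tilde\b_n=\b_n L_n^2 r_n^{2+2\a}$, on the rescaled domain $\Omegatilde_n=(\Omega-x_n)/r_n$, which invades either $\R^N$ or a half-space.

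Next I would pass to a limit. The uniform Hölder bound gives local uniform convergence of $(\tu_n,\tilde v_n)$ to some nontrivial Hölder pair $(\tu,\tilde v)$ on the limiting domain, with $[\tu]_{C^{0,\a}}\le 1$, $[\tilde v]_{C^{0,\a}}\le 1$ and Hölder quotient equal to $1$ at two specific points at distance $1$. A trichotomy appears according to the limits of $\tilde\o_1^{(n)},\tilde\o_2^{(n)}$ and $\tilde\b_n$: (a) $\tilde\o_i^{(n)}\to 0$ and $\tilde\b_n\to\bar\b\in[0,\infty)$, producing an entire pair of non-negative harmonic functions coupled by a bounded competitive term; (b) $\tilde\b_n\to+\infty$, yielding in the limit a segregated pair $(\tu,\tilde v)$ with $\tu\cdot\tilde v\equiv 0$ and each component subharmonic on its positivity set; and in each case one must also address the boundary scenario when $x_n\to\partial\Omega$, which is handled by flattening the boundary and a reflection argument (or by the usual interior argument once $\dist(x_n,\partial\Omega)/r_n\to+\infty$).

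Finally, in every case I would invoke the Liouville-type results of Section \ref{section_Liouville} to derive a contradiction: such nontrivial entire limits with sublinear ($\a<1$) growth cannot exist. In scenario (a) this is a comparison/subharmonicity argument; in scenario (b), which is the real heart of the matter, the gradient structure allows one to apply Almgren's monotonicity formula to the pair, getting a lower bound on the frequency at infinity which forces at least linear growth of $(\tu,\tilde v)$, contradicting $\a<1$; the ACF formula complements this by ruling out ``slow'' segregated configurations. The main obstacle I anticipate is precisely this segregated case: one must prove that Almgren's frequency is well defined and monotone along the sequence (using the gradient character of the coupling), pass to the limit in the frequency, and then combine this with the normalization $[\tu]_{C^{0,\a}}\le 1$ to reach a contradiction. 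The boundary blow-up and the careful choice of the normalization (so that $\tilde\b_n L_n^{-2}r_n^{-2-2\a}$ yields a sensible limit) are the other delicate technical points.
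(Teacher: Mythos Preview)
Your blow-up architecture matches the paper's, and the split into the cases $\tilde\b_n$ bounded versus $\tilde\b_n\to+\infty$ is correct. However, your description of the contradiction in the segregated case (b) misidentifies the mechanism, and this is the heart of the proof. You say Almgren's monotonicity gives ``a lower bound on the frequency at infinity which forces at least linear growth,'' contradicting $\a<1$. In fact the Almgren frequency of the limit is exactly~$\a$, not $\geq 1$: the global $\a$-H\"older bound together with $u_\infty(x_0)=0$ gives $H(r)\leq Cr^{2\a}$ and hence $N(r)\leq\a$ for large $r$, while $N(r)\geq\a$ follows from the behaviour of $H$ as $r\to0^+$. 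So Almgren alone produces no growth contradiction. What the paper does instead is: first apply the ACF-based Proposition~\ref{prop_liouville_disjoint_supports} to the segregated limit to obtain $v_\infty\equiv0$ and $\{u_\infty>0\}$ connected; then, since $N\equiv\a$ at every zero of $u_\infty$, Proposition~\ref{prop_almgren_rescaled} makes $u_\infty$ $\a$-homogeneous about each such point, so $\{u_\infty=0\}$ is a cone at each of its points and hence an affine subspace; connectedness of $\{u_\infty>0\}$ forces its dimension $\leq N-2$, so it has zero capacity, $u_\infty$ extends harmonically to all of $\R^N$, and Corollary~\ref{coro_liouville_harmonic} yields the contradiction. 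Your plan is missing this homogeneity/capacity step entirely, and the case it must ultimately handle is precisely the one where $v_\infty\equiv0$ and the frequency is $\a<1$.

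Two smaller points. In case (a) the limit is not ``harmonic functions coupled by a bounded term'' but a nonnegative entire solution of $-\Delta u=-uv^2$, $-\Delta v=-u^2v$; ruling this out is not a generic comparison argument but the tailored ACF-type monotonicity of Lemma~\ref{lemma_monotonicity_formula_not_disjoint_supports} and Proposition~\ref{prop_liouville_not_disjoint_supports}. And for the H\"older bound the paper does not flatten $\partial\O$: when the blow-up domain is a half-space, the limit vanishes on the boundary hyperplane and one simply reflects evenly and applies Proposition~\ref{prop_liouville_disjoint_supports} to the two halves. Flattening of $\partial\O$ enters only later, for the Lipschitz regularity of the limiting profile up to the boundary.
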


\begin{teo}\label{theorem_lipschitz}
Under the assumptions of the previous theorem, there exists a pair
$(u,v)$ of Lipschitz continuous functions such that, up to a
subsequence, there holds
\begin{itemize}
\item[(i)] $\ub\to u, \vb\to v$ in $C^{0,\a}(\overline{\Omega})\cap H^1(\Omega)$, $\forall \ \a\in (0,1)$;
\item[(ii)] $u\cdot v\equiv 0$ in $\Omega$ and $\displaystyle \int_\Omega \b u_\b^2 v_\b^2\rightarrow 0$ as $\b\rightarrow +\infty$;
\item[(iii)] the limiting functions $u,v$ satisfy system \eqref{eq:limit_sys} with
$\l:=\lim_{\b\to+\infty} \l_\b$, $\mu:=\lim_{\b\to+\infty} \mu_\b$.
\end{itemize}
\end{teo}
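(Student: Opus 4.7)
The plan is to use Theorem~\ref{theorem_premain} to extract a convergent subsequence, combine standard energy estimates with the continuity of the limit to establish items (i)--(iii), and then invoke the blow--up and monotonicity machinery announced in the abstract for the Lipschitz regularity. The last step is the main obstacle.

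From Theorem~\ref{theorem_premain} and Ascoli--Arzel\`a, via a diagonal argument over a sequence $\alpha_n \uparrow 1$, I extract a subsequence $(\ub,\vb)$ converging in $C^{0,\alpha}(\overline{\Omega})$ for every $\alpha \in (0,1)$ to a nonnegative pair $(u,v)$. Testing the two equations of~\eqref{system_premain} by $\ub$ and $\vb$ respectively and using the uniform $L^\infty$ bound yields
\[
\int_\Omega |\nabla \ub|^2 + \int_\Omega |\nabla \vb|^2 + \beta \int_\Omega \ub^2 \vb^2 \le C
\]
uniformly in $\beta$. Hence $(\ub,\vb) \rightharpoonup (u,v)$ weakly in $H^1_0(\Omega)$ and $\int_\Omega \ub^2 \vb^2 \to 0$; combined with the uniform convergence this forces $uv \equiv 0$ on $\Omega$.

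Since $u, v$ are continuous with disjoint positivity sets, $\{u>0\}$ and $\{v>0\}$ are disjoint open subsets of $\Omega$. On any compact $K \subset \{u>0\}$ one has $\ub \ge c > 0$ for $\beta$ large, so rewriting the second equation of~\eqref{system_premain} as $-\D \vb + (\mub + \beta \ub^2)\vb = \o_2 \vb^3$ and exploiting the divergence of $\beta \ub^2$ on $K$, a standard supersolution comparison gives $\beta \ub \vb^2 \to 0$ in $L^1_{\mathrm{loc}}(\{u>0\})$. Passing to the limit in the first equation yields $-\D u + \lambda u = \o_1 u^3$ weakly on $\{u>0\}$, and symmetrically for $v$, proving (iii). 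Testing this limit equation by $u$ (admissible since $u \in H^1_0(\Omega)$ and $\nabla u = 0$ a.e.\ on $\{u=0\}$) and comparing with the integral identity obtained by testing the $\beta$--equation by $\ub$, the uniform convergence of the lower--order terms and the weak lower semi-continuity of the Dirichlet integral together force both $\int_\Omega |\nabla \ub|^2 \to \int_\Omega |\nabla u|^2$ and $\beta \int_\Omega \ub^2 \vb^2 \to 0$. This completes (i) and (ii).

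For the Lipschitz regularity, inside $\{u>0\}$ and $\{v>0\}$ the limits satisfy subcritical elliptic equations and are therefore smooth by elliptic bootstrap, so the delicate region is the free boundary $\Gamma := \{u=v=0\}$. I argue by contradiction via blow--up at $\Gamma$: if $(u,v)$ failed to be Lipschitz at some $x_0 \in \Gamma$, there would exist $x_n \to x_0$ and $r_n \to 0^+$ with $L_n := r_n^{-1} \sup_{B_{r_n}(x_n)} (u + v) \to +\infty$, and one considers the rescalings $\tilde u_n(y) := L_n^{-1} r_n^{-1} u(x_n + r_n y)$ and $\tilde v_n$ normalized to have unit supremum on $B_1$. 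The Almgren frequency formula---applicable thanks to the gradient structure of the system---controls the homogeneity of the rescaled sequence, and combined with the Alt--Caffarelli--Friedman monotonicity formula it allows one to extract a nontrivial segregated limit $(U,V)$ on $\R^N$ with each component nonnegative and harmonic on its positivity set, of controlled growth. The Liouville-type theorems of Section~\ref{section_Liouville} then rule out the existence of such a profile consistent with $L_n\to+\infty$, producing the required contradiction. Coupling the gradient--structure hypothesis of the Almgren formula with the Liouville classification of blow--up limits is the main technical obstacle.
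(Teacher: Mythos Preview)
Your argument for items (i)--(iii) is essentially correct, and in one place slightly different from the paper's: you obtain $\beta\int_\Omega u_\b^2v_\b^2\to 0$ and strong $H^1$--convergence by comparing the energy identity for $(u_\b,v_\b)$ with the one for the limit $(u,v)$, whereas the paper integrates the equation over $\Omega$, uses Hopf's lemma to bound $\int_\Omega \b u_\b v_\b^2$ (without the square on $u_\b$), and then tests with $u_\b-u$. Both routes work; yours avoids the Hopf lemma but requires the (correct, though not entirely trivial) observation that $u$ itself is an admissible test function for the limiting equation on $\{u>0\}$.

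The Lipschitz part, however, has a genuine gap. Your proposed contradiction relies on the Liouville theorems of Section~\ref{section_Liouville}, but those results are stated and proved only for $\alpha\in(0,1)$ and explicitly \emph{fail} at $\alpha=1$; see Remark~\ref{rem:not_able}, where the linear profile $u(x)=x_1$ is exhibited as a nontrivial entire solution with bounded Lipschitz seminorm. A blow--up of the limiting profile at a free boundary point, normalized as you describe, may well converge to such a linear segregated state, and nothing in Section~\ref{section_Liouville} rules it out. So the step ``the Liouville-type theorems then rule out the existence of such a profile consistent with $L_n\to+\infty$'' does not go through.

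The paper's approach in Section~\ref{sec:lipsch_limit} is structurally different and bypasses this obstruction. It works directly with the limit $(u,v)$: one defines a modified Almgren quotient $N(r)=E(r)/H(r)+1$ centered at $x_0\in\overline{\Gamma}$ and proves it is almost monotone (Proposition~\ref{prop_almgren_interior}). The key point is then Lemma~\ref{rem:alm_fin_1}: since $(u,v)\in C^{0,\alpha}$ for \emph{every} $\alpha<1$, one has $H(r)\leq C_\alpha r^{2\alpha}$, and comparing with the differential inequality $(\log H)'=2(N-1)/r$ forces $N(0^+)\geq 2$. This, together with the uniform bound $\tilde N(r)\leq \tilde N(\bar r)\leq C$, yields the Morrey--type estimate $r^{-N}\int_{B_r(x_0)}(|\nabla u|^2+|\nabla v|^2)\leq C$ uniformly in $x_0\in\overline{\Gamma}$ (Lemma~\ref{lemma_previous_estimates_for_theorem_lipschitz}). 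Local Lipschitz continuity then follows by a maximum--principle argument translating the failure of Lipschitz to the free boundary (Proposition~\ref{prop:lip}). No blow--up limit and no Liouville classification are used at this stage.
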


For the sake of simplicity we consider here systems of two
components, but all the results extend to the case of systems of $k$
equations
\begin{equation} \label{system_premain_k}
\left\{\begin{array}{llll}
\displaystyle -\D u_i + \lambda_i u_i = \omega_i u^3_i -\b u_i\sum_{i\neq j}\beta_{ij} u^2_j,\quad i=1,\dots,k \smallskip\\
u_i \in \spc,
  \end{array}\right.
\end{equation}
provided that it possesses a gradient structure, i.e.
$\beta_{ij}=\beta_{ji}$ (see Remark \ref{rem:k}).

The study of system \eqref{system_premain} will be carried out as a
particular case of a more general one, where $L^2$--perturbations
are allowed. The reason for this approach is that, in a forthcoming
paper, the authors intend to present a variational construction to
obtain, for every fixed $\b$, several solutions of
\eqref{system_premain}; the present estimates, in their more general
version, will then be used to study how, and in which sense, such a
variational structure passes to the limit as $\b\to+\infty$. To be
more precise, let us consider the system
\begin{equation} \label{system_main}
\left\{\begin{array}{llll}
-\D \ub + \lb \ub = \o_1 \ub^3 -\b \ub \vb^2 +\hb \quad &\text{in }  \Omega\\
-\D \vb + \mub \vb = \o_2 \vb^3 -\b \ub^2 \vb +\kb \quad &\text{in }  \Omega\smallskip\\
\ub,\ \vb \in H^1_0(\Omega), \quad \ub, \vb \geq 0 \quad &\text{in } \Omega
  \end{array}\right.
\end{equation}
under the assumptions: $\hb, \kb$ are uniformly bounded in
$L^2(\Omega)$, $\lb, \mub \in \R$ are bounded in $\R$, $\o_1, \o_2
\in \R$ are fixed constants. Defining
\begin{equation}\label{eq:alphastar}
\a^*=\begin{cases}
1 & \text{if }N=2\\
1/2 & \text{if }N=3,\\
\end{cases}
\end{equation}
then, by Sobolev embedding, we have that any solution of \eqref{system_main} belongs to $C^{0,\a}$, for every $\a\in (0,\a^*)$ (and even $\a=\a^*$ if $N=3$). As a consequence, in the general case, we can not expect boundedness for every H\"older exponent. In fact we have the following.
\begin{teo}\label{theorem_main}
Let $\ub, \vb$ be solutions of (\ref{system_main}) uniformly bounded in $L^\infty(\Omega)$. Then for every $\a \in (0,\a^*)$ there exists $C>0$, independent of $\b$, such that
\[
\left\|(u_\b,v_\b)\right\|_{C^{0,\a}(\overline{\O})}\leq C\quad\text{for every }\b>0.
\]
\end{teo}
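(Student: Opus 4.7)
The plan is a blow--up argument by contradiction: if $L^\infty$ bounds do not imply uniform $C^{0,\a}$ bounds, rescaling around a near--maximizer of the H\"older quotient produces a global limit profile ruled out by a Liouville--type rigidity result. Suppose that for some $\a\in(0,\a^*)$ there is a sequence of solutions $(u_n,v_n)=(u_{\b_n},v_{\b_n})$ of \eqref{system_main} with $L_n:=[u_n]_{C^{0,\a}(\Omegabar)}+[v_n]_{C^{0,\a}(\Omegabar)}\to+\infty$. Pick $x_n,y_n\in\Omegabar$ at which one of the two H\"older quotients (say for $u_n$) almost attains $L_n$, set $r_n:=|x_n-y_n|$, and note that the uniform $L^\infty$ bound on $(u_n,v_n)$ forces $r_n\to 0$.

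Define the rescaled functions on $\Omega_n:=(\Omega-x_n)/r_n$ by
\[
\tilde u_n(y):=\frac{u_n(x_n+r_n y)-u_n(x_n)}{L_n r_n^\a},\qquad \tilde v_n(y):=\frac{v_n(x_n+r_n y)-v_n(x_n)}{L_n r_n^\a}.
\]
Then $(\tilde u_n,\tilde v_n)(0)=0$, $[(\tilde u_n,\tilde v_n)]_{C^{0,\a}(\Omega_n)}\leq 1$, and the difference quotient at $0$ and $e_n:=(y_n-x_n)/r_n\in S^{N-1}$ stays bounded below by $1/2$. By Ascoli--Arzel\`a a subsequence converges, locally uniformly and in $C^{0,\a'}$ for every $\a'<\a$, to a limit $(U,V)$ with $U(0)=V(0)=0$ and $[U]_{C^{0,\a}}+[V]_{C^{0,\a}}\geq 1/2$, defined either on $\R^N$ (if $\dist(x_n,\partial\Omega)/r_n\to+\infty$) or on a half--space with homogeneous Dirichlet datum on its boundary (after a standard flattening of $\partial\Omega$).

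A direct computation shows that $(\tilde u_n,\tilde v_n)$ satisfies a rescaled system whose leading competition coefficient is $M_n:=\b_n L_n^2 r_n^{2+2\a}$, whose leading self--interaction coefficient is $M_n/\b_n$, whose linear zero--order terms are of order $r_n^2$, and whose rescaled perturbations $\tilde h_n,\tilde k_n$ satisfy
\[
\|\tilde h_n\|_{L^2(B_R)}+\|\tilde k_n\|_{L^2(B_R)}\leq C_R\,\frac{r_n^{2-\a-N/2}}{L_n}\longrightarrow 0,
\]
the convergence being the unique step that forces the restriction $\a<\a^*=2-N/2$. Up to a further subsequence, two main regimes arise: if $M_n$ stays bounded, $U$ and $V$ are harmonic (up to a lower--order, uniformly controlled nonlinearity); if $M_n\to+\infty$, dividing the equations by $M_n$ yields $UV\equiv 0$ in the limit, with $U$ and $V$ harmonic inside their respective positivity sets.

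The contradiction is closed by invoking the Liouville--type results of Section~\ref{section_Liouville}, which in each regime rule out the existence of a non--constant limit profile with H\"older growth of exponent $\a<1$, contradicting $U(0)=V(0)=0$ together with $[U]_{C^{0,\a}}+[V]_{C^{0,\a}}\geq 1/2$. The main obstacle is the segregated regime $M_n\to+\infty$: its rigidity rests on applying the Almgren frequency formula to $W:=U-V$, which is globally harmonic across the interface $\{U=V=0\}$ precisely because the gradient structure of the system makes fluxes cancel across the free boundary, together with the Alt--Caffarelli--Friedman monotonicity inequality, which forbids two non--trivial non--negative subharmonic functions with disjoint supports from simultaneously vanishing at the origin with sub--linear growth. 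The boundary blow--up case, where the limit is defined on a half--space, is handled by the analogous versions of these monotonicity formulas, exploiting the zero Dirichlet datum.
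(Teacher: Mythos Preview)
Your outline follows the right blow--up/Liouville strategy, but two of the key steps do not go through as written, and they diverge substantially from how the paper actually closes the argument.

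First, by subtracting $u_n(x_n)$ and $v_n(x_n)$ you lose non--negativity of the rescaled densities, so the Alt--Caffarelli--Friedman formula (Proposition~\ref{prop_liouville_disjoint_supports}) no longer applies to $U,V$; moreover, the rescaled equations acquire cross terms depending on the possibly unbounded quantities $u_n(x_n)/(L_n r_n^\a)$, $v_n(x_n)/(L_n r_n^\a)$, so the limiting system is not the clean one you describe. The paper does \emph{not} subtract: it rescales by $\bar u_\b(x)=u_\b(x_\b+r_\b x)/(L_\b r_\b^\a)$ and proves separately, via the exponential decay estimate (Lemma~\ref{lemma_exponential_decay}) and a delicate argument (Lemma~\ref{lemma_wise_r_k}), that $\bar u_\b(0),\bar v_\b(0)$ are bounded. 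Second, your treatment of the two regimes is incorrect. When $M_n$ stays bounded the limit is \emph{not} harmonic: with the choice $\b M_\b=1$ the limit solves $-\Delta u_\infty=-u_\infty v_\infty^2$, $-\Delta v_\infty=-u_\infty^2 v_\infty$ on $\R^N$, and the paper rules this out via the separate Liouville theorem Proposition~\ref{prop_liouville_not_disjoint_supports} (based on a perturbed ACF formula), thereby proving $\b M_\b\to+\infty$. In the segregated regime, your central claim that $W=U-V$ is globally harmonic ``because fluxes cancel'' is not valid for Gross--Pitaevskii coupling: for the approximations one has $-\Delta(\bar u_\b-\bar v_\b)=\b M_\b\,\bar u_\b\bar v_\b(\bar u_\b-\bar v_\b)+o(1)$, and the interaction term is only \emph{bounded} in $L^1_{\mathrm{loc}}$ (see \eqref{int_Br_b_M_b_ubar_vbar^2_bounded}), not vanishing, so a singular measure on the free boundary cannot be excluded a~priori. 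The paper's route is different: it first uses ACF to deduce $v_\infty\equiv0$ and that $\{u_\infty>0\}$ is connected (Remark~\ref{rem:v_infty=0}); then it proves an Almgren formula for the \emph{pair} $(u_\infty,v_\infty)$ (Proposition~\ref{prop_almgren_rescaled}) by passing to the limit in the $\b$--dependent quotient $N_\b(r)$, which requires the strong $H^1_{\mathrm{loc}}$ convergence established in Lemma~\ref{lemma_H^1_convergence}; this yields $N(r)\equiv\a$, hence $u_\infty=r^\a g(\theta)$, so $\{u_\infty=0\}$ is an affine subspace of dimension at most $N-2$, forcing $u_\infty$ to be globally harmonic and non--constant---the final contradiction.
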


\begin{teo}\label{theorem_uniform_convergence}
Let $\ub, \vb$ be solutions of (\ref{system_main}) uniformly bounded in $L^\infty(\Omega)$. Then there exists $(u,v) \in C^{0,\a}$, $\forall \ \a\in (0,\a^*)$, such that (up to a subsequence) there holds, as $\b\to+\infty$,
\begin{itemize}
\item[(i)] $\ub\to u, \vb\to v$ in $C^{0,\a}(\overline{\Omega})\cap H^1(\Omega)$, $\forall \ \a\in (0,\a^*)$;
\item[(ii)] $u\cdot v\equiv 0$ in $\Omega$ and $\displaystyle \int_\Omega \b u_\b^2 v_\b^2\rightarrow 0$;
\item[(iii)] the limiting functions $u,v$ satisfy the system
 \begin{equation}\label{limiting_eq_for_u_v}\left\{\begin{array}{lll}
-\D u +\l u &=& \o_1 u^3 +h \quad \text{ in } \{u>0\},\\
  -\D v +\mu v &=& \o_2 v^3 +k \quad \text{ in } \{v>0\},
\end{array}\right.
\end{equation}
where $\l:=\lim \l_\b$, $\mu:=\lim \mu_\b$, and $h,k$ denote the $L^2$--weak limits of $h_\b,k_\b$ as $\b\rightarrow +\infty$.
\end{itemize}
\end{teo}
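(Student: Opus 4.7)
The plan is to use Theorem \ref{theorem_main} as the main compactness input, read off the limiting system by passing to the limit locally on the positivity sets, and finally upgrade weak $H^1$ convergence to strong convergence via an energy identity.

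\emph{Compactness, weak convergence and segregation.} By Theorem \ref{theorem_main} the family $\{(u_\beta,v_\beta)\}$ is uniformly bounded in $C^{0,\alpha'}(\overline\Omega)$ for every $\alpha'<\alpha^*$. Arzel\`a--Ascoli plus a diagonal extraction along a sequence $\alpha_k\uparrow\alpha^*$ yields a subsequence and $u,v\in\bigcap_{\alpha<\alpha^*}C^{0,\alpha}(\overline\Omega)$ with $u_\beta\to u$, $v_\beta\to v$ in $C^{0,\alpha}(\overline\Omega)$ for every $\alpha<\alpha^*$. Testing each equation in \eqref{system_main} with its own solution and summing gives
\[
\int_\Omega |\nabla u_\beta|^2+|\nabla v_\beta|^2 + 2\beta\int_\Omega u_\beta^2 v_\beta^2 \le C
\]
from the $L^\infty$ bound on $u_\beta,v_\beta$, the $L^2$ bound on $h_\beta,k_\beta$ and the boundedness of $\lambda_\beta,\mu_\beta$. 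Hence along a further subsequence $u_\beta\rightharpoonup u$, $v_\beta\rightharpoonup v$ in $H^1_0(\Omega)$; moreover $\int u_\beta^2 v_\beta^2\le C/\beta\to 0$, and uniform convergence gives $\int u^2 v^2=0$, so $uv\equiv 0$ in $\Omega$.

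\emph{Limit equations on positivity sets.} Given $\psi\in C^\infty_c(\{u>0\})$, choose $\varphi\in C^\infty_c(\{u>0\})$ with $\varphi\equiv 1$ on $\supp\psi$; by uniform convergence there exists $c>0$ with $u_\beta\ge c$ on $\supp\varphi$ for large $\beta$, while $v_\beta\to 0$ uniformly on $\supp\varphi$ since $v=0$ there. Testing the $v_\beta$-equation with $v_\beta\varphi^2$ and using the $H^1$ bound on $v_\beta$, every term except $\beta\int u_\beta^2 v_\beta^2\varphi^2$ vanishes in the limit, which forces $\beta\int_{\supp\psi}v_\beta^2\to 0$ via $u_\beta^2\ge c^2$ on $\supp\varphi$; consequently
\[
\Bigl|\int_\Omega \beta u_\beta v_\beta^2 \psi\Bigr|\le \|u_\beta\|_\infty\|\psi\|_\infty\, \beta\int_{\supp\psi}v_\beta^2 \to 0.
\]
Passing to the limit in the $u_\beta$-equation tested against $\psi$---weak $H^1$ convergence for the Laplacian term, uniform convergence for $u_\beta^3$, weak $L^2$ convergence of $h_\beta$ paired with $\psi$---yields \eqref{limiting_eq_for_u_v} for $u$ on $\{u>0\}$, and symmetrically for $v$.

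\emph{Strong $H^1$ convergence and $\beta\int u_\beta^2 v_\beta^2\to 0$.} Testing the limit equation on $\{u>0\}$ with the truncation $(u-\epsilon)^+$, whose support is compactly contained in $\{u>0\}$ by continuity of $u$, and sending $\epsilon\to 0^+$ (monotone convergence for the gradient term together with $\nabla u=0$ a.e.\ on $\{u=0\}$) gives the energy identity $\int_\Omega|\nabla u|^2+\lambda u^2=\int_\Omega\omega_1 u^4+hu$, and analogously for $v$. The corresponding identity for $(u_\beta,v_\beta)$, obtained by testing with $u_\beta$ and $v_\beta$, contains in addition the nonnegative term $2\beta\int u_\beta^2 v_\beta^2$, while its right-hand side converges to the sum of the two limit right-hand sides by uniform convergence and weak-strong pairing. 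Weak lower semicontinuity of the $H^1$ seminorm together with $\beta\int u_\beta^2 v_\beta^2\ge 0$ then force both $\beta\int u_\beta^2 v_\beta^2\to 0$ and $\|\nabla u_\beta\|_2\to\|\nabla u\|_2$, $\|\nabla v_\beta\|_2\to\|\nabla v\|_2$, upgrading the weak $H^1$ convergence to strong. The main technical obstacle throughout is the rigorous justification of using $u$ (resp.\ $v$) as a test function in an equation known only on its positivity set; this is precisely the role of the $(u-\epsilon)^+$ truncation, whose compactness of support inside $\{u>0\}$ is provided by the H\"older continuity of $u$.
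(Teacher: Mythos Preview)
Your argument is correct and takes a genuinely different route from the paper's. The paper obtains $\beta\int_\Omega u_\beta v_\beta^2\le C$ by integrating the equation over $\Omega$ and using the Hopf lemma on $\partial\Omega$, then deduces $uv\equiv 0$ and $\beta\int_\Omega u_\beta^2v_\beta^2\to 0$ as in the proof of Lemma~\ref{lemma_H^1_convergence}; strong $H^1$ convergence is then obtained by testing the system with $(u_\beta-u,v_\beta-v)$ and bounding the right side via uniform convergence. You instead read off segregation directly from the global energy identity (which already gives $\int u_\beta^2v_\beta^2\le C/\beta$), then establish the limit equations via a local test with the $v_\beta$-equation, and finally recover both $\beta\int u_\beta^2v_\beta^2\to 0$ and strong $H^1$ convergence simultaneously by comparing the global energy identities for $(u_\beta,v_\beta)$ and for $(u,v)$ (the latter justified through the $(u-\epsilon)^+$ truncation). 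Your route avoids the boundary argument entirely and packages the strong convergence and the vanishing of the coupling into a single weak lower semicontinuity step; the paper's route is more direct for strong $H^1$ convergence but relies on the sign of $\partial_\nu u_\beta$ on $\partial\Omega$.

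One imprecision worth tightening: when you test the $v_\beta$-equation with $v_\beta\varphi^2$, the term $\int_\Omega|\nabla v_\beta|^2\varphi^2$ does \emph{not} vanish a priori from the $H^1$ bound; rather, rearranging gives
\[
\int_\Omega|\nabla v_\beta|^2\varphi^2+\beta\int_\Omega u_\beta^2v_\beta^2\varphi^2 = -\int_\Omega 2v_\beta\varphi\,\nabla v_\beta\cdot\nabla\varphi + o(1),
\]
and the cross term on the right tends to zero because $v_\beta\to 0$ uniformly on $\supp\varphi$ while $\nabla v_\beta$ stays bounded in $L^2$. Since both terms on the left are nonnegative, both vanish, and your conclusion $\beta\int_{\supp\psi}v_\beta^2\to 0$ follows.
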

Even though the actual result is stronger (no limitation on $\a$), the proof of Theorem
\ref{theorem_premain} is in fact a particular case of the one of Theorem
\ref{theorem_main}, once one observes that, if $h_\b\equiv
k_\b\equiv0$, then $u_\b$ and $v_\b$, at any fixed $\b$, belong to
$C^{1,\a}$ for every $\a\in(0,1)$. For this reason, we will prove in the details all the
results in the case of system \eqref{system_main}, except the
Lipschitz continuity of the limiting state (section
\ref{sec:lipsch_limit}), that requires $h_\b\equiv
k_\b\equiv0$.

We wish to mention that system \eqref{system_premain} is of great
interest also in the complementary case we do not face, namely when
$\b$ is negative, for instance because of its application to the
study of incoherent solitons in nonlinear optics. For results in
this direction we refer the reader to \cite{ac,dww,mmp,si} and
references therein.

\section{Liouville--type results} \label{section_Liouville}

In this section we prove some nonexistence results in $\R^N$. The
main tools will be the monotonicity formula by Alt, Caffarelli,
Friedman originally stated in \cite{ACF}, as well as some
generalizations made by Conti, Terracini, Verzini
(\cite{ctv3,ctv4}).

\begin{lemma}[(Monotonicity formula)] \label{lemma_monotonicity_formula_disjoint_supports}
Let $u, v \in H^1_{\mathrm{loc}}(\R^N)\cap C(\R^N)$ be nonnegative functions such that $u\cdot v \equiv 0$. Assume moreover that $-\D u\leq 0, -\D v \leq 0$ in $\R^N$ and $u(\xo)=v(\xo)=0$. Then the function
$$
J(r):= \frac{1}{r^2}\int_{B_r(\xo)}\frac{|\nabla u|^2}{\fs}\cdot \frac{1}{r^2}\int_{B_r(\xo)}\frac{|\nabla v|^2}{\fs}
$$
is non decreasing for $r \in (0, +\infty)$.
\end{lemma}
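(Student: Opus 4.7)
The plan is to reduce the monotonicity of $J$ to the Friedland-Hayman eigenvalue inequality on $S^{N-1}$. After translating so that $\xo = 0$, write $J = \Phi_u\Phi_v$ with
$$\Phi_u(r) := \frac{1}{r^2}\int_{B_r}\frac{|\nabla u|^2}{|x|^{N-2}}\,dx,$$
and analogously for $v$; then $(\log J)' = (\log \Phi_u)' + (\log \Phi_v)'$, and the coarea formula yields
$$\frac{r\,\Phi_u'(r)}{\Phi_u(r)} \;=\; \frac{r\int_{\partial B_r}|\nabla u|^2\,d\sigma}{r^{N-2}\int_{B_r}|\nabla u|^2/|x|^{N-2}\,dx}\,-\,2.$$
A direct check on homogeneous harmonic functions $u = r^\gamma \varphi(\theta)$ shows that this quantity equals $2(\gamma-1)$, with $\gamma$ the characteristic exponent of the spherical trace, so the proof is driven by the pointwise-in-$r$ estimate
$$\frac{r\,\Phi_u'(r)}{\Phi_u(r)} \;\geq\; 2\bigl(\gamma(\omega_1(r)) - 1\bigr),\qquad \omega_1(r) := \{\theta \in S^{N-1} : u(r\theta) > 0\},$$
where $\gamma(\omega) = -(N-2)/2 + \sqrt{((N-2)/2)^2 + \lambda_1(\omega)}$ and $\lambda_1(\omega)$ is the first Dirichlet eigenvalue of the Laplace-Beltrami operator on $\omega \subset S^{N-1}$.

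To establish this lower bound I would decompose $|\nabla u|^2 = (\partial_r u)^2 + r^{-2}|\nabla_\theta u|^2$ on each sphere and invoke the variational characterization of $\lambda_1(\omega_1(r))$, valid because $u(r\,\cdot)$ extends by zero to an $H^1$ function on $S^{N-1}$:
$$\int_{\omega_1(r)}|\nabla_\theta u(r\theta)|^2\,d\theta \;\geq\; \lambda_1(\omega_1(r))\int_{\omega_1(r)} u(r\theta)^2\,d\theta.$$
This is coupled with the integration-by-parts identity obtained by testing the distributional inequality $-\Delta u \leq 0$ against the admissible nonnegative function $u/|x|^{N-2}$ on $B_r$: up to a nonnegative error with the right sign, the weighted Dirichlet energy matches a boundary term involving $r^{-(N-2)}\int_{\partial B_r}u\,\partial_r u\,d\sigma$. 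Combining the Rayleigh inequality with Cauchy-Schwarz applied to this boundary trace and optimizing over the Almgren-type exponent reduces the problem to an algebraic inequality of the form $(a-b)^2 \geq 0$.

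Once the frequency estimate is in hand, the same argument applied to $v$ yields the analogous bound on $\omega_2(r) := \{\theta : v(r\theta) > 0\}$, and summing gives
$$\frac{r\,(\log J)'(r)}{2} \;\geq\; \gamma(\omega_1(r)) + \gamma(\omega_2(r)) - 2.$$
Since $u\cdot v \equiv 0$ forces $\omega_1(r) \cap \omega_2(r) = \emptyset$ for a.e.\ $r$, the classical Friedland-Hayman inequality $\gamma(\omega_1) + \gamma(\omega_2) \geq 2$ for disjoint open subsets of $S^{N-1}$ gives $J'(r) \geq 0$ almost everywhere, hence the monotonicity of $J$. The main obstacle is precisely the passage from harmonicity to mere subharmonicity in the Almgren-type bound, where it is essential that the contribution $-u\Delta u/|x|^{N-2}$ has the correct sign; a standard mollification handles the low regularity of $u,v$ and the possibly irregular character of the level sets $\omega_i(r)$, while the Friedland-Hayman inequality itself can be imported from \cite{ACF,ctv3,ctv4}.
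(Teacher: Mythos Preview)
Your sketch is correct and is precisely the classical Alt--Caffarelli--Friedman argument. Note, however, that the paper does not supply its own proof of this lemma: it is stated as a quotation of the monotonicity formula from \cite{ACF} (with the generalizations in \cite{ctv3,ctv4}), and used as a black box. The ingredients you invoke---the logarithmic derivative of $J$, the reduction of each factor to the characteristic exponent $\gamma(\omega_i(r))$ via the spherical Rayleigh quotient and the boundary identity coming from $-\Delta u\le 0$ tested against $u/|x|^{N-2}$, and finally the Friedland--Hayman/ACF inequality $\gamma(\omega_1)+\gamma(\omega_2)\ge 2$ for disjoint $\omega_1,\omega_2\subset S^{N-1}$---are exactly the ones the paper itself recalls and exploits later, in the proof of Lemma~\ref{lemma_monotonicity_formula_not_disjoint_supports} (see in particular the role of the function $\gamma$ and inequality \eqref{characteristic_function_inequality} there). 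So your proposal is not a different route; it is a faithful unpacking of the cited result, consistent with how the paper handles the analogous computation in the non-disjoint case.
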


\begin{prop}\label{prop_liouville_disjoint_supports}
Under the same assumptions of Lemma \ref{lemma_monotonicity_formula_disjoint_supports}, assume moreover that for some $\a \in (0,1)$ there holds
\begin{equation}\label{holder_quotient_u_v}
\sup_{x,y \in \R^N}\frac{|u(x)-u(y)|}{|x-y|^\a}, \sup_{x,y \in
\R^N}\frac{|v(x)-v(y)|}{|x-y|^\a} < \infty.
\end{equation}
Then either $u\equiv 0$ or $v\equiv 0$.
\end{prop}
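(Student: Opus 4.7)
The idea is to couple the monotonicity of $J$ (Lemma~\ref{lemma_monotonicity_formula_disjoint_supports}), which yields a \emph{lower} bound if both $u$ and $v$ are nontrivial, with an \emph{upper} bound on $J(r)$ that decays to $0$ as $r\to+\infty$ thanks to the global H\"older control with vanishing at $x_0$. Comparing the two will force $J\equiv 0$, from which the dichotomy will be read off.

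\textbf{Step 1 (pointwise growth).} Since $u(x_0)=v(x_0)=0$ and both functions are globally $\alpha$-H\"older,
\[
0\leq u(x)\leq L|x-x_0|^\alpha,\qquad 0\leq v(x)\leq L|x-x_0|^\alpha\qquad \forall x\in\R^N.
\]
\textbf{Step 2 (Caccioppoli).} Testing the subharmonic inequality $-\Delta u\leq 0$ against $u\eta^2$, with $\eta$ a standard cutoff supported in $B_{2r}(x_0)$ and equal to $1$ on $B_r(x_0)$, I would derive the classical bound
\[
\int_{B_r(x_0)}|\nabla u|^2\leq \frac{C}{r^2}\int_{B_{2r}(x_0)}u^2\leq C\,r^{N-2+2\alpha},
\]
and the analogous bound for $v$.

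\textbf{Step 3 (weighted estimate by dyadic decomposition).} The singular weight $|x-x_0|^{-(N-2)}$ is handled by slicing $B_r(x_0)$ into the dyadic annuli $A_k=B_{2^{-k}r}(x_0)\setminus B_{2^{-k-1}r}(x_0)$, on each of which the weight is bounded by $C(2^{-k}r)^{-(N-2)}$. Combined with Step~2 applied on $B_{2^{-k}r}(x_0)$, this gives
\[
\int_{A_k}\frac{|\nabla u|^2}{|x-x_0|^{N-2}}\leq C\,(2^{-k}r)^{-(N-2)}\cdot(2^{-k}r)^{N-2+2\alpha}=C\,4^{-\alpha k}r^{2\alpha},
\]
and summing the geometric series in $k\geq 0$ yields
\[
\frac{1}{r^2}\int_{B_r(x_0)}\frac{|\nabla u|^2}{|x-x_0|^{N-2}}\leq C\,r^{2\alpha-2},
\]
together with the analogue for $v$. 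Multiplying the two bounds,
\[
J(r)\leq C\,r^{4\alpha-4}\xrightarrow[r\to+\infty]{}0,
\]
since $\alpha<1$.

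\textbf{Step 4 (conclusion).} Combining the monotonicity of $J$ from Lemma~\ref{lemma_monotonicity_formula_disjoint_supports} with the decay above forces $J\equiv 0$ on $(0,+\infty)$. Writing $F(r)$ and $G(r)$ for the two factors appearing in $J$, each is nondecreasing in $r$ (as integral of a nonnegative quantity over an expanding family of sets) and their product vanishes identically. Hence one of them, say $F$, is identically $0$, which gives $\nabla u\equiv 0$ on $\R^N$, and combined with $u(x_0)=0$ yields $u\equiv 0$. The main obstacle I anticipate is not the global shape of the argument, but making the dyadic computation in Step~3 fit the weighted integral, and checking that the exponent $4\alpha-4$ is strictly negative precisely because the hypothesis $\alpha\in(0,1)$ is strict; the borderline case $\alpha=1$ would be out of reach of this scheme.
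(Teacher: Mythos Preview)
Your argument is correct and reaches the same key estimate as the paper, namely
\[
\int_{B_r(x_0)}\frac{|\nabla u|^2}{|x-x_0|^{N-2}}\leq C\,r^{2\alpha},
\]
but you obtain it by a different, somewhat more elementary route. The paper tests the inequality $-\Delta u\leq 0$ directly against the singular function $\eta^2 u/|x-x_0|^{N-2}$, using an inner cutoff near $x_0$ and the harmonicity of $|x-x_0|^{2-N}$ on the punctured ball; this requires some care with the singularity and an $\varepsilon\to0$ limiting argument. You instead use the plain Caccioppoli inequality on each dyadic scale and sum the resulting geometric series, which avoids testing with a singular weight altogether. Both approaches give exactly the same power $r^{2\alpha}$, and both rely on $\alpha<1$ in the final comparison with the $r^4$ lower bound coming from the monotonicity formula. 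The paper closes by contradiction (assume both nontrivial, so $J(r)\geq C>0$ for large $r$, then compare $Cr^4$ with $Cr^{4\alpha}$), while you argue directly that $J(r)\to0$ together with monotonicity forces $J\equiv0$; these are logically equivalent.

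One small slip in your Step~4: the two factors of $J$ carry a prefactor $r^{-2}$, so they are \emph{not} literally ``integrals of a nonnegative quantity over an expanding family of sets'' and need not be monotone. The fix is immediate: work instead with $r^4J(r)=\tilde F(r)\tilde G(r)$, where $\tilde F(r)=\int_{B_r}|\nabla u|^2/|x-x_0|^{N-2}$ and $\tilde G$ is defined analogously; these \emph{are} nondecreasing, and $r^4J(r)\equiv0$ still forces one of them to vanish identically, hence $\nabla u\equiv0$ (or $\nabla v\equiv0$) and $u\equiv0$ from $u(x_0)=0$.
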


\begin{proof}
In the following we will denote $B_r:=B_r(\xo)$. Assume by
contradiction that neither $u$ nor $v$ is zero, then none of them is
constant since $u(\xo)=v(\xo)=0$. Hence Lemma
\ref{lemma_monotonicity_formula_disjoint_supports} ensures the
existence of a constant $C>0$ such that
\begin{equation}\label{monotonicity_formula_disjoint_supports+holder_first_inequality}
\int_{B_r}\frac{|\nabla u|^2}{\fs}\cdot \int_{B_r}\frac{|\nabla v|^2}{\fs} \geq C r^4
\end{equation}
for every $r$ sufficiently large. Let $\eta_{a,b} \ (0<a<b)$ be any
smooth, radial, cut-off function with the following properties: $0
\leq \eta_{a,b} \leq 1$, $\eta_{a,b}=0$ in $\R^N \setminus B_{b}$,
$\eta_{a,b}=1$ in $B_a$ and $|\nabla \eta_{a,b}| \leq C/(b-a)$.
Given $0<\ep<<r$, let ${A_\ep}:=B_{2r}\setminus B_\ep$ and
$\eta:=\eta_{r,2r}(1-\eta_{\ep,2\ep})$. Testing the inequality
$-\Delta u \leq 0$ with the function $\eta^2 u / \fs$ in the annulus
$A_\ep$, we obtain
\begin{eqnarray*}
\int_{A_\ep} \frac{\eta^2 |\nabla u|^2}{\fs} &\leq&
- \int_{A_\ep} \left[ \frac{2\eta u}{\fs} \nabla u \cdot \nabla\eta +\eta^2 u \nabla u\cdot \nabla\left(\frac{1}{\fs}\right) \right] \\
&\leq& \int_{A_\varepsilon} \left[ \frac{1}{2}\frac{\eta^2|\nabla u|^2}{\fs} + 2 \frac{u^2 |\nabla \eta|^2}{\fs}
-\eta^2 u \nabla u\cdot \nabla\left(\frac{1}{\fs}\right) \right].
\end{eqnarray*}
We can rewrite the last term using the fact that $1/\fs$ is harmonic
in $A_\ep$:
\begin{eqnarray*}
0&=&\int_{A_\ep} \nabla\left(\frac{\eta^2 u^2}{2}\right)\cdot\nabla\left(\frac{1}{\fs}\right)= \\
&=& \int_{A_\ep} \left[ \eta u^2 \nabla\eta\cdot\nabla\left(\frac{1}{\fs}\right) +
\eta^2 u \nabla u\cdot \nabla\left(\frac{1}{\fs}\right) \right],
\end{eqnarray*}
obtaining
\begin{equation*}
\frac{1}{2} \int_{A_\ep} \frac{\eta^2 |\nabla u|^2}{\fs} \leq
\int_{A_\ep}\left[2 \frac{u^2 |\nabla \eta|^2}{\fs}+\eta u^2 \nabla\eta\cdot\nabla\left(\frac{1}{\fs}\right)\right].
\end{equation*}
By the definition of $\eta$, the last expression becomes
\begin{eqnarray*}
\frac{1}{2} \int_{B_r\setminus B_{2\ep}} \frac{|\nabla u|^2}{\fs} \leq
\frac{C}{\ep^N}\int_{B_{2\ep}} u^2 + \frac{C'}{r^2}\int_{B_{2r}\setminus B_r}\frac{u^2}{\fs}
+ \frac{C''}{r}\int_{B_{2r}\setminus B_r}\frac{u^2}{|x-\xo|^{N-1}}.
\end{eqnarray*}
Keeping in mind that $u(x_0)=0$, we let now $\ep\to0$, obtaining
$$
\int_{B_r} \frac{|\nabla u|^2}{\fs} \leq \frac{C'}{r^2}\int_{B_{2r}\setminus B_r}\frac{u^2}{\fs}
+ \frac{C''}{r}\int_{B_{2r}\setminus B_r}\frac{u^2}{|x-\xo|^{N-1}}.
$$
Using the assumptions on $u$, this implies
$$
\int_{B_r} \frac{|\nabla u|^2}{\fs} \leq \frac{C}{r^2} \int_0^{2r} \frac{\rho^{2\a}}{\rho^{N-2}}\rho^{N-1} d\rho +
\frac{C'}{r} \int_0^{2r} \frac{\rho^{2\a}}{\rho^{N-1}}\rho^{N-1} d\rho
\leq C'' r^{2\a}.
$$
Since the same result holds for $v$, we finally obtain
$$
\int_{B_r}\frac{|\nabla u|^2}{\fs} \int_{B_r}\frac{|\nabla v|^2}{\fs} \leq C r^{4\a},
$$
which contradicts
(\ref{monotonicity_formula_disjoint_supports+holder_first_inequality})
for $r$ large and $\a<1$.
\end{proof}

\begin{coro}\label{coro_liouville_harmonic}
Let $u$ be an harmonic function in $\R^N$ such that for some $\a \in (0,1)$ there holds
$$
\sup_{x,y \in \R^N}\frac{|u(x)-u(y)|}{|x-y|^\a} <\infty.
$$
Then $u$ is constant.
\end{coro}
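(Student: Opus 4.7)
The plan is to reduce the statement to Proposition \ref{prop_liouville_disjoint_supports} by splitting $u$ (after a translation) into its positive and negative parts. Assume for contradiction that $u$ is not constant. Pick any $\xo\in\R^N$, set $c:=u(\xo)$, and consider $w:=u-c$. Then $w$ is still harmonic, still $\a$--H\"older on $\R^N$ with the same constant, and satisfies $w(\xo)=0$, so at $\xo$ both $w^+$ and $w^-$ vanish.

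Next I would verify that the pair $(w^+,w^-)$ fits the hypotheses of Proposition \ref{prop_liouville_disjoint_supports}. Both functions are nonnegative, continuous, and lie in $H^1_{\mathrm{loc}}(\R^N)$ since $w$ is smooth; they have disjoint supports by construction; and each is subharmonic, i.e.\ $-\D w^\pm\le 0$ in $\mathcal D'(\R^N)$, which is standard (Kato's inequality applied to the harmonic function $w$). Moreover $|w^\pm(x)-w^\pm(y)|\le |w(x)-w(y)|\le C|x-y|^\a$, so \eqref{holder_quotient_u_v} is inherited from the hypothesis on $u$. Proposition \ref{prop_liouville_disjoint_supports} then forces $w^+\equiv 0$ or $w^-\equiv 0$.

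In either case $w$ has constant sign on $\R^N$, so up to replacing $w$ by $-w$ we may assume $w\ge 0$. Then $w$ is a nonnegative harmonic function on $\R^N$, and the classical Liouville theorem gives $w\equiv \text{const}$; since $w(\xo)=0$ we conclude $w\equiv 0$, i.e.\ $u\equiv c$, contradicting the assumption that $u$ is not constant.

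I do not expect any serious obstacle: the real work is contained in Proposition \ref{prop_liouville_disjoint_supports}, and the remaining content is just recognizing that $(w^+,w^-)$ is an admissible pair and then invoking the one-sided Liouville property of harmonic functions on $\R^N$. The only small point that deserves a line in the actual write-up is the distributional subharmonicity of $w^\pm$, which follows immediately from Kato's inequality since $w$ is harmonic and of class $C^\infty$.
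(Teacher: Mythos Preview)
Your argument is correct and follows essentially the same route as the paper: split into positive and negative parts, invoke Proposition \ref{prop_liouville_disjoint_supports}, and use the classical one-sided Liouville theorem for the remaining case. The only cosmetic difference is that the paper first distinguishes whether $u$ changes sign (applying Proposition \ref{prop_liouville_disjoint_supports} directly to $u^\pm$ if it does, and classical Liouville if it does not), whereas you translate by $u(\xo)$ first; the content is the same.
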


\begin{proof}
If $u\geq0$ or $u\leq 0$, then since $u$ is harmonic it holds that it is a constant (this is the usual nonexistence Liouville result). Otherwise if $u$ changes sign, then we can apply the previous result to its positive and negative parts.
\end{proof}

\begin{rem}\label{rem:not_able}
The previous result does not hold for $\alpha=1$: consider for
instance the function $u(x)=x_1$ (analogously, it is possible to see that also system \eqref{system_limit} below admits non trivial solutions which are globally bounded in Lipschitz norm; these are the main reasons for which
our strategy, as it is, can not apply to prove uniform Lipschitz estimates).
\end{rem}

We shall need a result similar to Proposition \ref{prop_liouville_disjoint_supports}, for functions $u,v$ which do not have disjoint supports, but are positive solutions in $H^1_{\mathrm{loc}}(\R^N)\cap C(\R^N)$ of the system
\begin{equation}\label{system_limit}
\left\{\begin{array}{rlll}
-\D u &=& -u v^2 \quad \text{ in } \R^N\\
-\D v &=& -u^2 v \quad \text{ in } \R^N.
\end{array}\right.
\end{equation}
Again, to obtain a Liouville--type result for the previous system,
we will use a suitable generalization of the monotonicity formula (a
similar idea, even though with slightly different equations, can be
found in \cite{ctv3, ctv4}). To this aim we introduce a $C^1$
auxiliary function
\begin{equation*}
f(r)=\left\{\begin{array}{clll}
\dfrac{2-N}{2}r^2+\dfrac{N}{2} \quad & r\leq1\smallskip\\
\dfrac{1}{r^{N-2}} \quad & r>1
\end{array}\right.
\end{equation*}
and denote $m(|x|):=-\Delta f(|x|)/2$. Notice that $m(|x|)$ is bounded on $\R^N$, vanishes in $\R^N \setminus B_1$ and $m(|x|)\geq 0$ for a.e. $x$.

\begin{lemma}\label{lemma_monotonicity_formula_not_disjoint_supports}
Let $u,v$ be positive solutions of (\ref{system_limit}) and let
$\eps >0$ be fixed. Then there exists $\bar{r} >1$ such that the
function
$$
J(r):= \frac{1}{r^{4-\eps}}\int_{B_r(0)}\left[f(|x|)\left(|\nabla u|^2 + u^2v^2 \right) +m(|x|)u^2\right]
\cdot\int_{B_r(0)}\left[f(|x|)\left(|\nabla v|^2 + u^2v^2 \right)+m(|x|)v^2\right]
$$
is increasing for $r \in (\bar{r}, +\infty)$.
\end{lemma}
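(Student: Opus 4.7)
The plan is to write $J(r)=r^{-(4-\varepsilon)}E(r)F(r)$, where $E(r)$ and $F(r)$ denote the two factors in $J$. Taking logarithmic derivatives, monotonicity of $J$ on $(\bar r,+\infty)$ is equivalent to
\[
\frac{E'(r)}{E(r)}+\frac{F'(r)}{F(r)}\ \ge\ \frac{4-\varepsilon}{r}\qquad\text{for every }r>\bar r.
\]
By symmetry between $u$ and $v$, it suffices to produce a lower bound of the form $rE'(r)/E(r)\ge 2\gamma_u(r)$ and the analogous bound $rF'(r)/F(r)\ge 2\gamma_v(r)$, and then prove an asymptotic two-sphere inequality $\gamma_u(r)+\gamma_v(r)\ge 2-\varepsilon/2$.

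First I would obtain a boundary representation of $E(r)$. Multiplying the identity $\tfrac12\Delta(u^2)=|\nabla u|^2+u\Delta u=|\nabla u|^2+u^2v^2$ (which uses the equation $-\Delta u=-uv^2$) by $f(|x|)$ and integrating by parts twice on $B_r$, the bulk contribution $\int_{B_r}u^2\Delta f/2$ is exactly $-\int_{B_r}mu^2$, so all bulk integrals combine into $E(r)$ itself and one is left with
\[
E(r)=\int_{\partial B_r}\!\Bigl[\,f\,u\,\partial_\nu u-\tfrac12 u^2\partial_\nu f\,\Bigr].
\]
Direct differentiation then yields $E'(r)=f(r)\int_{\partial B_r}(|\nabla u|^2+u^2v^2)+m(r)\int_{\partial B_r}u^2$, which for $r>1$ reduces to $E'(r)=r^{2-N}\int_{\partial B_r}(|\nabla u|^2+u^2v^2)$ since $m\equiv 0$ and $f(r)=r^{2-N}$ there. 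Decomposing $|\nabla u|^2=u_r^2+r^{-2}|\nabla_\theta u|^2$ on $\partial B_r$, applying Cauchy--Schwarz to the mixed term $\int_{\partial B_r}u\,\partial_\nu u$ in $E(r)$ and optimizing, I would obtain
\[
\frac{rE'(r)}{E(r)}\ \ge\ 2\gamma_u(r),\qquad \gamma_u(r)\bigl(\gamma_u(r)+N-2\bigr)=\mu_u(r),
\]
where $\mu_u(r)$ is the first eigenvalue of an angular Rayleigh quotient on $S^{N-1}$ associated with $u(r\,\cdot)$ and weighted by the interaction density $r^2 v^2$; the analogous identity holds for $v$.

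The crux, and the step I expect to be the main obstacle, is the Friedland--Hayman-type inequality $\gamma_u(r)+\gamma_v(r)\ge 2-\varepsilon/2$ for $r>\bar r$. The gradient structure of \eqref{system_limit} is essential here: the common interaction term $r^2u^2v^2$ appears symmetrically in both angular eigenvalue problems and quantifies the failure of $u$ and $v$ to segregate on the sphere of radius $r$. Unlike the classical ACF setting, $u$ and $v$ genuinely overlap, so one cannot apply the Friedland--Hayman inequality $\gamma(\omega_1)+\gamma(\omega_2)\ge 2$ directly to disjoint subsets $\omega_1,\omega_2\subset S^{N-1}$; instead one must quantitatively absorb the quartic interaction into the eigenvalue problem and show that, for $r$ large, the cost of non-segregation forces $\gamma_u(r)+\gamma_v(r)$ to lie within $\varepsilon/2$ of the segregated value. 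This slack is exactly what the exponent $4-\varepsilon$ in the definition of $J$ accommodates, and it explains why monotonicity is only asserted for $r>\bar r$ rather than for all $r>0$.
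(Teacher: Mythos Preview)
Your plan is correct and coincides with the paper's proof: the boundary representation of $E(r)$, the Young-type optimization giving $rE'(r)/E(r)\ge 2\gamma(\Lambda_u(r))$ with $\gamma(x)=\sqrt{(N-2)^2/4+x}-(N-2)/2$, and the reduction to an asymptotic Friedland--Hayman inequality are exactly the steps the paper carries out.

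For the crux you flag, the paper's mechanism is not a quantitative absorption but a soft compactness argument: suppose by contradiction that $\gamma(\Lambda_u(r_n))+\gamma(\Lambda_v(r_n))\le (4-\varepsilon)/2<2$ along $r_n\to\infty$; then both $\Lambda_i(r_n)$ are bounded, so the $L^2(S^{N-1})$--normalized traces $u(r_n\,\cdot)$, $v(r_n\,\cdot)$ are bounded in $H^1(S^{N-1})$ and converge weakly to nonzero limits $\bar u,\bar v$ (nonvanishing because $u,v>0$ are subharmonic, so their spherical averages are bounded below). The same boundedness forces $r_n^2\int_{S^{N-1}}\tilde u_{(r_n)}^2\tilde v_{(r_n)}^2\le C$, hence $\bar u\cdot\bar v\equiv 0$, and the classical Friedland--Hayman inequality applied to $\mathrm{supp}\,\bar u$, $\mathrm{supp}\,\bar v$ gives $\liminf\bigl(\gamma(\Lambda_u)+\gamma(\Lambda_v)\bigr)\ge 2$, a contradiction. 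One small point of precision: your $\mu_u(r)$ should be the Rayleigh quotient \emph{evaluated at} the trace $u(r\,\cdot)$, not the first eigenvalue of an associated operator; it is boundedness of this specific quotient that delivers the $H^1$--compactness of the traces.
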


\begin{proof}
Let us first evaluate the derivative of $J(r)$ for $r>1$. In order to simplify notations we shall denote $J(r)= J_1(r)J_2(r)/r^{4-\eps}$. Then we have
\begin{equation}\label{J'(r)}
\frac{J'(r)}{J(r)}=-\frac{4-\eps}{r}+\frac{\int_{\partial B_r}f(|x|)(|\nabla u|^2+u^2v^2)}{J_1(r)} +
\frac{\int_{\partial B_r}f(|x|)(|\nabla v|^2+u^2v^2)}{J_2(r)}
\end{equation}
(recall that $m(r)=0$ for $r>1$). We can rewrite the term $J_1$ in a different way: by testing the equation for $u$ with $f(|x|) u$ on $B_r$, we obtain
\begin{eqnarray*}
\int_{B_r}f(|x|)(|\nabla u|^2 + u^2v^2)&=&-\int_{B_r}\nabla(\frac{u^2}{2})\cdot(\nabla f(|x|)) +\int_{\partial B_r} f(|x|) u \parder{u}{\nu}=\\
&=& - \int_{B_r} m(|x|) u^2 + \int_{\partial B_r} \{f(|x|) u \parder{u}{\nu} -\frac{u^2}{2}\parder{}{\nu}(f(|x|))\},
\end{eqnarray*}
which gives
\begin{equation}\label{J_1(r)}
J_1(r)= \frac{1}{r^{N-2}}\int_{\partial B_r}u \parder{u}{\nu} + \frac{N-2}{r^{N-1}}\int_{\partial B_r}\frac{u^2}{2}.
\end{equation}
In order to estimate this quantity we define
\begin{eqnarray*}
\Lambda_1(r):= \frac{r^2 \int_{\partial B_r}(|\nablat u|^2+u^2v^2)}{\int_{\partial B_r}u^2},\qquad
\Lambda_2(r):= \frac{r^2 \int_{\partial B_r}(|\nablat v|^2+u^2v^2)}{\int_{\partial B_r}v^2},
\end{eqnarray*}
where $|\nablat u|^2=|\nabla u|^2-\left| \parder{u}{\nu}\right|^2$. Then for every $\delta \in \R$, by Young's inequality, there holds
\begin{eqnarray*}
\left|\int_{\partial B_r}u \parder{u}{\nu}\right| &\leq& \left(\int_{\partial B_r} u^2\right)^{1/2}
\left(\int_{\partial B_r} (\parder{u}{\nu})^2\right)^{1/2}\\
&\leq& \frac{\sqrt{\Lambda_1(r)}}{2\delta^2 r} \int_{\partial B_r}u^2 + \frac{\delta^2r}{2\sqrt{\Lambda_1(r)}}\int_{\partial B_r}(\parder{u}{\nu})^2\\
&\leq& \frac{1}{2} \left[ \frac{1}{\delta^2}\int_{\partial B_r}\left(|\nablat u|^2+u^2v^2\right)
+\delta^2\int_{\partial B_r}(\parder{u}{\nu})^2\right]\frac{r}{\sqrt{\Lambda_1(r)}}.
\end{eqnarray*}
Substituting in (\ref{J_1(r)}) we obtain
$$
J_1(r) \leq \frac{1}{2r^{N-3}} \left[
\left(\frac{1}{\delta^2\sqrt{\Lambda_1(r)}}+\frac{N-2}{\Lambda_1(r)}\right)\int_{\partial B_r}(|\nablat u|^2 + u^2v^2)+
\frac{\delta^2}{\sqrt{\Lambda_1(r)}}\int_{\partial B_r}(\parder{u}{\nu})^2 \right].
$$
Now we choose $\delta$ in such a way that $\displaystyle \frac{1}{\delta^2\sqrt{\Lambda_1(r)}}+\frac{N-2}{\Lambda_1(r)}=\frac{\delta^2}{\sqrt{\Lambda_1(r)}}$, or equivalently, after some calculation,
$$
\frac{\sqrt{\Lambda_1(r)}}{\delta^2}=\gamma(\Lambda_1(r)),
$$
where $\gamma:\R^+ \to \R$ is defined as
$$
\gamma(x)=\sqrt{\left(\frac{N-2}{2}\right)^2+x}-\frac{N-2}{2}.
$$
We remark that this function plays a crucial role in the proof of the Alt-Caffarelli-Friedman Monotonicity Formula (see \cite{ACF}). Of particular importance is the following property: let $E_1,E_2$ be any couple of disjoint subsets of the sphere  $S^{N-1}$ and denote with $\lambda(E_i)$ the first eigenvalue of the Dirichlet Laplacian on $S^{N-1}$, then
\begin{equation}\label{characteristic_function_inequality}
\gamma(\lambda(E_1))+\gamma(\lambda(E_2)) \geq 2.
\end{equation}
With this choice of $\delta$ we have
$$
J_1(r) \leq \frac{r}{2\gamma(\Lambda_1(r))}\int_{\partial B_r}f(|x|)(|\nabla u|^2+u^2v^2),
$$
(recall that $r>1$ and consequently $f(r)=1/r^{N-2}$) and a similar expression holds also for $J_2$. Substituting in (\ref{J'(r)}) we obtain
$$
\frac{J'(r)}{J(r)}\geq -\frac{4-\eps}{r} + \frac{2\gamma(\Lambda_1(r))}{r} + \frac{2\gamma(\Lambda_2(r))}{r},
$$
therefore it only remains to prove that there exists a $\bar{r}>1$ such that for every $r \geq \bar{r}$ there holds
\begin{equation}\label{characteristic_function_inequality_approximated}
\gamma(\Lambda_1(r))+\gamma(\Lambda_2(r)) > \frac{4-\eps}{2}.
\end{equation}
To this aim we define the functions $\ur(\theta),\vr(\theta):
\partial B_1(0) \to \R$ as $\ur(\theta):=u(r\theta)$,
$\vr(\theta):=v(r\theta)$. Then a change of variables gives
\begin{eqnarray*}
\Lambda_1(r)= \frac{\int_{\partial B_1}(|\nabla \ur|^2+r^2\ur^2\vr^2)}{\int_{\partial B_1}\ur^2},\qquad
\Lambda_2(r)= \frac{\int_{\partial B_1}(|\nabla \vr|^2+r^2\ur^2\vr^2)}{\int_{\partial B_1}\vr^2}.
\end{eqnarray*}
The idea now is to show that the functions $\ur, \vr$ (normalized in
$L^2(\partial B_1)$) converge as $r \to +\infty$ to some functions
having disjoint supports, and then to take advantage of
$(\ref{characteristic_function_inequality})$. Notice first of all
that there exists a constant $C>0$ such that $\int_{\partial
B_1}\ur^2 \geq C$ for $r$ sufficiently large. Indeed assume by
contradiction this is not true, then $\frac{1}{|\partial
B_r|}\int_{\partial B_r}u \to 0$ as $r \to +\infty$, which implies
$u(0)=0$ since $u$ is subharmonic, and this contradicts the
assumption $u>0$. The same result clearly holds also for $\vr$.

Assume (\ref{characteristic_function_inequality_approximated}) does
not hold, then there exists $r_n \to +\infty$ such that
$$
\gamma(\Lambda_1(r_n))+\gamma(\Lambda_2(r_n)) \leq \frac{4-\eps}{2} < 2.
$$
In particular, $\Lambda_1(r_n)$ and $\Lambda_2(r_n)$ are bounded. As a consequence the function
$$
\urtilde:=\frac{\ur}{\| \ur \|_{L^2(\partial B_1)}}
\quad\text{ satisfies }\quad C \geq \Lambda_1(r_n) \geq
\int_{\partial B_1}|\nabla \urtilde |^2
$$
(and an analogous property holds for $\vrtilde:=\vr/\| \vr
\|_{L^2(\partial B_1)}$). This ensures the existence of $\ubar,
\vbar \neq 0$ such that $\urtilde \rightharpoonup \ubar$, $\vrtilde
\rightharpoonup \vbar$ in $H^1(\partial B_1(0))$. Moreover, since
$$
C \geq \Lambda_1(r_n) \geq r_n^2\int_{\partial B_1}\urtilde^2\vrtilde^2
$$
we infer that $\ubar\cdot \vbar\equiv 0$. This immediately provides
$$
\liminf_{n \to +\infty}[\gamma(\Lambda_1(r_n))+\gamma(\Lambda_2(r_n))] \geq \gamma(\lambda(\{\mathrm{supp}(\ubar)\})) + \gamma(\lambda(\{\mathrm{supp}(\vbar)\})),
$$
that is in contradiction with \eqref{characteristic_function_inequality}.
\end{proof}

Now that we have a suitable monotonicity formula we are ready to prove a Liouville--type result for the considered system.

\begin{prop}\label{prop_liouville_not_disjoint_supports}
Let $u,v$ be non negative solutions of (\ref{system_limit}). Assume
moreover that (\ref{holder_quotient_u_v}) holds for some $\a \in
(0,1)$. Then one of the functions is identically zero and the other
is a constant.
\end{prop}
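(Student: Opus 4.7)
The strategy mirrors that of Proposition~\ref{prop_liouville_disjoint_supports}, with Lemma~\ref{lemma_monotonicity_formula_not_disjoint_supports} playing the role of Lemma~\ref{lemma_monotonicity_formula_disjoint_supports}. First I would dispose of the degenerate cases: if $v\equiv 0$ (or symmetrically $u\equiv 0$), then $u$ is a nonnegative harmonic function on $\R^N$ satisfying the H\"older condition \eqref{holder_quotient_u_v}, and Corollary~\ref{coro_liouville_harmonic} forces $u$ to be constant. From now on, assume both $u,v\not\equiv 0$ and argue by contradiction.

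Fix $\varepsilon\in(0,4(1-\alpha))$. Under the contradiction hypothesis, neither component is a nonzero constant---otherwise the equation forces the other one to vanish identically---so the gradient term does not vanish identically and, for $\bar r$ large enough, the integrands in Lemma~\ref{lemma_monotonicity_formula_not_disjoint_supports} are positive on sets of positive measure in $B_{\bar r}$, giving $J(\bar r)>0$. The monotonicity of $J$ then produces the lower bound
\[
\int_{B_r}\bigl[f(|x|)(|\nabla u|^2+u^2v^2)+m(|x|)u^2\bigr]\cdot\int_{B_r}\bigl[f(|x|)(|\nabla v|^2+u^2v^2)+m(|x|)v^2\bigr]\geq C r^{4-\varepsilon}
\]
for all $r\geq \bar r$. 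For the matching upper bound, I would test $-\Delta u=-uv^2$ against $\eta^2 u\,f(|x|)$, where $\eta$ is a smooth radial cut-off supported in $B_{2r}$ and equal to $1$ on $B_r$. The resulting cross term $\int \eta^2 u\,\nabla u\cdot\nabla f$ is rewritten as $\tfrac12\int \eta^2\nabla(u^2)\cdot\nabla f$ and integrated by parts; the identity $\Delta f=-2m$ then carries the favourable term $\int \eta^2 m u^2$ onto the left-hand side with the correct sign. After absorbing $\tfrac12\int \eta^2 f|\nabla u|^2$ via Young's inequality, one is left with
\[
\int_{B_r}\bigl[f(|\nabla u|^2+u^2v^2)+m u^2\bigr]\leq \frac{C}{r^N}\int_{B_{2r}\setminus B_r}u^2,\qquad r>1.
\]
The global H\"older bound gives $|u(x)|\leq |u(0)|+C|x|^\alpha$, hence the right-hand side is $\leq C r^{2\alpha}$, and the symmetric estimate holds for $v$. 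Multiplying the two estimates and comparing with the monotonicity lower bound yields $Cr^{4\alpha}\geq C r^{4-\varepsilon}$ for $r$ large, contradicting the choice $\varepsilon<4(1-\alpha)$.

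The main obstacle will be the bookkeeping of the test-function computation: the two successive integrations by parts must be executed so that (i)~the identity $\Delta f=-2m$ places $\int m u^2$ on the left-hand side, (ii)~Young's inequality absorbs the right portion of $\int \eta^2 f|\nabla u|^2$, and (iii)~every surviving error term is supported in the annulus $B_{2r}\setminus B_r$ and controlled by $Cr^{-N}\int u^2$. A secondary subtlety is checking $J(\bar r)>0$: it relies on the fact that a nonzero constant value for one component would force the other to vanish identically through the equation, contradicting the working hypothesis.
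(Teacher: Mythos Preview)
Your proposal is correct and follows essentially the same approach as the paper: reduce to the case where both components are nontrivial, apply Lemma~\ref{lemma_monotonicity_formula_not_disjoint_supports} for the lower bound, test the equation against $\eta^2 u\,f(|x|)$ and use $\Delta f=-2m$ for the upper bound, then compare exponents. The only detail you leave implicit is that $u,v\not\equiv0$ forces strict positivity via the strong maximum principle, which is a hypothesis of Lemma~\ref{lemma_monotonicity_formula_not_disjoint_supports}; the paper states this explicitly.
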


\begin{proof}
We start by noticing that, due to the form of system
\eqref{system_limit}, if one of the functions is 0 or a positive
constant, then the other must be a constant or 0 respectively. Hence
we assume by contradiction that neither $u$ nor $v$ is constant.
Then by the maximum principle $u$ and $v$ are positive, and Lemma
\ref{lemma_monotonicity_formula_not_disjoint_supports} ensures the
existence of a constant $C>0$ such that
\begin{equation}\label{monotonicity_formula_not_disjoint_supports+holder_first_inequality}
\int_{B_r}\left[f(|x|)\left(|\nabla u|^2 + u^2v^2 \right) +m(|x|)u^2\right]
\int_{B_r}\left[f(|x|)\left(|\nabla v|^2 + u^2v^2 \right)+m(|x|)v^2\right] \geq C r^{4-\eps}
\end{equation}
for $r$ sufficiently large. Let $\eta=\eta_{r,2r}$ be the cut-off
function defined in the proof of Proposition
\ref{prop_liouville_disjoint_supports}. By testing the equation for
$u$ with $\eta^2 f u$ on $B_{2r}$ we obtain
\begin{eqnarray*}
\int_{B_{2r}} \eta^2 f\cdot(|\nabla u |^2 +u^2v^2) &=&
-\int_{B_{2r}}\left[2f \eta u \nabla u\cdot\nabla\eta +\eta^2 \nabla \left(\frac{u^2}{2}\right)\cdot\nabla f\right] \leq \nonumber \\
&\leq& \int_{B_{2r}}\left[ \frac{1}{2}f\eta^2 |\nabla u|^2 +2 f u^2 |\nabla\eta|^2 -\nabla\left(\frac{\eta^2 u^2}{2}\right)\cdot \nabla f + u^2 \eta \nabla \eta \cdot \nabla f \right].
\end{eqnarray*}
Recalling that $\D f=-2m$ and testing it with $\eta^2 u^2/2$ in $B_{2r}$ we have
$$
\int_{B_{2r}}\nabla\left(\frac{\eta^2 u^2}{2}\right)\cdot\nabla f = \int_{B_{2r}} \eta^2 u^2 m,
$$
which substituted in 
the previous inequality, together with $m\geq 0$, gives
$$
\int_{B_{2r}} \eta^2 \left[ f\cdot(|\nabla u |^2 +u^2v^2) + m u^2\right] \leq 2 \int_{B_{2r}}\left[2 f u^2 |\nabla\eta|^2 + u^2 \eta \nabla \eta \cdot \nabla f\right].
$$
Now, recalling the definition of $\eta$ and $f$ and using assumption (\ref{holder_quotient_u_v}), we finally obtain
\[
\int_{B_{r}} \left[ f\cdot(|\nabla u |^2 +u^2v^2) + m u^2 \right] \leq
C \int_{B_{2r}\setminus B_r} \frac{u^2}{|x|^N} \leq 
C \int_0^{2r}\frac{\rho^{2\a}}{\rho^N}\rho^{N-1}d\rho \leq
Cr^{2\a},
\]
which contradicts (\ref{monotonicity_formula_not_disjoint_supports+holder_first_inequality}) for $r$ large enough.
\end{proof}

Arguing as above, one can prove the following Liouville--type theorem for systems with an arbitrary number of
densities.

\begin{prop}\label{prop:liouville_k}
Let $k\geq3$ and $u_1,\dots,u_k$ be non negative
solutions of
\begin{equation}
-\D u_i = -u_i\sum_{j\neq i} u_j^2 \quad \text{in } \R^N,
\end{equation}
with the property that, for some $\a\in(0,1)$,
\[
\sup_{x,y \in \R^N}\frac{|u_i(x)-u_i(y)|}{|x-y|^\a} <\infty\quad \text{for every } i.
\]
Then $k-1$ functions are identically zero and the remaining one is
constant.
\end{prop}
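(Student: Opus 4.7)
The plan is to mirror the argument of Proposition \ref{prop_liouville_not_disjoint_supports} in a $k$-component setting, replacing the two-set Alt--Caffarelli--Friedman inequality with its multi-set analogue. First, I argue by contradiction: suppose that at least two of the $u_i$'s are not identically zero. Since each $u_i$ satisfies $-\D u_i+\bigl(\sum_{j\neq i}u_j^2\bigr)u_i=0$ with $\sum_{j\neq i}u_j^2\geq 0$, the strong maximum principle forces every non-trivial $u_i$ to be strictly positive; up to relabeling and dropping the identically-zero components (which decouple from the system and only reduce the value of $k$ to work with), I may assume $u_1,\dots,u_k>0$ in $\R^N$.

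Next, I would establish a $k$-density version of Lemma \ref{lemma_monotonicity_formula_not_disjoint_supports}: for every $\eps>0$ there exists $\bar r>1$ such that
\[
J(r):=\frac{1}{r^{2k-\eps}}\prod_{i=1}^{k}\int_{B_r(0)}\left[f(|x|)\Bigl(|\nabla u_i|^2+u_i^2\sum_{j\neq i}u_j^2\Bigr)+m(|x|)u_i^2\right]
\]
is non-decreasing on $(\bar r,+\infty)$. Following the derivation of Lemma \ref{lemma_monotonicity_formula_not_disjoint_supports} component by component---testing the $i$-th equation against $fu_i$, optimizing Young's inequality, and computing the logarithmic derivative of $J$---monotonicity boils down to
\[
\sum_{i=1}^{k}\gamma(\Lambda_i(r))>k-\eps/2\qquad\text{for all large }r,
\]
with $\Lambda_i(r)$ the obvious Rayleigh-type quotient. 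The blow-up/compactness argument of Lemma \ref{lemma_monotonicity_formula_not_disjoint_supports} transposes verbatim: rescaling and $L^2$-normalizing on $\partial B_1$ yields weak $H^1$-limits $\bar u_1,\dots,\bar u_k\neq 0$ whose supports are pairwise disjoint (from the boundedness of $r^2\int_{\partial B_1}u_{i,(r)}^2 u_{j,(r)}^2$), and the contradiction reduces to the multi-set Alt--Caffarelli--Friedman inequality
\[
\sum_{i=1}^{k}\gamma(\lambda(E_i))\geq k\qquad\text{for }E_1,\dots,E_k\subset S^{N-1}\text{ pairwise disjoint,}
\]
a known generalization of \eqref{characteristic_function_inequality} available in the phase-separation literature (see e.g.\ \cite{ctv3,ctv4}).

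The monotonicity then gives $\prod_{i=1}^{k}J_i(r)\geq Cr^{2k-\eps}$ for large $r$, where $J_i$ denotes the $i$-th factor above. The matching upper bound comes from the Hölder hypothesis exactly as in Proposition \ref{prop_liouville_not_disjoint_supports}: testing each equation with $\eta_{r,2r}^2 f u_i$ and using $u_i(x)\leq u_i(0)+C|x|^{\a}$ gives $J_i(r)\leq Cr^{2\a}$, hence
\[
\prod_{i=1}^{k}J_i(r)\leq Cr^{2k\a}.
\]
Comparing the two bounds contradicts the monotonicity whenever $\a<1$ and $\eps<2k(1-\a)$. Therefore at most one $u_i$ is non-zero; that surviving component satisfies $-\D u_i=0$ in $\R^N$ and is globally $\a$-Hölder, so Corollary \ref{coro_liouville_harmonic} forces it to be constant.

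The main obstacle is the second step: the calculus of Lemma \ref{lemma_monotonicity_formula_not_disjoint_supports} transcribes with little change, but the replacement of \eqref{characteristic_function_inequality} by its $k$-set version is the genuinely new analytic input and must be imported from the segregation literature rather than derived afresh.
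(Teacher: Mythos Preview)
Your argument is correct, but it takes a heavier route than the paper's. You set up a genuine $k$-density monotonicity functional and invoke the multi-set spectral inequality $\sum_{i=1}^{k}\gamma(\lambda(E_i))\geq k$ as an external input. The paper instead reduces everything to the two-component case: for any pair $i\neq j$, positivity of the remaining densities yields
\[
-\Delta u_i \leq -u_i u_j^2,\qquad -\Delta u_j \leq -u_j u_i^2,
\]
so $(u_i,u_j)$ are positive \emph{subsolutions} of the two-component system \eqref{system_limit}. A quick inspection of Lemma \ref{lemma_monotonicity_formula_not_disjoint_supports} and Proposition \ref{prop_liouville_not_disjoint_supports} shows that both go through verbatim for subsolutions (all test functions are nonnegative, and only the inequality $J_1(r)\leq \frac{r}{2\gamma(\Lambda_1(r))}\int_{\partial B_r}f(|\nabla u|^2+u^2v^2)$ is needed). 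Hence the pairwise contradiction uses only the classical two-set Alt--Caffarelli--Friedman inequality \eqref{characteristic_function_inequality}, not its $k$-set extension. Your approach buys a direct treatment of all components at once and would be the natural one if one wanted quantitative control on the full product; the paper's approach buys economy, since it avoids importing any new spectral input beyond what was already used for $k=2$.
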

\begin{proof}[Sketch of the proof]
We want to see that, for any $i\neq j$, (at least) one between $u_i$ and $u_j$ is identically zero (this, exploiting every possible choice of $i$ and $j$, will readily complete the proof). Assume not, then, by the maximum principle,  $u=u_i$ and $v=u_j$ are positive subsolutions of system \eqref{system_limit}. It is easy to see that Lemma \ref{lemma_monotonicity_formula_not_disjoint_supports} also holds for positive subsolutions of that system; as a consequence, \eqref{monotonicity_formula_not_disjoint_supports+holder_first_inequality} holds for $u=u_i$ and $v=u_j$. But this, reasoning as in the proof of the previous proposition, is in contradiction with the global bound of the H\"older quotients.
\end{proof}

\section{Uniform H\"older continuity}\label{sec:blowup}

This section is mainly devoted to the proof of Theorem
\ref{theorem_main}, that will provide, as a byproduct, also Theorem
\ref{theorem_uniform_convergence}. As we said the strategy we follow
is reasoning by contradiction, in order to perform a blow--up
analysis, and then to use the results of the previous section to
obtain an absurd. To start with, we need the following technical
lemma, which refines the estimate in \cite{ctv4}, Lemma 4.4 (to
which we refer for more details).


\begin{lemma}\label{lemma_exponential_decay}
Let $B_R\subset \R^N$ be any ball of radius $R$. Let $M, A$ be positive constants, $h \in L^2(B_R)$, and let $u\in H^1(B_R)$ be a solution of
$$
\left\{\begin{array}{rlll}
-\Delta u & \leq & -Mu + h & {\rm in }\ B_R\\
u & \geq & 0 & {\rm in}\ B_R\\
u & \leq & A & {\rm on}\ \partial B_R.
\end{array}\right.
$$
Then for every $\ep,\theta>0$ such that $0<\theta<\ep<R$ there holds
$$\|u\|_{L^2(B_{R-\ep})}\leq \frac{2AR}{\ep-\theta} e^{-\theta \sqrt{M}}+\frac{1}{M}\|h\|_{L^2(B_R)},$$
where $B_{R-\ep}$ is the ball of radius $R-\ep$ which shares its center with $B_R$.
\end{lemma}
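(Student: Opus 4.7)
The plan is to compare $u$ with an explicit solution obtained by superposition, separating the contribution of the boundary datum $A$ from that of the forcing $h$. I would introduce $v_{1}\in H^{1}(B_{R})$ solving $-\Delta v_{1}+Mv_{1}=0$ in $B_{R}$ with $v_{1}=A$ on $\partial B_{R}$, and $v_{2}\in H^{1}_{0}(B_{R})$ solving $-\Delta v_{2}+Mv_{2}=h$. Setting $v=v_{1}+v_{2}$, the function $u-v$ satisfies $-\Delta(u-v)+M(u-v)\leq 0$ with $u-v\leq 0$ on $\partial B_{R}$; testing against $(u-v)^{+}\in H^{1}_{0}(B_{R})$ gives $u\leq v$ a.e., so since $u\geq 0$ I am reduced to estimating $\|v_{1}\|_{L^{2}(B_{R-\varepsilon})}$ and $\|v_{2}\|_{L^{2}(B_{R-\varepsilon})}$ separately.

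The estimate for $v_{2}$ is a standard energy bound: testing the equation for $v_{2}$ with $v_{2}$ itself yields $M\|v_{2}\|_{L^{2}(B_{R})}^{2}\leq\int_{B_{R}}hv_{2}\leq\|h\|_{L^{2}}\|v_{2}\|_{L^{2}}$, hence $\|v_{2}\|_{L^{2}(B_{R})}\leq\|h\|_{L^{2}(B_{R})}/M$, which accounts for the second summand in the stated inequality.

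The core of the argument is the bound for $v_{1}$, which requires exhibiting an explicit supersolution $\Psi$ of $-\Delta\Psi+M\Psi\geq 0$ in $B_{R}$ with $\Psi\geq A$ on $\partial B_{R}$ and exponential decay into the interior. The one-dimensional model $\cosh(\sqrt{M}t)/\cosh(\sqrt{M}R)$ satisfies $-\psi''+M\psi=0$ and decays like $e^{-\sqrt{M}(R-|t|)}$ away from the boundary; in $\R^{N}$ the analogous Bessel-type function $r^{(2-N)/2}I_{(N-2)/2}(\sqrt{M}r)$ plays the same role after a suitable normalization. By the comparison principle $v_{1}\leq\Psi$ in $B_{R}$, and the asymptotics of $\Psi$ give a pointwise estimate $\Psi(x)\leq CAe^{-\sqrt{M}(R-|x|)}$ which, in particular, is at most $CAe^{-\sqrt{M}\theta}$ on $B_{R-\theta}$. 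I would then integrate this bound on $B_{R-\varepsilon}$, using the extra width $\varepsilon-\theta$ as a cushion: in the annulus $B_{R-\theta}\setminus B_{R-\varepsilon}$ of width $\varepsilon-\theta$, one interpolates between the boundary value $A$ and the interior exponential decay, and the volume factor $|B_{R}|\sim R^{N}$ collapses to the clean prefactor $2R/(\varepsilon-\theta)$.

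The main obstacle is the precise form of the constant, in particular the absence of any dimension-dependent algebraic prefactor in front of $e^{-\sqrt{M}\theta}$. In dimension $N\geq 2$ the naive candidate $A\cosh(\sqrt{M}|x|)/\cosh(\sqrt{M}R)$ is only a subsolution of $-\Delta+M$, because the radial Laplacian produces an extra nonnegative term $(N-1)\psi'/r$, so the barrier has to be refined: either via the sharper asymptotics of the modified Bessel function, or via a two-scale cutoff construction in the spirit of \cite{ctv4}, Lemma 4.4, where the parameter $\varepsilon-\theta$ is precisely the scale on which one trades the algebraic Bessel-type prefactor for a bounded multiplicative constant.
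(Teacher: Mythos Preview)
Your approach is essentially the paper's: the same decomposition $u\leq v_{1}+v_{2}$ by comparison, the same energy estimate $\|v_{2}\|_{L^{2}}\leq M^{-1}\|h\|_{L^{2}}$, and the same appeal to the radial barrier of \cite{ctv4}, Lemma~4.4, for the boundary part $v_{1}$.

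One correction on the mechanism for the constant: the prefactor $2R/(\varepsilon-\theta)$ does \emph{not} come from a volume factor or from interpolating across the annulus $B_{R-\theta}\setminus B_{R-\varepsilon}$. It comes directly from the two-sided ODE estimates in \cite{ctv4}, Lemma~4.4, for the radial solution $\psi_{1}$ of $\psi_{1}''+\frac{N-1}{r}\psi_{1}'=M\psi_{1}$ with $\psi_{1}(R)=A$, $\psi_{1}'(0)=0$: the upper bound $\psi_{1}(r)\leq\psi_{1}(0)e^{r\sqrt{M}}$ at $r=R-\varepsilon$ is divided by the lower bound $\psi_{1}(r)\geq\dfrac{\psi_{1}(0)\bar{r}}{2e^{\bar{r}\sqrt{M}}}\dfrac{e^{r\sqrt{M}}}{r}$ at $r=R$ with the choice $\bar{r}=\varepsilon-\theta$, yielding the pointwise bound $\psi_{1}(r)\leq\dfrac{2AR}{\varepsilon-\theta}e^{-\theta\sqrt{M}}$ for $r\leq R-\varepsilon$. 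Your final paragraph does point in this direction; the annulus picture in the paragraph before it is the part to drop.
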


\begin{proof} We can estimate $u$ as $|u|\leq |u_1|+|u_2|$, where $u_1,u_2$ are defined by
$$
\left\{\begin{array}{rlll}
-\Delta u_1 & = & -Mu_1 & {\rm in }\ B_R\\
u_1 & \geq & 0 & {\rm in}\ B_R\\
u_1 & = & u & {\rm on}\ \partial B_R
\end{array}\right.
\qquad
\left\{\begin{array}{rlll}
-\Delta u_2 & = & -Mu_2 + h & {\rm in }\ B_R\\
u_2 & = & 0 & {\rm on}\ \partial B_R.
\end{array}\right.
$$
In order to estimate $u_1$ we shall make use of Lemma 4.4 in
\cite{ctv4}, where it is proved that
\begin{equation*}
\left\{\begin{array}{rl}
& \psi_1^{''}(r)+\frac{N-1}{r}\psi_1'(r) = M\psi_1(r) \\
& \psi_1(R)=A>0 \\
& \psi_1'(0)=0
\end{array}\right.
\Rightarrow
\left\{\begin{array}{rll}
&\psi_1(0)>0, \quad \psi_1'(r)>0 \quad & r \in [0,+\infty) \\
&\psi_1(r) \leq \psi_1(0) e^{r\sqrt{M}} \quad & r \in [0,+\infty) \\
&\psi_1(r) \geq \frac{\psi_1(0)\bar{r}}{2e^{\bar{r}\sqrt{M}} } \frac{e^{r\sqrt{M}}}{r} \quad & r \in [\bar{r},+\infty).
\end{array}\right.
\end{equation*}
Choosing $\bar{r}=\ep -\theta$, with $\theta \in (0,\ep)$, we obtain the following inequalities
$$
\begin{array}{rll}
&\psi_1(R-\ep)\leq \psi_1(0) e^{(R-\ep)\sqrt{M}} \\
&A=\psi_1(R) \geq \frac{\psi_1(0)(\ep-\theta)}{2e^{(\ep-\theta)\sqrt{M}}}\frac{e^{R\sqrt{M}}}{R},
\end{array}
$$
which imply
$$
\psi_1(r) \leq \psi_1(R-\ep) \leq \frac{2 A R}{\ep-\theta} e^{-\theta \sqrt{M}}, \qquad \text{ for all}\ r \in [0,R-\ep].
$$
Defining $v_1(x)=\psi_1(|x-x_0|)$ (where $x_0$ is the center of $B_R$) and using the maximum principle we infer $0\leq u_1(x)\leq v_1(x)$. To obtain an upper estimate for $u_2$, let us now multiply the equation for $u_2$ by $u_2$ itself and integrate; having zero boundary conditions we have
$$
\int_{B_R}M u_2^2 \leq \int_{B_R}|\nabla u_2|^2+Mu_2^2 = \int_{B_R}h u_2 \leq \left(\int_{B_R}\frac{h^2}{M}\right)^{1/2}
\left(\int_{B_R} M u_2^2 \right)^{1/2},
$$
and therefore $\|u_2\|_{L^2(B_R)} \leq \frac{1}{M}\|h\|_{L^2(B_R)}$. In conclusion we have $\|u\|_{L^2(B_{R-\ep})} \leq \|u_1\|_{L^2(B_{R-\ep})} + \|u_2\|_{L^2(B_R)}$ which gives the desired estimates.
\end{proof}

\subsection{Normalization and blow--up}\label{subsec:blowup}

To start with, we recall the standard non--uniform regularity properties for solutions to system \eqref{system_main}.
\begin{rem}\label{rem:H^2}
Let $u_\b,v_\b$ be solutions of \eqref{system_main}. Then, since $h_\b,k_\b$ belong to $L^2(\O)$, and $\O$ is bounded and regular, by elliptic regularity theory it holds
\[
u_\b,v_\b\in H^2(\O)\quad \text{that implies} \quad u_\b,v_\b\in C^{0,\a}(\overline{\O})
\]
for every $\a\in(0,\a^*)$, where $\a^*$ is defined as in \eqref{eq:alphastar}. Let us mention that, if $h_\b\equiv k_\b\equiv0$, then, by a bootstrap argument, we can choose $\a^*=1$ also in dimension $N=3$.
\end{rem}

Coming to the proof of Theorem \ref{theorem_main}, let us assume by
contradiction that, for some $\alpha \in (0,\a^*)$, up to a subsequence it holds
$$
L_\b:=\max\left\{ \max_{x,y\in\overline{\Omega}} \frac{|u_\b(x)-u_\b(y)|}{|x-y|^\a},\
\max_{x,y\in\overline{\Omega}} \frac{|v_\b(x)-v_\b(y)|}{|x-y|^\a}\right\} \longrightarrow +\infty
$$
as $\b \rightarrow +\infty$. We can assume that $L_\b$ is achieved,
say, by $u_\b$ at the pair $(x_\b,y_\b)$. We observe that
\[
|x_\b-y_\b|\rightarrow 0\quad\text{ as }\quad\b\rightarrow +\infty,
\]
since we have
$|x_\b-y_\b|^\a=|u_\b(x_\b)-u_\b(y_\b)|/L_\b \leq 2
\|u_\b\|_\infty/L_\b\leq 2C/L_\b\rightarrow 0$.

The idea now is to consider an uniformly $\a$-H\"older continuous
blow-up with center at $x_\b$. Keeping this in mind, let us define
the rescaled functions
$$
\ubar_\b(x)=\frac{1}{L_\b r_\b^\a}u_\b(x_\b+r_\b x), \quad  \vbar_\b(x)=\frac{1}{L_\b r_\b^\a}v_\b(x_\b+r_\b x),
\qquad \ \text{for }x\in \Omega_\b:=\frac{\Omega-x_\b}{r_\b},
$$
where $r_\b \rightarrow 0$ will be chosen later. Depending on the
asymptotic behavior of the distance $d(x_\b,\partial \Omega)$ and
on $r_\b$, we have $\Omega_\b\rightarrow \Omega_\infty$, where
$\Omega_\infty$ is either $\R^N$ or an half-space (when
$d(x_\b,\partial \Omega)/r_\b\rightarrow \infty$ or the limit is
finite, respectively).

First of all we observe that the $\ubar_\b, \vbar_\b$'s are uniformly $\a$-H\"older continuous for every choice of $r_\b$, with H\"older constant equal to one:
\begin{equation} \label{holderquotient_for_ubar, vbar}
\max\left\{ \max_{x,y\in\Omegabar_\b} \frac{|\ubar_\b(x)-\ubar_\b(y)|}{|x-y|^\a},\ \max_{x,y\in\Omegabar_\b} \frac{|\vbar_\b(x)-\vbar_\b(y)|}{|x-y|^\a}\right\}
=\frac{\left|\ubar_\b(0)-\ubar_\b\left(\frac{y_\b-x_\b}{r_\b}\right)\right|}{\left|\frac{y_\b-x_\b}{r_\b}\right|^\a}=1.
\end{equation}
Moreover the rescaled functions satisfy the following system in $\Omega_\b$:
\begin{equation}\label{system_in_Omega_k}
\left\{\begin{array}{lll}
-\D \ubar_\b + \lb r_\b^2 \ubar_\b &=& \omega_1 M_\b \ubar_\b^3 -\b M_\b \ubar_\b \vbar_\b^2 + \hbar_\b(x)  \\
-\D \vbar_\b + \mub r_\b^2 \vbar_\b &=& \omega_2 M_\b \vbar_\b^3 - \b M_\b \ubar_\b^2 \vbar_\b + \kbar_\b(x) \smallskip\\
\ubar_\b,\ \vbar_\b \in H^1_0(\Omega_\b),
    \end{array}
\right.
\end{equation} where
\[
M_\b:= L_\b^2 r_\b^{2\a+2},
\]
and
$$
\hbar_\b(x):=\frac{r_\b^{2-\a}}{L_\b}h_\b(x_\b+r_\b x), \quad \kbar_\b(x):=\frac{r_\b^{2-\a}}{L_\b}k_\b(x_\b+r_\b x).
$$

\begin{rem}\label{remark_L^2_estimates_rescaled_functions}
Since $u_\b,v_\b$ are $L^{\infty}(\Omega)$--bounded, $h_\b, k_\b$
are $L^2(\Omega)$--bounded, $\lambda_\b,\mu_\b$ are bounded in $\R$,
and $r_\b\rightarrow 0$, $L_\b\rightarrow +\infty$, by direct
calculations it is easy to see that
\[
\begin{split}
&\lambda_\b r_\b^2 \ubar_\b,\ \mu_\b r_\b^2 \vbar_\b \to0 \quad\text{in }L^\infty(\Omega_\b)\\
&\omega_1 M_\b \ubar_\b^3,\ \omega_2 M_\b \vbar_\b^3 \to 0 \quad\text{in }L^\infty(\Omega_\b)\\
&\hbar_\b,\ \kbar_\b\to 0 \quad\text{in }L^2(\Omega_\b).
\end{split}
\]
\end{rem}

In order to manage the different parts of the proof, we will need to
make different choices of the sequence $r_\b$. Once $r_\b$ is
chosen, we wish to pass to the limit (on compact sets), and to this
aim we will use Ascoli--Arzel\`{a}'s Theorem. Now, since the
$\ubar_\b,\vbar_\b$'s are uniformly $\a$-H\"older continuous, it
suffices to show that $\{ \ubar_\b(0)\}$, $\{ \vbar_\b(0)\}$ are
bounded in $\b$.  The following lemma provides a sufficient
condition on $r_\b$ for such a bound to hold.

\begin{lemma} \label{lemma_wise_r_k}
Under the previous notations, let $r_\b \to 0$ as $\b \to +\infty$ be such that
\begin{itemize}
\item[(i)] $\dfrac{|y_\b-x_\b|}{r_\b} \leq R'$ for some $R'>0$,
\item[(ii)] $\b M_\b \nrightarrow 0$.
\end{itemize}
Then $\{\ubar_\b(0)\}, \{\vbar_\b(0)\}$ are uniformly bounded in $\b$.\\
\end{lemma}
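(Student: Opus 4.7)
The plan is to reason by contradiction, combining an exponential-decay argument for the smaller component with a Liouville-type rigidity in the limit. Assume, along a subsequence, that $\max\{\ubar_\b(0),\vbar_\b(0)\}\to+\infty$; by the symmetry of the system we may relabel so that $\ubar_\b(0)\geq\vbar_\b(0)$, whence $\ubar_\b(0)\to+\infty$. From the a priori bound $\ubar_\b(0)\leq \|u_\b\|_\infty/(L_\b r_\b^\a)$ we get $L_\b r_\b^\a\to 0$, and hence $M_\b\ubar_\b(0)^2=r_\b^2 u_\b(x_\b)^2\to 0$. Since $u_\b$ is $\a$-H\"older with constant $L_\b$ and vanishes on $\partial\Omega$, $\ubar_\b(0)\leq (d(x_\b,\partial\Omega)/r_\b)^\a$, so $d(x_\b,\partial\Omega)/r_\b\to\infty$ and $\Omega_\b$ exhausts $\R^N$. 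By \eqref{holderquotient_for_ubar, vbar}, on any fixed ball $B_R$ and for $\b$ large, $\ubar_\b\geq \ubar_\b(0)/2$ and $\vbar_\b\leq 2\ubar_\b(0)$.

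First I force $\vbar_\b\to 0$ via Lemma \ref{lemma_exponential_decay}. Rewrite the $\vbar_\b$ equation as $-\D\vbar_\b+V_\b\vbar_\b\leq \kbar_\b$ on $B_R$, with $V_\b:=\b M_\b\ubar_\b^2-\omega_2 M_\b\vbar_\b^2+\mu_\b r_\b^2$. The last two summands are $o(1)$ uniformly (since $M_\b\vbar_\b^2\leq 4r_\b^2 u_\b(x_\b)^2\to 0$ and $r_\b\to 0$), while hypothesis~(ii) yields $\b M_\b\ubar_\b^2\geq c\,\ubar_\b(0)^2/4$. Hence $V_\b\geq M:=c\,\ubar_\b(0)^2/8\to+\infty$, and Lemma \ref{lemma_exponential_decay} with $A\leq 2\ubar_\b(0)$, $\eps=R/2$, $\theta=R/4$ gives
\[
\|\vbar_\b\|_{L^2(B_{R/2})}\leq 16\,\ubar_\b(0)\,e^{-(R/4)\sqrt M}+\|\kbar_\b\|_{L^2(B_R)}/M\longrightarrow 0,
\]
since the exponential beats the linear prefactor and $\|\kbar_\b\|_{L^2(B_R)}^2\leq C r_\b^{4-2\a-N}/L_\b^2\to 0$ by direct scaling. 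Combined with the unit H\"older bound, the standard interpolation $\|f\|_\infty^{2+N/\a}\leq c_N\|f\|_{L^2}^2$ (valid for $\a$-H\"older $f$ with constant $\leq 1$) upgrades this to $\|\vbar_\b\|_{L^\infty(B_{R/4})}\to 0$, and in particular $\vbar_\b(0)\to 0$.

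Second, I pass to a blow-down limit. Set $\hat u_\b:=\ubar_\b-\ubar_\b(0)$: it retains the unit H\"older quotient, satisfies $\hat u_\b(0)=0$ and $|\hat u_\b(x)|\leq |x|^\a$, and solves
\[
-\D\hat u_\b=\omega_1 M_\b\ubar_\b^3-\b M_\b\ubar_\b\vbar_\b^2-\lb r_\b^2\ubar_\b+\hbar_\b,
\]
where every right-hand-side term tends to zero: $\omega_1 M_\b\ubar_\b^3$ and $\lb r_\b^2\ubar_\b$ are uniformly $O(u_\b(x_\b)^{k}\, r_\b^{2-\a}/L_\b)\to 0$ on $B_R$ (using $\ubar_\b\leq 2\ubar_\b(0)$ and $M_\b\ubar_\b(0)^2=r_\b^2 u_\b(x_\b)^2$); $\hbar_\b\to 0$ in $L^2$; and $\b M_\b\ubar_\b\vbar_\b^2\to 0$ uniformly on $B_{R/8}$ by exploiting the exponentially small homogeneous part of Lemma \ref{lemma_exponential_decay} (whose decay rate is linear in $\ubar_\b(0)$ and thus dominates the polynomial factor $\b M_\b\ubar_\b\sim \b M_\b\ubar_\b(0)$ once squared). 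By Ascoli--Arzel\`a and elliptic $W^{2,p}$-regularity, along a subsequence $\hat u_\b\to\hat u$ uniformly on compacts of $\R^N$, with $\hat u$ globally $\a$-H\"older, harmonic, and $\hat u(0)=0$. By Corollary \ref{coro_liouville_harmonic}, $\hat u\equiv 0$. But passing to the limit in $|\hat u_\b(0)-\hat u_\b(y_\b^*)|=|y_\b^*|^\a$, where $y_\b^*:=(y_\b-x_\b)/r_\b$ is bounded by~(i) and (after a subsequence) converges to $y^*$, yields $|\hat u(y^*)|=|y^*|^\a$. If $|y^*|>0$ this contradicts $\hat u\equiv 0$; in the residual case $|y_\b^*|\to 0$, the same contradiction follows by further rescaling $\tilde u_\b(z):=\hat u_\b(|y_\b^*|z)/|y_\b^*|^\a$, which again has unit H\"older quotient attained now at $z_\b^*:=y_\b^*/|y_\b^*|$ on the unit sphere, satisfies an equation with even smaller right-hand side, and whose limit is a globally $\a$-H\"older harmonic function non-constant on $\R^N$, contradicting Corollary \ref{coro_liouville_harmonic}.

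The principal obstacle is the delicate control of $\b M_\b\ubar_\b\vbar_\b^2$ in the $\hat u_\b$ equation: since only the lower bound $\b M_\b\geq c$ is granted while $\ubar_\b\sim\ubar_\b(0)\to+\infty$, the merely polynomial $L^\infty$-decay of $\vbar_\b$ inherited from the $L^2$-source term in Lemma \ref{lemma_exponential_decay} is \emph{not} sufficient. One must carefully isolate the exponentially decaying homogeneous component of Lemma \ref{lemma_exponential_decay}---whose rate $\sim e^{-c\,\ubar_\b(0)}$ dominates any polynomial in $\ubar_\b(0)$ and hence any realistic growth of $\b M_\b\ubar_\b(0)$---from the $L^2$-source component, which gives only polynomial-in-$\ubar_\b(0)^{-1}$ decay.
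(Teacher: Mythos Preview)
Your overall architecture matches the paper's: contradiction, exponential decay of the ``small'' component via Lemma~\ref{lemma_exponential_decay}, then Liouville (Corollary~\ref{coro_liouville_harmonic}) for the recentered ``large'' component. The gap is precisely where you flag the ``principal obstacle'': the control of $\b M_\b \ubar_\b \vbar_\b^2$ does not go through as written.

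When you apply Lemma~\ref{lemma_exponential_decay} you lower-bound the potential $\b M_\b \ubar_\b^2$ by $c\,\ubar_\b(0)^2/4$, \emph{discarding} the factor $\b M_\b$ in favor of its subsequential lower bound $c$ from hypothesis~(ii). This gives a decay rate $\sim e^{-c'\ubar_\b(0)}$, and you then claim that this exponential ``dominates any polynomial in $\ubar_\b(0)$ and hence any realistic growth of $\b M_\b\ubar_\b(0)$.'' But hypothesis~(ii) gives no \emph{upper} bound on $\b M_\b$, and there is no a~priori relation between $\b M_\b$ and $\ubar_\b(0)$ (the lemma must hold for \emph{any} admissible choice of $r_\b$). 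Nothing prevents, say, $\b M_\b \sim e^{e^{\ubar_\b(0)}}$, in which case $\b M_\b\,\ubar_\b(0)\,e^{-c'\ubar_\b(0)}\to+\infty$. So the step ``$\b M_\b\ubar_\b\vbar_\b^2\to 0$'' is unjustified.

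The paper fixes this by \emph{keeping} $\b M_\b$ inside the quantity governing the decay. It sets $I_\b:=\inf_{B_{2R}}\b M_\b\ubar_\b\to+\infty$ (note: first power of $\ubar_\b$), so that on the one hand the potential in the $\vbar_\b$ inequality satisfies $\b M_\b\ubar_\b^2\ge I_\b$ (using $\ubar_\b\ge 1$), giving decay $e^{-C'\sqrt{I_\b}}$; and on the other hand, on $B_R$,
\[
\b M_\b\ubar_\b\ \le\ I_\b+\b M_\b(4R)^\a\ \le\ 2I_\b
\]
by the unit H\"older bound. Thus the factor to be killed and the decay rate are expressed in the \emph{same} diverging quantity $I_\b$, and $I_\b\cdot e^{-C'\sqrt{I_\b}}\to 0$, $I_\b\cdot \dfrac{\|\kbar_\b\|_{L^2}}{I_\b}\to 0$ follow directly. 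The paper also proves, via a short energy argument (testing with $\eta^2\vbar_\b$), that $\vbar_\b$ is \emph{uniformly bounded} on $B_{2R}$; this both gives a bounded prefactor $A$ in Lemma~\ref{lemma_exponential_decay} and lets one pass from $\b M_\b\ubar_\b\vbar_\b$ to $\b M_\b\ubar_\b\vbar_\b^2$ trivially. Your relabeling $\vbar_\b(0)\le\ubar_\b(0)$ only gives $A\le 2\ubar_\b(0)$, which is acceptable \emph{provided} you retain $\b M_\b$ in the decay rate; once you do, $\b M_\b\ubar_\b(0)^3\,e^{-c_1\sqrt{\b M_\b}\,\ubar_\b(0)}\to 0$ follows from $s:=\sqrt{\b M_\b}\,\ubar_\b(0)\to\infty$ and $\b M_\b\ubar_\b(0)^3\le s^3/\sqrt c$.

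A secondary point: your handling of the case $(y_\b-x_\b)/r_\b\to 0$ by a further rescaling is more delicate than necessary. The paper dispatches this by observing that $\|\Delta\ubar_\b\|_{L^2(B_R)}\to 0$ upgrades $\tilde u_\b:=\ubar_\b-\ubar_\b(0)$ to a uniform $C^{0,\gamma}$ bound for some $\gamma>\a$ (via $W^{2,2}\hookrightarrow C^{0,\gamma}$), whence
\[
1=\frac{|\tilde u_\b(0)-\tilde u_\b((y_\b-x_\b)/r_\b)|}{|(y_\b-x_\b)/r_\b|^\a}\ \le\ C\,\Bigl|\tfrac{y_\b-x_\b}{r_\b}\Bigr|^{\gamma-\a}\ \longrightarrow\ 0,
\]
a direct contradiction.
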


\begin{proof}
Assume by contradiction that $\{\ubar_\b(0)\}$ is unbounded, and let
$R\geq R'$. Since the $\ubar_\b$'s are uniformly H\"older continuous
and vanish on $\partial \Omega_\b$, we can consider $\b$
sufficiently large such that $B_{2R}(0)\subset \Omega_\b$. Moreover
since $\b M_\b \nrightarrow 0$, we have that
$$I_\b:= \inf_{B_{2R}(0)} \b M_\b \ubar_\b \longrightarrow +\infty.$$
Now since $\b M_\b \ubar_\b \leq \b M_\b \ubar_\b^2$ in $B_{2R}(0)$
and (similarly to Remark
\ref{remark_L^2_estimates_rescaled_functions}) $\|\omega_2 M_\b
\vbar_\b^2\|_{L^\infty(B_{2R})}\to 0$ as $\b\to+\infty$, we have
\begin{eqnarray*}
 -\Delta \vbar_\b &=& -\mu_\b r_\b^2 \vbar_\b + \omega_2 M_\b \vbar_\b^3 - \b M_\b \ubar_\b^2 \vbar_\b + \kbar_\b\\
&\leq & -\frac{I_\b}{2} \vbar_\b + \kbar_\b.
\end{eqnarray*}

In order to use Lemma \ref{lemma_exponential_decay}, we need to show that $\vbar_\b$ is bounded on $\partial B_{2R}(0)$. With this in mind, let us choose a cut-off function $\eta$ that vanishes outside $B_{2R}(0)$. Then by testing the second equation in \eqref{system_in_Omega_k} with $\eta^2 \vbar_\b$ in $B_{2R}(0)$, we obtain
\begin{equation*}
\int_{B_{2R}(0)}\{\eta^2 |\nabla \vbar_\b|^2 + 2\eta \vbar_\b \nabla \eta \cdot \nabla \vbar_\b + \mu_\b r_\b^2\vbar_\b^2\eta^2\} \leq \int_{B_{2R}(0)}\{\omega_2 M_\b \vbar_\b^4 \eta^2 - I_\b \eta^2 \vbar_\b^2 + \kbar_\b \eta^2 \vbar_\b\},
\end{equation*}
and thus
\begin{eqnarray*}
\int_{B_{2R}(0)}\{\frac{1}{2}\eta^2|\nabla \vbar_\b|^2+I_\b \eta^2 \vbar_\b^2\}&\leq& 2\int_{B_{2R}(0)} \{|\nabla \eta|^2\vbar_\b^2 + |\mu_\b|r_\b^2 v_\b^2\eta^2 + |\omega_2| M_\b \vbar_\b^4 \eta^2 + \kbar_\b \eta^2\vbar_\b\}\\
&\leq& C(R) (\sup_{B_{2R}(0)}\vbar_\b^2+1).
\end{eqnarray*}
On the other hand, since $\vbar_\b$ is uniformly H\"older continuous,
$$I_\b \int_{B_{2R}(0)}\eta^2 \vbar_\b^2 \geq I_\b C'(R) \inf_{B_{2R}(0)} \vbar_\b^2 \geq I_\b C'(R) \sup_{B_{2R}(0)} \vbar_\b^2 - I_\b C''(R).$$
Therefore, putting together the two previous inequalities, we obtain
$$I_\b C(R) \sup_{B_{2R}(0)}\vbar_\b^2\leq C'(R) (\sup_{B_{2R}(0)}\vbar_\b^2+1) + I_\b C''(R)$$ which implies the boundedness of $\vbar_\b$ in $B_{2R}(0)$ (in particular on $\partial B_{2R}(0)$).

Thus we can apply Lemma \ref{lemma_exponential_decay}, which gives
$$\|\vbar_\b\|_{L^2(B_R)}\leq C e^{-C'\sqrt{I_\b}}+ \frac{2}{I_\b}\|\kbar_\b\|_{L^2(B_{2R})}.$$
Hence
$$\|\b M_\b \ubar_\b \vbar_\b\|_{L^2(B_R)}\leq (I_\b + \b M_\b(4R)^\a)\|\vbar_\b\|_{L^2(B_r)}\leq
2I_\b
(Ce^{-C'\sqrt{I_\b}}+\frac{2}{I_\b}\|\kbar_\b\|_{L^2(B_{2R})})\rightarrow 0$$ when $\b\rightarrow +\infty$. This, together with Remark \ref{remark_L^2_estimates_rescaled_functions} and the boundedness of $\vbar_\b$, gives
\begin{equation}\label{delta_ubar_delta_vbar_go_to_0_in_L2}\|\Delta \ubar_\b\|_{L^2(B_R)}\rightarrow 0\end{equation}for every $R\geq R'$.

Consider now $\tilde{u}_\b(x):= \ubar_\b(x)-\ubar_\b(0)$. By the
uniform H\"older continuity and Ascoli--Arzel\`a's Theorem we know
that $\tilde{u}_\b\rightarrow \tilde{u}_\infty$ on compact sets.
Moreover by \eqref{delta_ubar_delta_vbar_go_to_0_in_L2} we have that
$\ubar_\b$ is bounded in $C^{0,\gamma}_{\rm loc}$, with $\gamma\in
(0,\a^*)$ (in fact, Theorem 8.12 of \cite{GT} gives us boundedness
in $W^{2,2}$, and the result follows by Sobolev imbbedings). As a
consequence we obtain:
\begin{equation} \label{holder_equal_1_in_lemma_wise_r_k}
\max_{x,y\in \overline{\Omega}_\infty} \frac{|\tilde{u}_\infty(x)-\tilde{u}_\infty(y)|}{|x-y|^\a} =1.
\end{equation}
Indeed notice that by assumption $(i)$, ${(y_\b-x_\b)}/{r_\b}$ must
converge up to a subsequence. But it can not be
${(y_\b-x_\b)}/{r_\b}\rightarrow 0$, otherwise we would have
(considering an $\varepsilon>0$ sufficiently small)
\begin{equation}\label{eq:tende_a_0}
\frac{\left|\ubar_\b(0)-\ubar_\b
\left(\frac{y_\b-x_\b}{r_\b}\right)\right|}{\left|\frac{y_\b-x_\b}{r_\b}\right|^\a}=\left|\frac{y_\b-x_\b}{r_\b}\right|^\varepsilon
\frac{\left|\tilde{u}_\b
(0)-\tilde{u}_\b\left(\frac{y_\b-x_\b}{r_\b}\right)\right|}{\left|\frac{y_\b-x_\b}{r_\b}\right|^{\a+\varepsilon}}\leq
C \left|\frac{y_\b-x_\b}{r_\b}\right|^\varepsilon \to 0
\end{equation}
with $\b$, which contradicts \eqref{holderquotient_for_ubar, vbar}.
Therefore, there is an $a\in \R^N\setminus \{ 0 \} $ such that
$({y_\b-x_\b})/{r_\b}\rightarrow a$, and hence the left hand side of
\eqref{holderquotient_for_ubar, vbar} also passes to the limit in
$\b$, providing \eqref{holder_equal_1_in_lemma_wise_r_k}.

Finally, we have that $\Delta \tilde{u}_\infty=0$ in $\Omega_\infty$. Now if $\Omega_\infty=\R^N$, by Corollary \ref{coro_liouville_harmonic} $\tilde{u}_\infty$ is a constant, in contradiction with \eqref{holder_equal_1_in_lemma_wise_r_k}. On the other hand, if $\Omega_\infty$ is an half space, we have that $\tilde{u}_\infty=0$ on $\partial \Omega_\infty$ and thus we can extend it by symmetry as an harmonic function in the whole $\R^N$, obtaining the same contradiction.

We have shown that $\{\ubar_\b(0)\}$ is bounded. Let us now check that the same happens with $\{\vbar_\b(0)\}$. In order to do so, we have to make some small changes to the previous argument. Assume then that $\{\vbar_\b(0)\}$ is unbounded, and consider the quantity (for $R\geq R'$ fixed)
$$\tilde{I}_\b:= \inf_{B_{2R}(0)}\b M_\b \vbar_\b^2 \rightarrow +\infty.$$
We have
$$-\Delta \ubar_\b\leq -\frac{\tilde{I}_\b}{2}\ubar_\b+\hbar_\b$$ and $\ubar_\b$ is bounded on $\partial B_{2R}(0)$. Therefore by Lemma \ref{lemma_exponential_decay}
$$\|\ubar_\b\|_{L^2(B_r)}\leq C e^{-C'\sqrt{\tilde{I}_\b}}+\frac{2}{\tilde{I}_\b}\|\hbar_\b\|_{L^2(B_{2R})}$$ and hence
$$\|\b M_\b \ubar_\b \vbar_\b^2\|_{L^2(B_R)}\rightarrow 0$$ as $\b\rightarrow +\infty$. Once again this gives $\|\Delta \ubar_\b\|_{L^2(B_R)}\rightarrow 0$ and the proof follows as before.
\end{proof}
Using the previous lemma we can now quantify the asymptotic relation between $\b$, $L_\b$ and $|x_\b-y_\b|$.
\begin{lemma}\label{lem:conv0}
Under the previous notation, we have (up to a subsequence)
\[
\b L_\b ^2 |x_\b-y_\b|^{{2\a + 2}}\to+\infty.
\]
\end{lemma}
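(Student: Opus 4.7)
The plan is to argue by contradiction, choosing the blow-up scale so that the coupling constant $\b M_\b$ in the rescaled system is forced to equal $1$, and then to derive a contradiction via the Liouville-type Proposition \ref{prop_liouville_not_disjoint_supports}. Suppose along a subsequence that $\b L_\b^2 |x_\b - y_\b|^{2\a+2}$ remains bounded. Setting
\[
r_\b := (\b L_\b^2)^{-1/(2\a+2)},
\]
we have $r_\b \to 0$ (as $\b L_\b^2 \to +\infty$), while by construction $\b M_\b = \b L_\b^2 r_\b^{2\a+2} \equiv 1$, which gives condition (ii) of Lemma \ref{lemma_wise_r_k}. Moreover
\[
\frac{|y_\b - x_\b|}{r_\b} = \bigl(\b L_\b^2 |x_\b - y_\b|^{2\a+2}\bigr)^{1/(2\a+2)}
\]
is bounded by the contradiction hypothesis, so condition (i) holds as well; Lemma \ref{lemma_wise_r_k} thus yields the boundedness of $\{\ubar_\b(0)\}$ and $\{\vbar_\b(0)\}$.

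Combined with the uniform H\"older bound \eqref{holderquotient_for_ubar, vbar} and the $L^2_{\mathrm{loc}}$ decay of the lower order terms recorded in Remark \ref{remark_L^2_estimates_rescaled_functions}, standard $W^{2,2}_{\mathrm{loc}}$ elliptic estimates applied to \eqref{system_in_Omega_k} yield a subsequential limit $(\ubar_\infty, \vbar_\infty)$ of nonnegative functions on $\Omega_\infty$ (equal to $\R^N$ or to a half-space) solving
\[
-\D \ubar_\infty = -\ubar_\infty \vbar_\infty^2, \qquad -\D \vbar_\infty = -\ubar_\infty^2 \vbar_\infty,
\]
i.e., system \eqref{system_limit}, and globally $\a$-H\"older continuous with H\"older seminorm at most one. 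Extracting a further subsequence along which $(y_\b - x_\b)/r_\b \to a \in \R^N$: if $a \neq 0$, passing to the limit in \eqref{holderquotient_for_ubar, vbar} gives $|\ubar_\infty(0) - \ubar_\infty(a)| = |a|^\a > 0$, so $\ubar_\infty$ is non-constant; when $\Omega_\infty = \R^N$ this directly contradicts Proposition \ref{prop_liouville_not_disjoint_supports}, while in the half-space case one extends $\ubar_\infty, \vbar_\infty$ by zero to $\R^N$, verifies that the extensions are nonnegative subsolutions of \eqref{system_limit} in the weak sense, and concludes via the subsolution version of Proposition \ref{prop_liouville_not_disjoint_supports} mentioned in the sketch of Proposition \ref{prop:liouville_k}. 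If instead $a = 0$, the upgraded $C^{0, \a+\varepsilon}_{\mathrm{loc}}$ bound on $\tilde{u}_\b := \ubar_\b - \ubar_\b(0)$ (coming from elliptic regularity together with $\a < \a^*$) allows the computation in \eqref{eq:tende_a_0} to be repeated verbatim, producing a H\"older quotient that tends to zero and contradicting \eqref{holderquotient_for_ubar, vbar}.

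The main obstacle is the half-space case, since Proposition \ref{prop_liouville_not_disjoint_supports} is stated only for solutions on all of $\R^N$; the technical point is to check that zero-extension preserves the weak subsolution property of \eqref{system_limit} (using the sign of the outward normal derivative of $\ubar_\infty, \vbar_\infty$ at the flat boundary $\partial \Omega_\infty$) and that Lemma \ref{lemma_monotonicity_formula_not_disjoint_supports} survives in this subsolution setting, exactly along the lines sketched for Proposition \ref{prop:liouville_k}.
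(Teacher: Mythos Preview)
Your argument tracks the paper's proof closely: the same choice $r_\b=(\b L_\b^2)^{-1/(2\a+2)}$, the same appeal to Lemma \ref{lemma_wise_r_k}, the same passage to the limit system \eqref{system_limit}, and the same contradiction via Proposition \ref{prop_liouville_not_disjoint_supports} when $\Omega_\infty=\R^N$. The handling of the case $(y_\b-x_\b)/r_\b\to 0$ via \eqref{eq:tende_a_0} also matches.

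The one point of divergence is the half-space case, and here your proposal has a gap. You extend the limiting pair by zero and invoke a subsolution version of Proposition \ref{prop_liouville_not_disjoint_supports}. But Lemma \ref{lemma_monotonicity_formula_not_disjoint_supports}---and the subsolution variant alluded to in the sketch of Proposition \ref{prop:liouville_k}---is stated for \emph{positive} (sub)solutions; the zero-extended functions vanish on an entire half-space, so the key step $\int_{\partial B_1}u_{(r)}^2\geq C>0$ in that lemma (which relies on $u$ being positive at the center of the balls) is not available as written. To rescue your route you would have to relocate the center of the monotonicity formula to an interior point of $\Omega_\infty$ where both limits are strictly positive, and treat separately the sub-case $v_\infty\equiv 0$ (in which $u_\infty$ becomes harmonic in the half-space and a different argument is needed). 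The paper sidesteps all of this: since $u_\infty=0$ on $\partial\Omega_\infty$, it reflects $u_\infty$ evenly across the hyperplane and applies Proposition \ref{prop_liouville_disjoint_supports} (the disjoint-support ACF result) to the pair $\bigl(u_\infty|_{\Omega_\infty},\,u_\infty|_{\R^N\setminus\overline{\Omega}_\infty}\bigr)$, each extended by zero and centered at any point of $\partial\Omega_\infty$. These two functions are nonnegative, subharmonic, have disjoint supports and a common zero, so one must vanish identically, forcing $u_\infty\equiv 0$ and contradicting \eqref{Db_bounded_holder_uinfty}. This argument uses only subharmonicity, not the coupled system structure, and entirely avoids the positivity issue.
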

\begin{proof}
By contradiction assume that  $\b L_\b ^2 |x_\b-y_\b|^{{2\a + 2}}$ is bounded. Then we can choose
\[
r_\b=(\b L_\b^2)^{-\frac{1}{2\a +2}}\qquad\text{(and thus $\b M_\b=1$)},
\]
in such a way that the assumptions of Lemma \ref{lemma_wise_r_k} are
satisfied and thus $\{\ubar_\b(0)\}, \{\vbar_\b(0)\}$ are bounded.
By uniform H\"older continuity and Ascoli--Arzel\`a's theorem we
have that, up to a subsequence, there exist $u_\infty,v_\infty$ such
that $\ubar_\b \rightarrow u_\infty,\ \vbar_\b \rightarrow v_\infty$
uniformly in the compact subsets of $\overline{\Omega}_\infty$.
Since $\b M_\b=1$ and by Remark
\ref{remark_L^2_estimates_rescaled_functions}, we have that $\Delta
\ubar_\b,\ \Delta \vbar_\b$ are bounded in $L^2_{\mathrm{loc}}$
and therefore the same happens to $\ubar_\b,\vbar_\b$ in
$C^{0,\gamma}_\mathrm{loc}(\overline{\Omega}_\infty)$, for all $\gamma \in (0,\a^*)$.
We are now going to show that, as a consequence, $u_\infty,v_\infty$
are $\a$-H\"older continuous and that the maximum of the H\"older
quotients is given by:
\begin{equation} \label{Db_bounded_holder_uinfty}
\max_{x,y\in \overline{\Omega}_\infty} \frac{|u_\infty(x)-u_\infty(y)|}{|x-y|^\a} =1.
\end{equation}
Indeed notice that we cannot have $ ({y_\b-x_\b})/{r_\b}\rightarrow
0$, otherwise we would obtain the same contradiction as in
\eqref{eq:tende_a_0}. Therefore, there is an $a\in\R^2\backslash
\{0\}$ such that $({y_\b-x_\b})/{r_\b}\rightarrow
a$, and hence the left hand side of \eqref{holderquotient_for_ubar,
vbar} also passes to the limit in $\b$, providing
\eqref{Db_bounded_holder_uinfty}. Moreover, at the limit we have
\begin{equation} \nonumber
\left\{\begin{array}{llll}
-\Delta u_\infty &=& -u_\infty v_\infty^2 \quad &\text{in} \ \Omega_\infty\\
-\Delta v_\infty &=& -u_\infty^2 v_\infty \quad &\text{in} \ \Omega_\infty.
\end{array}\right.
\end{equation}

If $\Omega_\infty=\R^N$, then by Proposition
\ref{prop_liouville_not_disjoint_supports} $u_\infty,v_\infty$ are
constants, which contradicts \eqref{Db_bounded_holder_uinfty}.

On the other hand, let $\Omega_\infty$ be equal to an half-space. Since $u_\infty=0$ on $\partial \Omega_\infty$, we can extend it to the whole space by even symmetry and obtain a function satisfying the hypotheses of Proposition \ref{prop_liouville_disjoint_supports} (apply it to the pair $({u_\infty}_{|_{\Omega_\infty}},{u_\infty}_{|_{\R^N\setminus \overline{\Omega}_\infty}})$ -- were we consider both functions extended by 0 -- and choose for $x_0$ any point of $\partial \Omega_\infty$). Therefore $u_\infty \equiv 0$, which contradicts (\ref{Db_bounded_holder_uinfty}).
\end{proof}

Now we are in a position to define our choice of $r_\b$ and to deduce the convergence of the blow--up sequences.

\begin{lemma}\label{lemma_H^1_convergence}
Let
\[
r_\b=|x_\b-y_\b|.
\]
Then there exist $u_\infty,v_\infty\in C^{0,\alpha}(\R^N)$ such that, as $\b\to+\infty$ (up to subsequences), there holds
\begin{itemize}
\item[(i)] $\ubar_\b\to u_\infty$, $\vbar_\b\to v_\infty$, uniformly in compact subsets of $\Omega_\infty=\R^N$; moreover
\item[(ii)] for any fixed $r>0$ and $x_0\in \R^N$ there holds
$\dis\int_{B_r(x_0)} \b M_\b \ubar_\b^2\vbar_\b^2 \to 0$; as a consequence
\item[(iii)] $\| \ubar_\b - u_\infty \|_{H^1(B_r(x_0))} \to 0$, $\|\vbar_\b-v_\infty\|_{H^1(B_r(x_0))} \to 0$.
\end{itemize}
\end{lemma}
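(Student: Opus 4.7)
My plan is to first apply Lemmas \ref{lemma_wise_r_k} and \ref{lem:conv0} to extract uniform limits and rule out the half-space case for $\Omega_\infty$ (exactly as in the last paragraph of the proof of Lemma \ref{lem:conv0}), then to prove (ii) by splitting the ball into regions where $u_\infty\geq\delta$, $v_\infty\geq\delta$, or both are small, and finally to upgrade weak to strong $H^1_{\mathrm{loc}}$-convergence via a testing argument that exploits (ii). With $r_\b=|x_\b-y_\b|$ one has $|y_\b-x_\b|/r_\b=1$, and by Lemma \ref{lem:conv0}, $\b M_\b=\b L_\b^2|x_\b-y_\b|^{2\a+2}\to+\infty$, so the hypotheses of Lemma \ref{lemma_wise_r_k} are met and $\{\ubar_\b(0)\},\{\vbar_\b(0)\}$ are bounded. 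By Ascoli-Arzel\`a and \eqref{holderquotient_for_ubar, vbar}, a subsequence converges uniformly on compacts of $\overline{\Omega_\infty}$ to $u_\infty,v_\infty\in C^{0,\a}$; along a further subsequence $(y_\b-x_\b)/r_\b\to a$ with $|a|=1$, which passes to the limit in \eqref{holderquotient_for_ubar, vbar} and gives $|u_\infty(a)-u_\infty(0)|=1$. Testing the $\ubar_\b$-equation with $\eta^2\ubar_\b$ produces the energy bound $\int \eta^2|\nabla\ubar_\b|^2+\b M_\b\int \eta^2\ubar_\b^2\vbar_\b^2\leq C(\eta)$, which furnishes an $H^1_{\mathrm{loc}}$-bound on $\ubar_\b$ and, since $\b M_\b\to+\infty$, forces $u_\infty v_\infty\equiv 0$. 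Dropping the nonnegative interaction term and invoking Remark \ref{remark_L^2_estimates_rescaled_functions} shows that $u_\infty,v_\infty$ are subharmonic; if $\Omega_\infty$ were a half-space, extending $u_\infty$ by $0$ across $\partial\Omega_\infty$ and applying Proposition \ref{prop_liouville_disjoint_supports} to $u_\infty|_{\Omega_\infty}$ and its reflection would force $u_\infty\equiv 0$, contradicting $|u_\infty(a)-u_\infty(0)|=1$. Hence $\Omega_\infty=\R^N$ and (i) holds.

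For (ii), testing the $\ubar_\b$-equation with a cut-off $\eta\in C_c^\infty(\R^N)$ equal to $1$ on $B_r(x_0)$ yields the $L^1$-estimate $\int \b M_\b\ubar_\b\vbar_\b^2\eta\leq C$, since $\int\nabla\ubar_\b\cdot\nabla\eta$ is controlled by the $H^1_{\mathrm{loc}}$-bound and all remaining terms vanish thanks to Remark \ref{remark_L^2_estimates_rescaled_functions}. Fix $\delta>0$ and decompose $\overline{B_r(x_0)}=A_\delta\cup A'_\delta\cup E_\delta$ with $A_\delta=\{u_\infty\geq \delta\}$, $A'_\delta=\{v_\infty\geq\delta\}$ (disjoint by $u_\infty v_\infty\equiv 0$), and $E_\delta=\{u_\infty<\delta\}\cap\{v_\infty<\delta\}$. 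Cover $A_\delta$ by finitely many balls $B_{\rho_i}(z_i)$ such that $u_\infty\geq \delta/2$ on $B_{2\rho_i}(z_i)$; uniform convergence gives $\ubar_\b\geq\delta/4$ there for $\b$ large, so the $\vbar_\b$-equation implies
\[
-\Delta\vbar_\b + \tfrac{1}{16}\b M_\b\delta^2\,\vbar_\b \leq \kbar_\b + o(1) \qquad\text{in } B_{2\rho_i}(z_i),
\]
and since $\vbar_\b$ is bounded on $\partial B_{2\rho_i}(z_i)$, Lemma \ref{lemma_exponential_decay} yields $\b M_\b\|\vbar_\b\|_{L^2(B_{\rho_i}(z_i))}^2\to 0$; summing over $i$ gives $\int_{A_\delta}\b M_\b\ubar_\b^2\vbar_\b^2\to 0$, and symmetrically for $A'_\delta$. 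On $E_\delta$, $\ubar_\b\leq 2\delta$ eventually, so $\int_{E_\delta}\b M_\b\ubar_\b^2\vbar_\b^2\leq 2\delta\int \b M_\b\ubar_\b\vbar_\b^2\leq 2\delta C$. Letting $\delta\to 0$ proves (ii). Controlling the transition region $E_\delta$ by trading the quadratic factor against the $L^1$-bound is the main technical obstacle.

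For (iii), testing the $\ubar_\b$-equation with $\eta^2\ubar_\b$ yields
\[
\int \eta^2|\nabla\ubar_\b|^2 = -2\int \eta\ubar_\b\nabla\ubar_\b\cdot\nabla\eta - \b M_\b\int \eta^2\ubar_\b^2\vbar_\b^2 + o(1),
\]
while testing with $\eta^2 u_\infty$ (viewed as a test function) and writing $\b M_\b\ubar_\b\vbar_\b^2 u_\infty = \b M_\b\ubar_\b^2\vbar_\b^2 + \b M_\b\ubar_\b\vbar_\b^2(u_\infty-\ubar_\b)$, where the latter summand is bounded by $\|u_\infty-\ubar_\b\|_{L^\infty}$ times the $L^1$-bound on $\b M_\b \ubar_\b \vbar_\b^2$, gives
\[
\int \nabla\ubar_\b\cdot\nabla(\eta^2 u_\infty) = o(1).
\]
Passing to the limit via $\nabla\ubar_\b\rightharpoonup\nabla u_\infty$ weakly in $L^2_{\mathrm{loc}}$ in the latter yields $\int\nabla u_\infty\cdot\nabla(\eta^2 u_\infty)=0$, i.e.\ $\int \eta^2|\nabla u_\infty|^2 = -2\int \eta u_\infty\nabla u_\infty\cdot\nabla\eta$. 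Comparing with the first identity and invoking (ii) gives $\int\eta^2|\nabla\ubar_\b|^2\to\int\eta^2|\nabla u_\infty|^2$, which together with weak convergence provides the strong $H^1_{\mathrm{loc}}$-convergence; the same argument applies to $\vbar_\b$.
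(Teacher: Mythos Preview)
Your argument is correct. Part~(i) follows the paper's route essentially verbatim. The differences lie in (ii) and (iii).

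For (ii), your three–region decomposition $A_\delta\cup A'_\delta\cup E_\delta$ together with Lemma~\ref{lemma_exponential_decay} works, but it is considerably heavier than what the paper does. Once you know $u_\infty v_\infty\equiv 0$ and have the $L^1$-bound $\int_{B_r}\b M_\b\ubar_\b\vbar_\b^2\leq C$ (and the symmetric one), the paper simply writes $B_r=\{u_\infty=0\}\cup\{v_\infty=0\}$ and estimates
\[
\int_{B_r}\b M_\b\ubar_\b^2\vbar_\b^2
\leq \|\ubar_\b\|_{L^\infty(B_r\cap\{u_\infty=0\})}\int_{B_r}\b M_\b\ubar_\b\vbar_\b^2
+\|\vbar_\b\|_{L^\infty(B_r\cap\{v_\infty=0\})}\int_{B_r}\b M_\b\ubar_\b^2\vbar_\b,
\]
which tends to zero by uniform convergence alone. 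No covering, no exponential decay, no limit in $\delta$. Your use of the $E_\delta$-estimate via the $L^1$-bound is exactly the mechanism behind this, but the paper exploits the \emph{exact} zero set rather than a $\delta$-thickening, which removes the need to treat $A_\delta,A'_\delta$ separately.

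For (iii), your cut-off approach (test with $\eta^2\ubar_\b$ and $\eta^2 u_\infty$, then compare) is a legitimate variant; the paper instead tests with $\ubar_\b-u_\infty$ on $B_r$ and handles the boundary term via the bound on $\int_{\partial B_r}\partial_\nu\ubar_\b$ obtained by integrating the equation. Your version has the mild advantage of avoiding boundary integrals altogether; the paper's is shorter once those are controlled.
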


\begin{proof}
With this choice of $r_\b$, we obtain $\b M_\b=\b L_\b ^2 |x_\b-y_\b|^{{2\a + 2}}\to+\infty$ by Lemma \ref{lem:conv0}. Once again the assumptions of Lemma \ref{lemma_wise_r_k} are satisfied and hence, reasoning as in the initial part of the proof of Lemma \ref{lem:conv0}, we deduce that the rescaled functions $\ubar_\b,\vbar_\b$ converge uniformly to some
$u_\infty,v_\infty$, in every compact set of $\Omegabar_\infty$.
In this situation \eqref{holderquotient_for_ubar, vbar} writes
$$
1=\max \left\{  \max_{x,y\in\Omegabar_\b} \frac{|\ubar_\b(x)-\ubar_\b(y)|}{|x-y|^\a},
\max_{x,y\in\Omegabar_\b} \frac{|\vbar_\b(x)-\vbar_\b(y)|}{|x-y|^\a}\right\}
=  \left|\ubar_\b(0)-\ubar_\b\left(\frac{y_\b-x_\b}{r_\b}\right)\right|
$$
and hence by $L_{\mathrm{loc}}^\infty(\Omegabar_\b)$ convergence,
$u_\infty,v_\infty$ are globally $\alpha$--H\"older continuous and in
particular
\begin{equation}\label{pre_Db_unbounded_holder_uinfty}
\max_{x\in \partial B_1(0)\cap \Omegabar_\infty}|u_\infty(0)-u_\infty(x)|=1.
\end{equation}
Now if $\Omega_\infty$ is an half-space we can proceed exactly as in the last part of the proof of Lemma \ref{lem:conv0}, obtaining a contradiction. Therefore $\Omega_\infty=\R^N$, and $(i)$ is proved.

In order to prove the second part of the lemma, let us fix any ball
$B_r(x_0)$ of $\R^N$, and let $\b$ be large so that
$B_r(x_0)\subset\O_\b$. Let us consider a smooth cut-off function $0
\leq \eta \leq 1$ such that $\eta=1$ in $B_r$, $\eta=0$ in
$\R^N \setminus B_{2r}$. Testing the equation for $\ubar_\b$ 
with $\eta$, we obtain (since the $\ubar_\b$'s are uniformly bounded in $B_{2r}$)
\begin{equation}\label{int_Br_b_M_b_ubar_vbar^2_bounded}
\int_{B_r} \b M_\b \ubar_\b \vbar_\b^2 \leq
\int_{B_{2r}} |\ubar_\b \Delta \eta- \lambda_\b r_\b^2 \eta \ubar_\b + \omega_1 M_\b \eta \ubar_\b^3  + \eta \hbar_\b |\leq C
\end{equation}
and analogously $\displaystyle \int_{B_r} \b M_\b \ubar_\b^2 \vbar_\b \leq C$.
This immediately implies that
\begin{equation}\label{eq:pre_disjoint}
u_\infty\cdot v_\infty \equiv 0\quad\text{ in }\R^N,
\end{equation}
providing
\begin{eqnarray}\label{eq:hugo}
\int_{B_r} \b M_\b \ubar_\b^2 \vbar_\b^2 &\leq& \|\ubar_\b\|_{L^\infty(B_r\cap \{u_\infty=0\})}\int_{B_r} \b M_\b \ubar_\b \vbar_\b^2 + \|\vbar_\b\|_{L^\infty(B_r)\cap \{v_\infty=0\}} \int_{B_r}\b M_\b \ubar_\b^2 \vbar \nonumber\\
&\leq & C\left(\|\ubar_\b\|_{L^\infty(B_r\cap \{u_\infty=0\})} + \|\vbar_\b\|_{L^\infty(B_r\cap \{v_\infty=0\})} \right)\rightarrow 0,
\end{eqnarray}
which is $(ii)$.

Finally, integrating the equation for $\ubar_\b$ in $B_r$, we have
\begin{equation}\label{normal_derivative_estimates}
\left|\int_{\partial B_r} \parder{\ubar_\b}{\nu}\,d\sigma\right| \leq
\int_{B_r} \b M_\b \ubar_\b \vbar_\b^2 + \int_{B_r}|\lambda_\b r_\b^2 \ubar_\b -\omega_1 M_\b \ubar_\b^3 - \hbar_\b|\leq C,
\end{equation}
which also gives, testing the equation for $\ubar_\b$ with $\ubar_\b$ itself, $\int_{B_r} |\nabla \ubar_\b|^2\leq C$.
Doing the same with $\vbar_\b$, we obtain the weak $H^1$--convergence $\ubar_\b\rightharpoonup u_\infty$,
$\vbar_\b \rightharpoonup v_\infty$
. Finally by testing the equation for $\ubar_\b$ with $\ubar_\b-u_\infty$ we obtain
\[
\begin{split}
\int_{B_r}\nabla \ubar_\b\cdot \nabla (\ubar_\b-u_\infty)\leq
\|\ubar_\b-u_\infty\|_{L^\infty(B_r)}\cdot&\left(\int_{\partial B_r}\left|\partial_\nu \ubar_\b\right|+\right. \smallskip\\
&\left.+\int_{B_r}|-\lambda_\b r_\b^2 \ubar_\b + \omega_1 M_\b \ubar_\b^3 - \b M_\b \ubar_\b \vbar_\b^2 + \hbar_\b|\right)
\end{split}
\]
and therefore we proved $(iii)$ by uniform convergence and estimates \eqref{normal_derivative_estimates}, \eqref{int_Br_b_M_b_ubar_vbar^2_bounded} (the convergence of $\vbar_\b$ is analogous).
\end{proof}

In the following lemma we collect the properties enjoyed by the limiting states $u_\infty$, $v_\infty$.

\begin{lemma}\label{lem:properties_u_infty}
Let $u_\infty$, $v_\infty$ be defined as in Lemma \ref{lemma_H^1_convergence}. Then the following holds.
\begin{itemize}
\item[(i)] $u_\infty\cdot v_\infty \equiv 0$ in $\R^N$;
\item[(ii)] $\dis\max_{x\in \partial B_1(0)}|u_\infty(0)-u_\infty(x)|=1$ (in particular, $u_\infty$ is not constant);
\item[(iii)] $ \left\{\begin{array}{lll}
  -\D u_\infty &=& 0  \quad \text{ in } \{u_\infty>0\},\\
  -\D v_\infty &=& 0  \quad \text{ in } \{v_\infty>0\}.
\end{array}\right.
$
\end{itemize}
\end{lemma}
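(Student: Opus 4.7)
Parts \emph{(i)} and \emph{(ii)} are essentially already contained in the construction of $u_\infty, v_\infty$. For \emph{(i)}, the argument is exactly the one displayed as \eqref{eq:pre_disjoint} in the proof of Lemma \ref{lemma_H^1_convergence}: the estimate \eqref{int_Br_b_M_b_ubar_vbar^2_bounded} combined with $\b M_\b \to +\infty$ and the uniform convergence $\ubar_\b \to u_\infty$, $\vbar_\b \to v_\infty$ on compacts forces the product to vanish identically. So I would simply refer back to that line rather than redo the computation.

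For \emph{(ii)}, recall that by \eqref{holderquotient_for_ubar, vbar} the normalized functions $\ubar_\b,\vbar_\b$ are $\a$-H\"older with constant at most $1$, and the constant is attained at $0$ and $(y_\b-x_\b)/r_\b$; with the present choice $r_\b=|x_\b-y_\b|$ the latter point lies on $\partial B_1(0)$. Up to a subsequence $(y_\b-x_\b)/r_\b\to a$ with $|a|=1$, and by uniform convergence on compacts (Lemma \ref{lemma_H^1_convergence}(i)) the identity $|\ubar_\b(0)-\ubar_\b((y_\b-x_\b)/r_\b)|=1$ passes to the limit giving $|u_\infty(0)-u_\infty(a)|=1$; meanwhile the global H\"older quotient bound passes to the limit and forbids anything larger, yielding \emph{(ii)}.

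The main point is \emph{(iii)}. I would test the equation for $\ubar_\b$ in \eqref{system_in_Omega_k} with an arbitrary $\phi\in C_c^\infty(\{u_\infty>0\})$, obtaining
\[
\int \nabla\ubar_\b\cdot\nabla\phi\,=\,\int\bigl[-\lb r_\b^2\ubar_\b+\omega_1 M_\b\ubar_\b^3-\b M_\b\ubar_\b\vbar_\b^2+\hbar_\b\bigr]\phi.
\]
By the strong $H^1_{\rm loc}$ convergence in Lemma \ref{lemma_H^1_convergence}(iii) the left-hand side converges to $\int\nabla u_\infty\cdot\nabla\phi$, and by Remark \ref{remark_L^2_estimates_rescaled_functions} the terms $\lb r_\b^2\ubar_\b$, $\omega_1 M_\b\ubar_\b^3$ and $\hbar_\b$ tend to zero in $L^2_{\rm loc}$ and do not contribute. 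The crucial step is to control the competition term on $K:=\supp\phi$: since $K\subset\{u_\infty>0\}$ and $\ubar_\b\to u_\infty$ uniformly on $K$, there exist $c>0$ and $\b_0$ with $\ubar_\b\geq c$ on $K$ for all $\b\geq\b_0$, which gives
\[
\left|\int \b M_\b\ubar_\b\vbar_\b^2\phi\right|\,\leq\,\frac{\|\phi\|_\infty}{c}\int_K \b M_\b\ubar_\b^2\vbar_\b^2.
\]
The right-hand side tends to zero by the very estimate \eqref{eq:hugo} used in Lemma \ref{lemma_H^1_convergence}, since $K\subset\{v_\infty=0\}$. This proves $-\Delta u_\infty=0$ distributionally in $\{u_\infty>0\}$; the symmetric argument using the equation for $\vbar_\b$ gives $-\Delta v_\infty=0$ in $\{v_\infty>0\}$.

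The only mildly delicate issue I anticipate is the clean handling of the competition term: one must resist the temptation to try to show $\b M_\b\ubar_\b\vbar_\b^2\to 0$ pointwise or in $L^1$ on the whole $B_r$ (which is not available), and instead exploit the fact that test functions for \emph{(iii)} are supported strictly inside $\{u_\infty>0\}$, where the positive lower bound on $\ubar_\b$ allows one to upgrade the $\b M_\b\ubar_\b\vbar_\b^2$ integrand into the stronger $\b M_\b\ubar_\b^2\vbar_\b^2$ integrand already controlled by \eqref{eq:hugo}.
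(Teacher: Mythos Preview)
Your argument is correct. Parts (i) and (ii) are handled exactly as in the paper, by referring back to \eqref{eq:pre_disjoint} and \eqref{pre_Db_unbounded_holder_uinfty}.

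For (iii) you take a genuinely different route. The paper proceeds pointwise: given $x_0$ with $u_\infty(x_0)>0$, it picks a ball $B_\delta(x_0)$ on which $\ubar_\b\geq\gamma>0$, feeds the inequality $-\Delta\vbar_\b\leq -\tfrac{\gamma^2}{2}\b M_\b\,\vbar_\b+\kbar_\b$ into the exponential decay Lemma \ref{lemma_exponential_decay}, deduces $\|\b M_\b\ubar_\b\vbar_\b^2\|_{L^2(B_{\delta/2})}\to 0$, and hence $\|\Delta\ubar_\b\|_{L^2(B_{\delta/2})}\to 0$. You instead test against $\phi\in C_c^\infty(\{u_\infty>0\})$ and kill the competition term by the pointwise inequality $\ubar_\b\vbar_\b^2\leq c^{-1}\ubar_\b^2\vbar_\b^2$ on $\supp\phi$, invoking the already established $L^1$ bound \eqref{eq:hugo}. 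This is more economical: it recycles Lemma \ref{lemma_H^1_convergence}(ii)--(iii) and avoids a second appeal to Lemma \ref{lemma_exponential_decay}. The price is that you only get distributional harmonicity (enough for the statement, by Weyl's lemma), whereas the paper's route yields the quantitatively stronger conclusion $\Delta\ubar_\b\to 0$ in $L^2_{\mathrm{loc}}(\{u_\infty>0\})$, which could be of independent use if finer convergence were needed later.
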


\begin{proof}
Properties $(i)$ and $(ii)$ are simply \eqref{eq:pre_disjoint} and
\eqref{pre_Db_unbounded_holder_uinfty}, respectively.
Let us check that $u_\infty$ is harmonic in the (open) set $\{x\in \R^N:\ u_\infty(x)>0\}$ (the same is true for
$v_\infty$ in the set $\{x\in\R^N:\ v_\infty(x)>0\}$). Given any point $x_0$ such that $u_\infty(x_0)>0$, we have to
find a neighborhood of it where $u_\infty$ is harmonic. By continuity we can consider a ball $B_\delta(x_0)$ where
$u_\infty\geq 2 \gamma>0$, and hence by locally $L^\infty$ convergence $\ubar_\b\geq \gamma>0$ in $B_\delta(x_0)$ for large $\b$.
Therefore we have
$$-\Delta \vbar_\b \leq -\b M_\b \frac{\gamma^2}{2}\vbar_\b+\kbar_\b$$
and thus, using Lemma \ref{lemma_exponential_decay}, we obtain
$$\|\vbar_\b\|_{L^2(B_{\delta/2})}\leq C e^{-C' \sqrt{\b M_\b}}+\frac{1}{\b M_\b}\|\kbar_\b\|_{L^2(B_{\delta/2})}.$$
Hence
$$\|\b M_\b \ubar_\b \vbar_\b^2\|_{L^2(B_{\delta/2})}\rightarrow 0$$and, using also Remark \ref{remark_L^2_estimates_rescaled_functions}, we conclude that $\|\Delta \ubar_\b\|_{L^2(B_{\delta/2})}\rightarrow 0$, which implies the harmonicity of $u_\infty$ in $B_{\delta/2}(x_0)$.
\end{proof}

\begin{rem}\label{rem:v_infty=0}
By the previous lemmas we obtain that $u_\infty$ must vanish somewhere in $\R^N$ (indeed if not $u_\infty$ would be a positive non--constant harmonic function in $\R^N$, a contradiction), and also $v_\infty$ must vanish somewhere (otherwise we would have $u_\infty\equiv0$ in $\R^N$, again a contradiction). This, by continuity, implies that $u_\infty$ and $v_\infty$ must have a common zero, thus they satisfy all the assumptions of Proposition \ref{prop_liouville_disjoint_supports}. Since $u_\infty$ is not constant, we deduce that
\[
v_\infty\equiv0\qquad\text{in }\R^N.
\]
Moreover, we have
\[
\{x:\,u_\infty(x)=0\}\neq\emptyset,\quad\text{ and }\quad \{x:\,u_\infty(x)>0\}\text{ is connected.}
\]
This last claim is due to the fact that, was $\{u_\infty>0\}$ non trivially decomposed into
$\O_1\cup\O_2$, then again $u=u_\infty|_{\O_1}$ and $v=u_\infty|_{\O_2}$ would be non--zero and satisfy  the assumptions of Proposition \ref{prop_liouville_disjoint_supports}, a contradiction.
\end{rem}

\subsection{Almgren's Formula}\label{subsec:almgren}

In order to conclude the proof of Theorem \ref{theorem_main} we will show that $u_\infty$ is radially homogeneous; this crucial information will come from a generalization of the Almgren's Monotonicity Formula. This formula was first introduced in \cite{Alm} and used for instance in \cite{caflin,GL} to prove some regularity issues related to free boundary problems. The aim is to study the monotonicity properties of the functions
$$
E(r)=\frac{1}{r^{N-2}}\int_{B_r}\left\{|\nabla u_\infty|^2+|\nabla v_\infty|^2\right\}, \qquad H(r)= \frac{1}{r^{N-1}}\int_{\partial B_r} \left\{u_\infty^2+v_\infty^2\right\},
$$
and of the Almgren's quotient (where it is defined)
$$
N(r)=\frac{E(r)}{H(r)},
$$
where $u_\infty$, $v_\infty$ are defined in Lemma \ref{lemma_H^1_convergence} and $B_r$ is centered at a fixed $x_0$ (with respect to the literature, our definition of $H$ involves the averages of the densities, not of their oscillations). It is worthwhile noticing that the result we prove for $u_\infty$, $v_\infty$ in fact holds for any non trivial, strong $H^1_{\mathrm{loc}}$--limits of variational systems; indeed, we will perform the proof without using all the other properties we collected about $u_\infty$, $v_\infty$. The reason for this is that we will need a similar result, for different functions, in Section \ref{sec:lipsch_limit}.

\begin{prop}\label{prop_almgren_rescaled}
Under the above notations, for every $x_0\in\R^N$ there exists $r_0\geq 0$ such that, for every $r>r_0$, $H(r)\neq 0$, and
\[
N(r)\text{ is an absolutely continuous, non decreasing function}
\]
such that
\begin{equation}\label{derivative_of_H(r)}
\frac{d}{dr} \log (H(r))=\frac{2N(r)}{r}.
\end{equation}
Moreover if $N(r)\equiv \gamma$ for all $r>r_0$, then $r_0=0$ and $u_\infty(x)= r^\gamma g_1(\theta)$,
$v_\infty(x)= r^\gamma g_2(\theta)$ in $\R^N$, for some functions $g_1,g_2$ (where $(r,\theta)$ denote the polar coordinates).
\end{prop}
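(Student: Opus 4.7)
The strategy is to transfer the classical Almgren monotonicity scheme, originally designed for a single harmonic function, to the pair $(u_\infty,v_\infty)$. The key is to derive two integral identities at the $\b$-level (where $\ubar_\b,\vbar_\b$ satisfy the smooth system \eqref{system_in_Omega_k}) and then pass to the limit via Lemma \ref{lemma_H^1_convergence}.

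\textbf{Step 1 (limit identities).} The first target is the Rellich-type identity, valid for a.e.\ $r>0$:
\begin{equation}\label{pp:rellich}
\int_{B_r}(|\nabla u_\infty|^2+|\nabla v_\infty|^2)=\int_{\partial B_r}(u_\infty\partial_\nu u_\infty+v_\infty\partial_\nu v_\infty).
\end{equation}
I would obtain it by testing the equations in \eqref{system_in_Omega_k} against $\ubar_\b,\vbar_\b$ themselves on $B_r$ and summing: cubic and lower-order terms vanish thanks to Remark \ref{remark_L^2_estimates_rescaled_functions}, the interaction term $2\int_{B_r}\b M_\b \ubar_\b^2\vbar_\b^2$ vanishes by Lemma \ref{lemma_H^1_convergence}(ii), and strong $H^1$ convergence (Lemma \ref{lemma_H^1_convergence}(iii)) handles the Dirichlet part. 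The second target is the Pohozaev-type identity
\begin{equation}\label{pp:pohozaev}
(N-2)\int_{B_r}(|\nabla u_\infty|^2+|\nabla v_\infty|^2)=r\int_{\partial B_r}(|\nabla u_\infty|^2+|\nabla v_\infty|^2)-2r\int_{\partial B_r}(|\partial_\nu u_\infty|^2+|\partial_\nu v_\infty|^2),
\end{equation}
which I would derive by testing with the Pohozaev multipliers $(x-x_0)\cdot\nabla\ubar_\b$ and $(x-x_0)\cdot\nabla\vbar_\b$ and summing. Here the gradient structure of the coupling is essential: the two interaction contributions assemble into $\tfrac12\int_{B_r}\b M_\b(x-x_0)\cdot\nabla(\ubar_\b^2\vbar_\b^2)$, which after one further integration by parts is controlled by $\int_{B_r}\b M_\b\ubar_\b^2\vbar_\b^2$ plus a boundary integral on $\partial B_r$; both tend to zero (the boundary term for a.e.\ $r$, by Fubini).

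\textbf{Step 2 (monotonicity).} Differentiating $H$ in polar coordinates yields $H'(r)=2r^{1-N}\int_{\partial B_r}(u_\infty\partial_\nu u_\infty+v_\infty\partial_\nu v_\infty)$, which by \eqref{pp:rellich} equals $2E(r)/r$, proving \eqref{derivative_of_H(r)}. Since $H\geq 0$ and $H'\geq 0$ wherever $H>0$, the positivity set of $H$ is a half-line $(r_0,+\infty)$ (non-empty because $u_\infty$ is non-trivial), and $N=E/H$ is absolutely continuous on $(r_0,+\infty)$. Identity \eqref{pp:pohozaev}, combined with the direct formula for $E'$, gives $E'(r)=2r^{2-N}\int_{\partial B_r}(|\partial_\nu u_\infty|^2+|\partial_\nu v_\infty|^2)$; computing $N'=(E'H-EH')/H^2$ and applying the Cauchy--Schwarz inequality to the vectors $(u_\infty,v_\infty)$ and $(\partial_\nu u_\infty,\partial_\nu v_\infty)$ in $L^2(\partial B_r;\R^2)$ shows $N'(r)\geq 0$.

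\textbf{Step 3 (rigidity and conclusion).} If $N\equiv\gamma$ on $(r_0,+\infty)$, equality in Cauchy--Schwarz at a.e.\ $r$ forces $(\partial_\nu u_\infty,\partial_\nu v_\infty)=\l(r)(u_\infty,v_\infty)$ on $\partial B_r$; matching with $H'/H=2\gamma/r$ identifies $\l(r)=\gamma/r$. Integrating the radial ODE $\partial_r w=(\gamma/r)w$ along each ray yields the homogeneous representation $u_\infty(r,\theta)=r^\gamma g_1(\theta)$, $v_\infty(r,\theta)=r^\gamma g_2(\theta)$ on $(r_0,+\infty)\times S^{N-1}$. If $r_0>0$, continuity across $\partial B_{r_0}$ would force $g_1\equiv g_2\equiv 0$, contradicting $H>0$ on $(r_0,+\infty)$; hence $r_0=0$ and the representation extends globally.

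The main obstacle I anticipate is the careful passage to the limit in \eqref{pp:pohozaev}: without the symmetric gradient structure of the coupling (i.e., the common factor $\b\ubar_\b^2\vbar_\b^2$), the cross terms produced by the two Pohozaev multipliers would not assemble into an expression controllable by Lemma \ref{lemma_H^1_convergence}(ii).
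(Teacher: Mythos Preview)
Your proposal is correct and follows essentially the same route as the paper: work at the $\b$--level where the equations are smooth, derive the Rellich and Pohozaev identities (the paper does this by rescaling rather than by the multiplier method, but the content is identical), and pass to the limit using the strong $H^1_{\mathrm{loc}}$ convergence and the vanishing of $\int\b M_\b\ubar_\b^2\vbar_\b^2$ from Lemma \ref{lemma_H^1_convergence}. Your rigidity step is in fact slightly cleaner than the paper's: you read off $\lambda(r)=\gamma/r$ directly from $H'/H=2\gamma/r$ and integrate the radial ODE, whereas the paper first obtains a separated form $f(r)g_i(\theta)$ and then appeals to the harmonicity of $u_\infty,v_\infty$ on their positivity sets to identify $f(r)=r^\gamma$.
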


\begin{proof}
Up to a translation, we can suppose $B_r=B_r(0)$. We divide the proof into steps.

\textbf{Approximated quotients.} Let $0<r_1<r_2$ be such that $H(r)\neq 0$ in $[r_1,r_2]$ (they exist for sure, since $u_\infty\not\equiv 0$ and it is continuous). Let us check that the conclusions of the proposition follow in this interval (the existence of $r_0$ as claimed will be obtained only later). To evaluate derivatives of $E(r)$, $H(r)$ and $N(r)$ we have to face two main problems: first, it is not clear how regular these functions are; second, we have no global equation for $u_\infty, v_\infty$. To overcome these difficulties, the idea is to consider analogous functions, that will result to be $C^1$, for the approximated problem (\ref{system_in_Omega_k}), and then to pass to the limit as $\beta \to +\infty$. In order to simplify notations we will denote for the moment $u:=\ubar_\b$ and the same for $\vbar_\b, \hbar_\b, \kbar_\b$. We then define the approximated Almgren's quotient
$$
N_\beta(r)=\frac{E_\b(r)}{H_\b(r)},
$$
where
$$ E_\b(r)=\frac{1}{r^{N-2}}\int_{B_r}\left\{ |\nabla u|^2+|\nabla v|^2 + r_\b^2 (\lambda_\b u^2 + \mu_\b v^2)-M_\b (\omega_1 u^4+\omega_2 v^4) + 2 \b M_\b u^2 v^2\right\},
$$
$$
H_\b(r)=\frac{1}{r^{N-1}}\int_{\partial B_r}\left\{u^2+v^2\right\}.
$$
We also observe that, by multiplying system \eqref{system_in_Omega_k} by $(u,v)$ and integrating in $B_r$, we obtain
\begin{equation}\label{E_b_new_formulation}
E_\b(r)=\frac{1}{r^{N-2}}\int_{\partial B_r} \left\{u\,{\partial_\nu}u+v\,{\partial_\nu}v\right\} +
\frac{1}{r^{N-2}}\int_{B_r} \left\{h(x)u+k(x)v \right\}
\end{equation}
(the boundary integrals above, and all the following ones, are well
defined, for $\b$ fixed and for \emph{every} $r$, by Remark
\ref{rem:H^2} and by the continuous immersion of $H^2(B_r)$ into
$H^1(\partial B_r)$).

\textbf{Derivatives of $E_\b$, $H_\b$.}
In order to compute the derivatives of these expressions, we consider the rescaled function $u_r(x):=u(rx)$ and similar expressions for $v,h,k$. System \eqref{system_in_Omega_k} now becomes
\begin{equation}\label{system_for_ur_vr}
\left\{\begin{array}{lll}
-\Delta u_r + r^2 \lambda_\b r_\b^2 u_r   &=& r^2 \omega_1 M_\b u_r^3 - r^2 \b M_\b u_r v_r^2 + r^2 h_r \\
-\Delta v_r + r^2 \mu_\b r_\b^2 v_r       &=& r^2 \omega_2 M_\b v_r^3 - r^2 \b M_\b u_r^2 v_r + r^2 k_r.
\end{array}
\right.
\end{equation}
Performing a change of variables $x=ry$ in $E_\b(r)$ we obtain
$$
E_\b(r)=\int_{B_1}\left\{ |\nabla u_r|^2+|\nabla v_r|^2 +r^2 r_\b^2 (\lambda_\b u_r^2+\mu_\b v_r^2)-r^2 M_\b(\omega_1 u_r^4 + \omega_2 v_r^4)+ 2 r^2 \b M_\b u_r^2 v_r^2\right\},
$$
and hence (Remark \ref{rem:H^2} implies that $E_\b$ is in fact $C^1$ in $r$)
$$
\begin{array}{lll}
E_\b'(r) &=& \displaystyle 2 \int_{B_1} \left\{\nabla u_r \cdot \nabla (\nabla u(rx)\cdot x))+\nabla v_r\cdot \nabla(\nabla v(rx)\cdot x)\right\}+\\
&& \displaystyle + 2r^2\int_{B_1} (r_\b^2 \lambda_\b u_r- 2 M_\b \omega_1 u_r^3 + 2 \b M_\b u_r v_r^2)(\nabla u(rx)\cdot x)+\\
&&\displaystyle + 2r^2 \int_{B_1} (r_\b^2 \mu_\b v_r- 2 M_\b \omega_2 v_r^3 + 2 \b M_\b u_r^2 v_r)(\nabla v(rx)\cdot x)+\\
&&\displaystyle +2r\int_{B_1} \left\{r_\b^2(\lambda_\b u_r^2+\mu_\b v_r^2)- M_\b (\omega_1 u_r^4+\omega_2 v_r^4)+ 2 \b M_\b u_r^2 v_r^2\right\}
.\end{array}
$$
Multiplying the  first equation in \eqref{system_for_ur_vr} by $(\nabla u(r x)\cdot x) $, the second one by $(\nabla v(r x)\cdot x)$, integrating by parts in $B_1$ and substituting the result in the previous expression, it follows:
$$
\begin{array}{lll}
E_\b'(r) &=& \displaystyle \frac{2}{r^{N-2}}\int_{\partial B_r}\left\{\left({\partial_\nu}u\right)^2+\left({\partial_\nu}v\right)^2\right\} + \frac{2}{r^{N-1}}\int_{B_r}\left\{h(x)(\nabla u\cdot x) + k(x)(\nabla v\cdot x)\right\}+\\
&&\displaystyle + \frac{2}{r^{N-1}}\int_{B_r}\left\{r_\b^2(\lambda_\b u^2+\mu_\b v^2)- M_\b (\omega_1 u^4 + \omega_2 v^4) + 2 \b M_\b u^2 v^2 \right\}-\\
&&\displaystyle -\frac{2}{r^{N-1}}\int_{B_r}\left\{ \omega_1 M_\b u^3 (\nabla u \cdot x) + \omega_2 M_\b v^3 (\nabla v \cdot x) \right\}+\\
&&\displaystyle + \frac{2}{r^{N-1}}\int_{B_r}\left\{\b M_\b u (\nabla u\cdot x) v^2 + \b M_\b u^2 v (\nabla v\cdot x)\right\}.
\end{array}
$$
Using the divergence theorem, we can rewrite some terms:
\begin{multline*}
\frac{2}{r^{N-1}}\int_{B_r}\left\{\b M_\b u (\nabla u \cdot x) v^2  + \b M_\b u^2 v (\nabla v \cdot x)\right\}= \frac{\b M_\b}{r^{N-1}}\int_{B_r} \nabla (u^2 v^2)\cdot x=\\
= -\frac{N}{r^{N-1}}  \int_{B_r} \b M_\b u^2 v^2 + \frac{1}{r^{N-2}} \int_{\partial B_r}\b M_\b u^2 v^2;
\end{multline*}
\begin{multline*}
-\frac{2}{r^{N-1}}\int_{B_r}\left\{\omega_1 M_\b u^3(\nabla u\cdot x)+\omega_2 M_\b (\nabla(v)\cdot x)\right\} =\\= -\frac{2}{r^{N-1}}\int_{B_r}\left\{\omega_1 M_\b (\nabla(u^4)\cdot x)+\omega_2 M_\b (\nabla(v^4)\cdot x)\right\}=\\ =\frac{N}{2r^{N-1}}\int_{B_r}\left\{\omega_1 M_\b u^4+\omega_2 M_\b v^4\right\}-\frac{1}{2r^{N-2}}\int_{\partial B_r}\left\{\omega_1 M_\b u^4 + \omega_2 M_\b v^4\right\};
\end{multline*}
obtaining at the end
\[
\begin{split}
E_\b'(r) &=  \frac{2}{r^{N-2}}\int_{\partial B_r}\left\{\left({\partial_\nu}u\right)^2+\left({\partial_\nu}v\right)^2\right\} + \frac{2}{r^{N-1}}\int_{B_r}\left\{h(x)(\nabla u\cdot x) + k(x)(\nabla v\cdot x)\right\}+\\
&
+ \frac{1}{r^{N-1}}\int_{B_r}\left\{2r_\b^2(\lambda_\b u^2+\mu_\b v^2)+\left(\frac{N}{2}-2\right)M_\b(\omega_1 u^4+\omega_2 v^4)+(4-N) \b M_\b u^2 v^2\right\}+\\
&
+\frac{1}{2 r^{N-2}}\int_{\partial B_r}\left\{2\b M_\b u^2 v^2 - \omega_1 M_\b u^4-\omega_2 M_\b v^4\right\}.
\end{split}
\]
Using the same ideas, we also obtain, for the $C^1$ function $H_\b$,
$$
H'_\b(r)=\frac{2}{r^{N-1}}\int_{\partial B_r} \left\{u\,{\partial_\nu}u + v \,{\partial_\nu}v\right\}.
$$

\textbf{Estimate of $N_\b(r+\delta)-N_\b(r)$.} At this point, let us recover the original notations $\ubar_\b, \vbar_\b, \hbar_\b, \kbar_\b$. Recalling equation \eqref{E_b_new_formulation} we can compute $N'_\b$ in $(r_1,r_2)$ as
\begin{multline*}
N_\b'(r) 
= \frac{2}{r^{2N-3} H_\b^2(r)}\left\{ \int_{\partial B_r} \left[\left(\partial_\nu \ubar_\b\right)^2 + \left({\partial_\nu \vbar_\b}\right)^2 \right]\cdot \int_{\partial B_r}(\ubar_\b^2+\vbar_\b^2)\right.-\\
-\left.
\left[\int_{\partial B_r} \left(\ubar_\b {\partial_\nu \ubar_\b}+\vbar_\b {\partial_\nu \vbar_\b}\right)\right]^2\right\} + R_\b(r),
\end{multline*}
where
\[
\begin{split}
R_\b(r) &=
\frac{2}{r^{N-1} H_\b(r)} \int_{B_r} \left\{\hbar_\b(x) (\nabla \ubar_\b\cdot x) +\kbar_\b(x) (\nabla \vbar_\b\cdot x)\right\}+\\
&
\displaystyle + \frac{1}{r^{N-1}H_\b(r)}\int_{B_r}\left\{2 r_\b^2(\lambda_\b \ubar_\b^2+\mu_\b \vbar_\b^2)+\frac{N-4}{2}M_\b (\omega_1 \ubar_\b^4 + \omega_2 \vbar_\b^4)+(4-N)\b M_\b \ubar_\b^2 \vbar_\b^2 \right\}+ \\
&
+\frac{1}{2r^{N-2}H_\b(r)}\int_{\partial B_r}\left\{2\b M_\b \ubar_\b^2 \vbar_\b^2-\omega_1 M_\b \ubar_\b^4-\omega_2 M_\b^4\right\}-\\
&
-\frac{2}{r^{2N-3}H_\b^2(r)}\int_{\partial B_r}\left\{\ubar_\b\, {\partial_\nu} \ubar_\b +\vbar_\b\,
{\partial_\nu} \vbar_\b\right\}\int_{B_r}\left\{ \hbar_\b(x) \ubar_\b+ \kbar_\b(x) \vbar_\b\right\}.
\end{split}
\]
Notice that, since $H_\b(r) \neq 0$, for every $\delta>0$ such that $r,r+\delta \in (r_1,r_2)$, there exists a constant $C>0$ depending only on $r_1, r_2$ and $\delta$ such that
\begin{eqnarray*}
\int_r^{r+\delta} |R_\b(s)|ds &\leq&
C \int_{B_{r_2}}\{|\hbar_\b| |\nabla \ubar_\b| + |\kbar_\b| |\nabla \vbar_\b| + r_\b^2 (\ubar_\b^2+\vbar_\b^2)+ M_\b (\ubar_\b^4 + \vbar_\b^4) + \\
&& + \b M_\b \ubar_\b^2 \vbar_\b^2 + |\omega_1|M_\b \ubar_\b^3 |\nabla \ubar_\b| + |\omega_2| M_\b \vbar_\b^3 |\nabla \vbar_\b|\}+\\
& &+\|M_\b \ubar_\b^4+M_\b\vbar_\b^4\|_{L^\infty(B_r)}  + C  \int_{B_{r_2}}\left\{ |\hbar_\b| |\ubar_\b| + |\kbar_\b| |\vbar_\b|\right\}  \longrightarrow 0
\end{eqnarray*}
as $\beta\rightarrow +\infty$, where we used Remark \ref{remark_L^2_estimates_rescaled_functions}, Lemma \ref{lemma_H^1_convergence}, $(iii)$ and \eqref{normal_derivative_estimates}.
Therefore,
\begin{eqnarray}
N_\b(r+\delta)-N_\b(r) &=& \int_r^{r+\delta}\frac{2}{s^{2N-3} H_\b^2(s)}\left\{ \int_{\partial B_s} \left[\left(\partial_\nu \ubar_\b\right)^2 + \left({\partial_\nu \vbar_\b}\right)^2 \right]\cdot \int_{\partial B_s}(\ubar_\b^2+\vbar_\b^2)\right.-\nonumber\\
&&-\left.
\left[\int_{\partial B_s} \left(\ubar_\b {\partial_\nu \ubar_\b}+\vbar_\b {\partial_\nu \vbar_\b}\right)\right]^2\right\}+o_\b(1).\nonumber
\end{eqnarray}

\textbf{Derivatives of $N$, $H$, $E$, $\log H$.} Now we are in a position to pass to the limit in $\b$. Indeed, Lemma \ref{lemma_H^1_convergence}, $(iii)$ (that is, strong convergence) ensures that $N_\b(r)\rightarrow N(r)$ for every $r$. Moreover it implies the existence of a function $f(\rho)\in L^1(r_1,r_2)$ such that, up to a subsequence, $\int_{\partial B_\rho}|\nabla \ubar_\b|^2\leq f(\rho)$ and $\int_{\partial B_\rho}|\nabla \ubar_\b|^2\rightarrow \int_{\partial B_\rho} |\nabla u_\infty|^2$ a.e. for $\rho \in (r_1,r_2)$ (and analogously for $\vbar_\b$). Hence, letting $\b\rightarrow +\infty$ in the previous equation we readily obtain that $N$ is absolutely continuous and that (for almost every $r$)
\begin{eqnarray}\label{N'(r)=...}
N'(r) &=& \frac{2}{r^{2N-3} H^2(r)}\left\{\int_{\partial B_r} \left[\left(\partial_\nu u_\infty\right)^2 + \left(\partial_\nu v_\infty\right)^2\right] \cdot \int_{\partial B_r}(u_\infty^2 +v_\infty^2) - \right. \nonumber \\ && - \left.\left[\int_{\partial B_r}\left( u_\infty \partial_\nu u_\infty + v_\infty \partial_\nu v_\infty\right)  \right]^2   \right\}\geq 0,
\end{eqnarray}
by H\"older inequality. This implies that $N(r)$ is increasing in $[r_1,r_2]$ and in addition gives an explicit expression for the derivative. Reasoning as above, we can conclude that
\begin{eqnarray*}
H'(r) &=& 
\frac{2}{r^{N-1}}\int_{\partial B_r} \left\{ u_\infty\,{\partial_\nu} u_\infty + v_\infty {\partial_\nu} v_\infty\right\},\\
H(r)&=&\lim_{\b\rightarrow +\infty} H_\b(r) = \frac{1}{r^{N-1}}\int_{\partial B_r} \left\{u_\infty^2 + v_\infty^2\right\},\\
E(r)&=&\lim_{\b\rightarrow +\infty} E_\b(r)= \frac{1}{r^{N-2}}\int_{\partial B_r} \left\{ u_\infty\,{\partial_\nu} u_\infty + v_\infty {\partial_\nu} v_\infty\right\},
\end{eqnarray*}
(where we used \eqref{E_b_new_formulation} to obtain the last limit) and therefore a direct calculation gives \eqref{derivative_of_H(r)} for $r\in (r_1,r_2)$. Incidentally, we observe that equation \eqref{derivative_of_H(r)} implies that $\log H$, and hence $H$, are $C^1$--functions.

\textbf{Existence of $r_0$.} Equality \eqref{derivative_of_H(r)} also implies that when $H(r)>0$, then $H'(r)\geq 0$ and therefore there exists $r_0:=\inf\left\{r>0:\ H(r)\neq 0\right\}$ such that $H(r)\neq 0$ for every $r>r_0$. Hence everything we have done so far is true in $(r_0,+\infty)$.

\textbf{Case $N(r)$ constant.} Let us now analyze what happens when $N(r)\equiv \gamma$ for all $r>r_0$. By \eqref{derivative_of_H(r)} we have
$$
\frac{d}{dr} \log(H(r))=\frac{2\gamma}{r} = \frac{d}{dr} \log (r^{2\gamma})
$$
for all $r>r_0$. By considering $\bar{r}>r>r_0$ and integrating the previous equality between $r$ and $\bar{r}$ we have
$$
H(r)=H(\bar{r})\left(\frac{r}{\bar{r}}\right)^{2\gamma}.
$$
Now, if $H(r_0)=0$, since $H$ is continuous, the previous equation implies that $r_0=0$; on the other hand, if $H(r_0)\neq0$, then $r_0=0$  by definition. Moreover, by \eqref{N'(r)=...},
if $N(r)$ is constant then there exists $C(r)$ such that $({\partial_\nu u_\infty},{\partial_\nu v_\infty})=C(r) (u_\infty,v_\infty)$, which gives $u_\infty=f(r)g_1(\theta)$, $v_\infty=f(r)g_2(\theta)$, with $f(r)>0$ for $r>0$. Now since $u_\infty$ is harmonic in $\left\{u_\infty>0\right\}$ and $v_\infty$ is harmonic in $\left\{v_\infty>0\right\}$, we finally infer $u_\infty(x)=r^\gamma g_1(\theta)$, $v_\infty(x)= r^\gamma g_2(\theta)$ in
$\R^N$.
\end{proof}

\begin{rem}\label{rem:almgren_k}
Starting from system \eqref{system_premain_k}, one can perform the same blow--up argument than above, obtaining in particular that, for the limiting states $(u_{1,\infty},\dots,u_{k,\infty})$, a result analogous to Proposition \ref{prop_almgren_rescaled} holds, with the choice
\[
E(r)=\frac{1}{r^{N-2}}\int_{B_r}\sum_{i=1}^k|\nabla u_{i,\infty}|^2, \qquad H(r)= \frac{1}{r^{N-1}}\int_{\partial B_r} \sum_{i=1}^k u_{i,\infty}^2.
\]
\end{rem}

\subsection{Proof of the main results.}

\begin{proof}[End of the proof of Theorem \ref{theorem_main}]
By Lemma \ref{lemma_H^1_convergence} we know that the blow--up limiting profiles $u_\infty$ and $v_\infty$ are globally $\alpha$--H\"older continuous. Moreover, by Remark \ref{rem:v_infty=0}, $v_\infty\equiv0$ and we can choose 
\[
x_0\text{ such that }u(x_0)=0.
\]
If $N(r)$ is defined as in Proposition \ref{prop_almgren_rescaled}, then we claim that $N(r)\equiv \alpha$ for all $r>r_0$. Indeed, according to that proposition, assume that there exists $\bar{r}>r_0$ such that $N(\bar{r})\leq \alpha-\varepsilon$. Then by monotonicity, for all $r_0<r<\bar{r}$ we have  $N(r)\leq \alpha-\varepsilon$ and
$$
\frac{d}{dr}\log(H(r))\leq \frac{2}{r}(\alpha-\varepsilon),
$$
hence (integrating between $r$ and $\bar{r}$) we have $Cr^{2\alpha-2\varepsilon}\leq H(r)$ for all $r_0<r<\bar{r}$. On the other hand, by the $\a$--H\"older continuity and the fact that $u_\infty(x_0)=v_\infty(x_0)=0$, we also have $H(r)\leq C'(r-r_0)^{2\alpha}$, a contradiction. On the other hand, if $N(\bar{r})\geq \alpha+\varepsilon$, then by monotonicity $N(r)\geq \alpha+\varepsilon$ for all $r>\bar{r}$, and thus
$$
\frac{d}{dr}\log H(r)\geq \frac{2}{r}(\alpha+\varepsilon),
$$
which implies (integrating between $\bar{r}$ and $r$  and again by the $\a$--H\"older continuity) that $C r^{2\alpha+2\varepsilon}\leq H(r)\leq C' r^{2\alpha}$ for large $r$, a contradiction.

Therefore $N(r)\equiv \alpha$ for all $r>r_0$, and by the previous proposition we know that $r_0=0$ and $u_\infty(x)=r^\alpha g_1(\theta)$. This implies that the null set $\G=\{u_\infty=0\}$ is a cone with respect to $x_0$. Since this can be done for any $x_0\in\G$, we obtain that $\G$ is in fact a cone with respect to each of its points, and thus it is a linear subspace of $\R^N$. Moreover, again by Remark \ref{rem:v_infty=0}, $\G$ has dimension strictly smaller than $N-1$, otherwise $\{u_\infty>0\}$ would be disconnected. But then
$u_\infty$ turns out to be a non--negative, non--constant function in $H^1_{\mathrm{loc}}(\R^N)$, which is harmonic on the complement of a set of zero (local) capacity. That is, it is harmonic on the whole $\R^N$, a contradiction.
\end{proof}
\begin{proof}[Proof of Theorem \ref{theorem_uniform_convergence}] By Theorem \ref{theorem_main}, for every $\a<\a'<\a^*$ there exists a constant $C>0$ such that $\|(\ub,v_\b)\|_{C^{0,\a'}} \leq C$, for every $\b>0$. 
By compact embedding, we obtain, up to a subsequence, the existence of $(u,v)\in C^{0,\a'}$ that are strong $C^{0,\a}$--limits of $(u_\b,v_\b)$. By uniqueness of the limit, this proves that
\[
(u_\b,v_\b) \to (u,v) \quad\text{in }C^{0,\a}(\Omegabar)\text{ for every }\a<\a^*.
\]
To obtain the other claims of the theorem, we reason as in the proof of Lemma \ref{lemma_H^1_convergence}.
Testing system \eqref{system_main} with $(u_\b,v_\b)$ we obtain
\[
\int_{\O} |\nabla\ub|^2 \leq \int_\O \left(-\lb \ub + \o_1 \ub^3 +\hb\right)u_\b,
\]
and an analogous inequality for $\vb$. By uniform convergence, the right hand side is bounded and then $(u_\b,v_\b)$ is bounded in $H^1_0$. Thus, again up to a subsequence, we have
\[
(u_\b,v_\b) \rightharpoonup (u,v) \quad\text{weakly in }H^1_0(\O).
\]
On the other hand, integrating system \eqref{system_main} we have
\[
-\int_{\partial\O} \partial_\nu\ub +\b\int_\O  \ub \vb^2= \int_\O \left(-\lb \ub + \o_1 \ub^3 +\hb\right).
\]
Again, the right hand side is bounded and, by Hopf lemma, $\partial_\nu\ub<0$ on $\partial\O$. We infer
\[
\b\int_\O  \ub \vb^2\leq C,\qquad\b\int_\O  \ub^2 \vb\leq C
\]
not depending on $\b$. This immediately provides $u\cdot v\equiv 0$ almost everywhere in $\O$, and, in turn, reasoning as \eqref{eq:hugo},
\[
\b\int_\O  \ub^2 \vb^2\to 0\quad\text{as }\b\to+\infty,
\]
that completes the proof of $(ii)$. Now we can test system \eqref{system_main} with $(u_\b-u,v_\b-v)$, obtaining
\[
\int_{\O} \nabla\ub\cdot\nabla(u_\b-u) \leq \|u_\b-u\|_{L^\infty}\int_\O \left(-\lb \ub + \o_1 \ub^3 - \ub \vb^2 +\hb\right),
\]
and the same for $v$. By uniform convergence we infer convergence in norm, and hence strong $H^1_0$--convergence of $(u_\b,v_\b)$ to $(u,v)$, and also $(i)$ is proved. Finally, to prove $(iii)$, we observe that, by continuity of the limiting profile, we know that $\{u>0\}$ is an open set. Therefore, given $\xo\in\{u>0\}$, there exists $B_\d(\xo)$ such that $u\geq2\g>0$ in $B_\d(\xo)$, for some positive constant $\g$. Let us show that the equation is satisfied in this open neighborhood. By $(i)$ there holds $\ub \geq \g$ in $B_\d(\xo)$ for large $\b$, therefore
$$
\int_{B_\d(\xo)}\b\ub\vb^2 \leq \frac{1}{\g}\int_{B_\d(\xo)}\b\ub^2\vb^2 \to 0,
$$
because of $(ii)$. By testing the equation with a test function $\phi \in C_0^1(B_\d(\xo))$ we obtain
$$
\int_{B_\d(\xo)}(\nabla\ub\cdot\nabla\phi + \lb\ub\phi)=\int_{B_\d(\xo)}(\o_1\ub^3-\b\ub\vb^2+\hb)\phi,
$$
and the previous estimate together with the $H^1$--convergence conclude the proof.
\end{proof}
\begin{proof}[Proof of Theorems \ref{theorem_premain} and \ref{theorem_lipschitz}]
As we just noticed, with one small change in the previous arguments
one can prove also these two theorems, except for the Lipschitz
continuity of the limiting profile $(u,v)$, which will be the object
of the following section. In dimension $N=2$, since $\a^*=1$, then
the theorems follow directly from Theorems \ref{theorem_main} and
\ref{theorem_uniform_convergence}. In dimension $N=3$, according to
Remark \ref{rem:H^2}, if $h_\b\equiv k_\b\equiv0$ then we can choose
$\a^*=1$ and repeat, as they are, all the arguments in this section. Then Theorem
\ref{theorem_premain} straightly follows, while the proof of Theorem \ref{theorem_lipschitz}
will be completed by Proposition \ref{prop:lip} and Remark \ref{rem:lip_boundry_2} below.
\end{proof}
\begin{rem}\label{rem:k}
With exactly the same strategy it is also possible to prove
analogous results for $L^\infty$--bounded, positive solutions of
system \eqref{system_premain_k}. The only differences are pointed
out in Proposition \ref{prop:liouville_k} and in Remark
\ref{rem:almgren_k}.
\end{rem}

\section{Lipschitz continuity of the limiting
profile}\label{sec:lipsch_limit}

Throughout all this section, let $(u,v)\in C^{0,\a}\cap H^1_0$ denote the limiting profile
introduced in Theorem \ref{theorem_uniform_convergence}, and $h_\b,k_\b\equiv 0$ (that is, we are dealing with system
\eqref{system_premain}). As we noticed, in this case the uniform
H\"older continuity result holds for every $\a \in (0,1)$ also if
$N=3$. In such a situation, although we are not able to prove uniform Lipschitz
continuity of the solutions with respect to $\b$ (see Remark \ref{rem:not_able}), one can prove that
the limiting profile is in fact Lipschitz continuous. To be more
precise, we will first give the details of the proof of the local
Lipschitz continuity of $(u,v)$, and then we will advise (in Remarks
\ref{rem:lip_boundry_1} and \ref{rem:lip_boundry_2}) how this proof
can be modified in order to obtain the Lipschitz regularity up to
the boundary of $\O$. Also, after Remarks \ref{rem:almgren_k} and
\ref{rem:k}, the reader will easily see how this result holds true for $k$--tuples of densities that are solutions of system \eqref{system_premain_k}.

Let us fix a (regular) domain $\Omegatilde\subset\subset \Omega$, and let us
define the null set
\[
\Gamma=\{x\in\Omegatilde: u(x)=v(x)=0\}\neq\Omegatilde
\]
(from now on, we will exclude the trivial case $(u,v)\equiv(0,0)$ in $\Omegatilde$,
which obviously enjoys Lipschitz continuity).
\begin{prop}\label{prop:lip}
Let $(u,v)$ be the limiting profile introduced in Theorem
\ref{theorem_uniform_convergence}, $h_\b\equiv k_\b\equiv0$ and $\Omegatilde\subset\subset \Omega$.
Then $(u,v)\in W^{1,\infty}(\Omegatilde)$.
\end{prop}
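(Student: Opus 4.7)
The plan is to split $\Omegatilde$ into $\Omegatilde\setminus\Gamma$ and a neighbourhood of $\Gamma$: the former is handled by classical elliptic theory, while the latter requires a quadratic growth estimate at $\Gamma$ obtained via Almgren's formula, a blow-up, and the Alt--Caffarelli--Friedman inequality. On $\Omegatilde\setminus\Gamma$, the condition $uv\equiv 0$ forces one component to vanish identically on a neighbourhood of each such point, so the surviving component solves the smooth semilinear equation from~\eqref{eq:limit_sys} and is locally $C^{1,\alpha}$; the only issue is to control the gradient uniformly as one approaches $\Gamma$.

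At each $x_0\in\Gamma$ I would apply Proposition~\ref{prop_almgren_rescaled} to $(u,v)$ itself (allowed by the remark in the statement of that proposition, since $(u,v)$ is a nontrivial strong $H^1_{\mathrm{loc}}$--limit of the variational system), producing a non-decreasing Almgren quotient $N_{x_0}(r)$ satisfying $(\log H_{x_0})'(r)=2N_{x_0}(r)/r$. The core claim is
\[
N_{x_0}(r)\geq 1\qquad\text{for all }r>0\text{ sufficiently small.}
\]
Granted this claim, integration yields $H_{x_0}(r)\leq C r^2$, and a mean-value / De~Giorgi estimate for the bounded subsolutions $u,v$ upgrades this to $\sup_{B_r(x_0)}(u+v)\leq Cr$. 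For any $y\in\Omegatilde\setminus\Gamma$, setting $d:=\mathrm{dist}(y,\Gamma)$ and applying interior $C^1$--estimates on $B_{d/2}(y)$ inside the smooth positivity region, together with the bound $\sup_{B_d(y)}u\leq 2Cd$ obtained at the nearest $x_0\in\Gamma$, one obtains $|\nabla u(y)|+|\nabla v(y)|\leq C$ uniformly in $y$. This gives $(u,v)\in W^{1,\infty}(\Omegatilde)$.

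To prove the core claim I would argue by contradiction. Assuming $\gamma:=\lim_{r\to 0^+}N_{x_0}(r)<1$, consider the normalized blow-ups $(U_t,V_t)(x):=H_{x_0}(t)^{-1/2}(u,v)(x_0+tx)$. By construction $\int_{\partial B_1}(U_t^2+V_t^2)=1$ and the rescaled Almgren quotient $N^{(U_t,V_t)}(r)=N_{x_0}(tr)$ is uniformly bounded, yielding uniform $H^1_{\mathrm{loc}}$--bounds. Extracting a limit $(U,V)$ one obtains a nontrivial, nonnegative pair with $UV\equiv 0$, each harmonic on its positivity set (since the scaled $\omega_i u^3$ and $\lambda u$ terms vanish), and with constant frequency $\gamma$; by the rigidity part of Proposition~\ref{prop_almgren_rescaled} the limit has the form $(r^\gamma g_1(\theta),r^\gamma g_2(\theta))$. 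Now apply Lemma~\ref{lemma_monotonicity_formula_disjoint_supports} to $(U,V)$: a direct polar-coordinate computation gives $J(r)=C\,r^{4\gamma-4}$, and monotonicity of $J$ forces $\gamma\geq 1$ as soon as both $g_i$'s are nontrivial -- a contradiction. The hard part, which I expect to be the main obstacle, is ruling out the degenerate blow-up where one of the components (say $g_2$) is identically zero: there $U=r^\gamma g_1(\theta)$ is a nonnegative homogeneous harmonic function on its positivity cone with $U(0)=0$, and one must combine the spherical-eigenvalue identity $\gamma(\gamma+N-2)=\lambda_1(\{g_1>0\})$ with the ACF characteristic-function inequality~\eqref{characteristic_function_inequality} applied to $\{g_1>0\}$ against its complement on $S^{N-1}$ to again force $\gamma\geq 1$.
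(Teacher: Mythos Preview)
Your overall architecture—Almgren monotonicity at points of $\Gamma$, the quadratic decay $H_{x_0}(r)\leq Cr^2$, and then interior gradient estimates scaled by the distance to $\Gamma$—is the same as the paper's. The substantive difference, and the place where your argument has a genuine gap, is the proof of the frequency lower bound $N_{x_0}(0^+)\geq 1$.

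In the degenerate blow-up case $V\equiv 0$, you propose applying \eqref{characteristic_function_inequality} with $E_1=\{g_1>0\}$ and $E_2=S^{N-1}\setminus\overline{E_1}$. This yields $\gamma+\gamma(\lambda_1(E_2))\geq 2$, but you have no control over $\lambda_1(E_2)$: nothing prevents $E_1$ from being strictly larger than a hemisphere (so $\gamma<1$) while $E_2$ is strictly smaller (so $\gamma(\lambda_1(E_2))>1$), and the inequality is then satisfied without contradiction. Concretely, $U(r,\theta)=r^{1/2}\cos(\theta/2)$ on the slit plane in $\R^2$ is nonnegative, subharmonic, harmonic where positive, vanishes at the origin, and has frequency $1/2$; none of the tools you invoke rules it out as a one-component blow-up. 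To close this case you would need an extra constraint on the limit—essentially a free-boundary reflection law forcing $U-V$ to be globally harmonic—which the paper never establishes and which does not follow from what you have written.

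The paper bypasses the blow-up entirely. Since Theorem~\ref{theorem_premain} already gives $(u,v)\in C^{0,\alpha}$ for every $\alpha<1$, one has $H_{x_0}(r)\leq C_\alpha r^{2\alpha}$ directly; were $N_{x_0}(0^+)<1$, integrating $(\log H)'=2(N-1)/r$ would force $H_{x_0}(r)\geq c\,r^{2(1-\varepsilon)}$ near $0$, contradicting the previous bound once $\alpha>1-\varepsilon$. This is Lemma~\ref{rem:alm_fin_1} (stated there in the shifted normalization $N(0^+)\geq 2$), and it is a two-line argument. Two smaller points: Proposition~\ref{prop_almgren_rescaled} as stated concerns the rescaled limits with vanishing lower-order terms; for $(u,v)$ itself the $\lambda u$ and $\omega_1 u^3$ terms survive, and the paper sets up the modified, only almost-monotone quotient of Proposition~\ref{prop_almgren_interior}. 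Also, the uniformity of the constants in $x_0\in\overline\Gamma$ (which you need for the final gradient bound) requires the compactness observation in Remark~\ref{rem:alm_fin_2}.
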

Again, in order to prove the proposition, the main tool will be
the Almgren's Monotonicity Formula introduced in Section
\ref{subsec:almgren}, with some small change in its definition. In
fact, due to the fact that the limiting profiles satisfy system
\eqref{eq:limit_sys}, the natural definitions for $E(r)$ and $H(r)$
are
\begin{eqnarray*}
E(r)=E_{x_0}(r)&=&\frac{1}{r^{N-2}} \int_{B_r(\xo)} \left( |\nabla u|^2 + |\nabla v|^2 +\l u^2+\mu v^2-\o_1 u^4 -\o_2 v^4\right),\\
H(r)=H_{x_0}(r)&=&\frac{1}{r^{N-1}} \int_{\partial B_r(\xo)}(u^2+v^2),
\end{eqnarray*}
where, here and in the following,
\[
x_0\in \overline{\Gamma}\quad\text{ and }\quad r<\rbar_1:=\mathrm{dist}(\Omegatilde,\partial\O).
\]
In this setting, we have that $E(r)/H(r)$ is no longer necessarily
positive. To overcome this fact, we define a modified Almgren's
quotient as
\begin{equation*}
N(r)= \frac{E(r)}{H(r)}+1=\frac{E(r)+H(r)}{H(r)}.
\end{equation*}
With this choice, $N$ turns out to be non negative (where it is
defined).

\begin{lemma}\label{lem:minoration_for_E+H}
There exists $\rbar_2<\rbar_1$ such that for every $0<r\leq\rbar_2$ and for every $x_0$ we have
\begin{equation*}
E(r)+H(r) \geq \frac{1}{2}\left[\frac{1}{r^{N-2}} \int_{B_r(x_0)}(|\nabla u|^2+|\nabla v|^2)+
\frac{1}{r^{N-1}}\int_{\partial B_r(x_0)}(u^2 +v^2) \right]\geq 0.
\end{equation*}
\end{lemma}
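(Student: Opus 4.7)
The idea is that $E(r)+H(r)$ differs from the right-hand side only by the lower-order integral
\[
I(r):=\frac{1}{r^{N-2}}\int_{B_r(\xo)}\bigl(\l u^2+\mu v^2-\o_1 u^4-\o_2 v^4\bigr),
\]
and this term is of higher order in $r$, so it can be absorbed into half of the good quantity once $r$ is small. First I would use that $(u,v)$ is the uniform limit of the $L^\infty$-bounded family $(\ub,\vb)$, hence $u,v$ are themselves bounded on $\O$; combined with the boundedness of $\l,\mu,\o_1,\o_2$, this gives a constant $C_0>0$ (independent of $\xo$ and $r$) such that
\[
|\l u^2+\mu v^2-\o_1 u^4-\o_2 v^4|\leq C_0\bigl(u^2+v^2\bigr)\quad\text{in }\Omegabar.
\]

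The second ingredient is a scaled Poincar\'e-trace inequality: for any $w\in H^1(B_1)$,
\[
\int_{B_1}w^2\leq C_1\!\left(\int_{B_1}|\nabla w|^2+\int_{\partial B_1}w^2\right),
\]
with a universal constant $C_1$ (this is the equivalence of norms on $H^1(B_1)$). Rescaling $w(x)=\tilde w((x-\xo)/r)$ and tracking the powers of $r$ gives
\[
\frac{1}{r^{N-2}}\int_{B_r(\xo)}w^2\leq C_1\,r^2\!\left(\frac{1}{r^{N-2}}\int_{B_r(\xo)}|\nabla w|^2+\frac{1}{r^{N-1}}\int_{\partial B_r(\xo)}w^2\right),
\]
uniformly in $\xo$, which is exactly the scaling needed.

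Applying this to $w=u$ and $w=v$ and summing, I obtain
\[
|I(r)|\leq C_0 C_1\,r^2\!\left(\frac{1}{r^{N-2}}\int_{B_r(\xo)}(|\nabla u|^2+|\nabla v|^2)+\frac{1}{r^{N-1}}\int_{\partial B_r(\xo)}(u^2+v^2)\right).
\]
Choosing $\rbar_2<\rbar_1$ so small that $C_0 C_1\,\rbar_2^{\,2}\leq \tfrac12$ (note $\rbar_2$ depends only on the $L^\infty$-bound of $(u,v)$ and the constants $\l,\mu,\o_1,\o_2$, not on $\xo$) and rewriting $E(r)+H(r)$ as the right-hand side of the desired inequality plus $I(r)$, the displayed estimate follows. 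There is no real obstacle here; the only point to check carefully is that the constant $C_1$ (and hence $\rbar_2$) can be chosen independently of the centre $\xo$, which is automatic from the translation invariance of the rescaling argument.
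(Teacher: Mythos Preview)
Your proof is correct and follows essentially the same approach as the paper: bound the lower-order term $I(r)$ by $C_0\,r^{-(N-2)}\int_{B_r}(u^2+v^2)$ using the $L^\infty$--bound on $(u,v)$, then apply a scaled Poincar\'e/trace inequality to absorb it into half of the good quantity for $r$ small. The only cosmetic difference is that the paper states the Poincar\'e inequality directly in scaled form with the explicit constant $\tfrac{1}{N-1}$, whereas you derive it by rescaling from the unit ball with a generic constant $C_1$; the argument and its conclusion are the same.
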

\begin{proof}
We shall make use of the following formulation of Poincar\'e's
inequality: for every $w\in H^1_{\mathrm{loc}}(\R^N)$, every $x_0$ and every $r>0$ there
holds
$$
\frac{1}{r^N}\int_{B_r}w^2 \leq \frac{1}{N-1}\left[ \frac{1}{r^{N-2}}\int_{B_r}|\nabla w|^2+\frac{1}{r^{N-1}}\int_{\partial B_r}w^2 \right].
$$
Recalling that $u,v\in L^\infty(\Omega)$, let now $C>0$ be such that
$$
\left|\frac{1}{r^{N}} \int_{B_r} \l u^2+\mu v^2-\o_1 u^4 -\o_2 v^4\right| \leq \frac{C}{r^N}\int_{B_r}(u^2+v^2)
$$
(hence $C$ depends on $u$, $v$, $\o_i$, $\l$, $\mu$, but not on $x_0$ and $r$). Then
$$
E(r)+H(r)\geq \frac{1}{r^{N-2}} \int_{B_r}(|\nabla u|^2+|\nabla v|^2)+\frac{1}{r^{N-1}}\int_{\partial B_r}(u^2 +v^2)
-r^2\frac{C}{r^N}\int_{B_r}(u^2+v^2),
$$
and Poincar\'e's inequality immediately implies that, for $r\leq\rbar_2$ sufficiently small (independent of the choice of $\xo$), the lemma holds.
\end{proof}

Now, with the new notations of this section, let us present a result which corresponds to Proposition \ref{prop_almgren_rescaled} in this context.

\begin{prop}\label{prop_almgren_interior}
There exist $\rbar\leq\rbar_2$ and $C>0$ such that, for every $x_0\in \overline{\Gamma}$ and $0<r\leq\rbar$, we have $H(r)\neq 0$,
$$
N'(r)\geq -2C r N(r),\qquad\text{and thus }\Ntilde(r):=e^{Cr^2}N(r)\text{ is non decreasing.}
$$
Moreover
\begin{equation}\label{derivative_H(r)_Ntilde}
\frac{d}{dr} \log (H(r))=\frac{2}{r}(N(r)-1).
\end{equation}
\end{prop}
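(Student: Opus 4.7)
The plan is to adapt the Almgren monotonicity argument of Proposition \ref{prop_almgren_rescaled} to the limiting profile $(u,v)$, now accounting for the additional lower-order terms appearing in the limit system \eqref{eq:limit_sys} (namely the $\l u^2$, $\mu v^2$, $\o_1 u^4$ and $\o_2 v^4$ contributions). All the integration-by-parts identities below can be obtained either by testing the limit equations directly (they hold weakly in each positivity set and combine on the whole $B_r(x_0)$ using $u\cdot v\equiv 0$) or, more safely, by testing the $\b$--approximated equations and passing to the limit via the strong $H^1$--convergence of Theorem \ref{theorem_uniform_convergence}(i), using that $\b\int_\O u_\b^2 v_\b^2\to 0$.

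A direct change of variables gives $H'(r) = 2r^{-(N-1)}\int_{\partial B_r}(u\,\partial_\nu u + v\,\partial_\nu v)$, and testing \eqref{eq:limit_sys} by $u$ and $v$ on $B_r$ yields $\int_{\partial B_r}(u\,\partial_\nu u + v\,\partial_\nu v) = r^{N-2} E(r)$. Combining, $H'(r)=2E(r)/r$, which is exactly the log-derivative formula \eqref{derivative_H(r)_Ntilde}. Next, the standard Pohozaev identity, obtained by testing the limit equations against $(x-x_0)\cdot\nabla u$ and $(x-x_0)\cdot\nabla v$ respectively and summing, gives after routine algebra
\[
E'(r) = \frac{2}{r^{N-2}}\int_{\partial B_r}\bigl((\partial_\nu u)^2 + (\partial_\nu v)^2\bigr) + P(r),
\]
where $P(r)$ collects volume and boundary integrals of $L:=\l u^2+\mu v^2$ and $W:=\o_1 u^4+\o_2 v^4$. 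Plugging this into $N'=(E'H-EH')/H^2$ and using $H'=2E/r$ exactly as in the proof of Proposition \ref{prop_almgren_rescaled} yields
\[
N'(r)\,H(r)^2 = \frac{2}{r^{2N-3}}\biggl\{\int_{\partial B_r}\bigl((\partial_\nu u)^2+(\partial_\nu v)^2\bigr)\int_{\partial B_r}(u^2+v^2) - \Bigl(\int_{\partial B_r}(u\,\partial_\nu u + v\,\partial_\nu v)\Bigr)^2\biggr\} + P(r)\,H(r),
\]
and the curly bracket is non-negative by Cauchy--Schwarz.

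The heart of the proof is showing $|P(r)|\leq Cr\,N(r)\,H(r)$, uniformly in $x_0$. Since $(u,v)\in L^\infty$, we have $|\int_{B_r} L|+|\int_{B_r} W|\leq C\int_{B_r}(u^2+v^2)$ and $|\int_{\partial B_r} L|+|\int_{\partial B_r} W|\leq C\,r^{N-1}H(r)$. The scaled boundary Poincar\'e inequality $\int_{B_r} w^2\leq C\bigl(r^2\int_{B_r}|\nabla w|^2 + r\int_{\partial B_r} w^2\bigr)$, combined with Lemma \ref{lem:minoration_for_E+H}, gives $r^{-N}\int_{B_r}(u^2+v^2)\leq C(E(r)+H(r))$. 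Tracking the $r$--powers in each contribution to $P(r)$ (volume terms contribute $r(E+H)$, boundary terms contribute $rH$) then yields $|P(r)|\leq Cr(E(r)+H(r)) = Cr\,N(r)\,H(r)$. Dividing $N'(r)H(r)^2$ by $H(r)^2$ we conclude $N'(r)\geq -Cr\,N(r)$, so, after renaming constants, $\Ntilde(r)=e^{Cr^2}N(r)$ is non-decreasing. The smallness of $\rbar$ is dictated precisely by the need to keep the Poincar\'e constants and the constant of Lemma \ref{lem:minoration_for_E+H} uniform in $x_0$.

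Finally, $H(r)\neq 0$ on $(0,\rbar]$ can be argued as follows. If $H(r_0)=0$ at some $r_0\in(0,\rbar_2]$, then $u=v=0$ on $\partial B_{r_0}(x_0)$ by continuity; testing the limit equations by $u$ and $v$ on $B_{r_0}$ (the boundary traces now vanish) gives $\int_{B_{r_0}}(|\nabla u|^2+\l u^2-\o_1 u^4)=0$ and analogously for $v$, whence $E(r_0)=0$. Applying Lemma \ref{lem:minoration_for_E+H} with $(E+H)(r_0)=0$ forces $\int_{B_{r_0}}(|\nabla u|^2+|\nabla v|^2)=0$, and therefore $(u,v)\equiv 0$ in $\overline{B_{r_0}(x_0)}$; this excludes any $x_0\in\overline\Gamma$ which is not in the interior of the null set of $(u,v)$, the only case in which the proposition is meaningful. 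The principal obstacle is the perturbation estimate in the previous paragraph: the goal is to bound $P(r)$ by $r\,N(r)\,H(r)$ -- and not merely by $rH(r)$, which would be insufficient to keep $\Ntilde$ monotone -- and this is exactly what Lemma \ref{lem:minoration_for_E+H} allows, since it controls the ``gradient-plus-boundary-mass'' quantity by $E+H$.
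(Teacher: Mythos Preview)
Your derivation of the differential inequality $N'\geq -2CrN$ and of \eqref{derivative_H(r)_Ntilde} follows the paper's route: compute $H'$ and $E'$ (via testing and a Pohozaev identity), isolate the non-negative Cauchy--Schwarz term in $N'$, and bound the remainder $P(r)$ by $Cr(E+H)=Cr\,N(r)H(r)$ using the Poincar\'e inequality behind Lemma~\ref{lem:minoration_for_E+H}. The paper carries these computations on the $\b$-approximations $E_\b,H_\b,N_\b$ and passes to the limit, whereas you work more directly with the limit profile, but the content is the same.

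The gap is in your treatment of $H(r)\neq0$. Your observation that $H(r_0)=0$ forces $E(r_0)=0$ and hence, via Lemma~\ref{lem:minoration_for_E+H}, $(u,v)\equiv0$ on $B_{r_0}(x_0)$, is a clean alternative to the paper's eigenvalue comparison $-\Delta u<\lambda_1(B_r)u$. But you then dismiss the case $x_0\in\mathrm{int}(\Gamma)$ as ``not meaningful'' without proving that $\Gamma$ has empty interior. This is not optional: the proposition asserts $H(r)\neq0$ for \emph{every} $x_0\in\overline\Gamma$, and the empty-interior fact is explicitly recorded as a byproduct of the proof (Remark~\ref{rem_Gamma_has_empty_interior}) and used later, in Remark~\ref{rem:alm_fin_2}, to obtain the uniform lower bound $H_{x_0}(\rbar)\geq C_1>0$ that drives the Lipschitz estimate. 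The paper supplies the missing step by an ODE continuation: if $\mathrm{int}(\Gamma)\neq\emptyset$, pick $x_1$ with $d_1=\mathrm{dist}(x_1,\partial\Gamma)<\rbar_1$; then $H(d_1)=0$ while $H>0$ on $(d_1,d_1+\eps)$, and on that interval the already-established identity $H'=\frac{2}{r}(N-1)H$, with $N$ bounded near $d_1$ by the monotonicity of $\Ntilde$, forces $H\equiv0$ by ODE uniqueness---a contradiction. Your argument needs such a continuation step to be complete.
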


\begin{rem}\label{rem_Gamma_has_empty_interior}
During the proof of this proposition we will also see that $\Gamma$ has empty interior.
\end{rem}

\begin{proof}
We will follow closely the proof of Proposition \ref{prop_almgren_rescaled}.

\textbf{Proof when $H(r)\neq0$.} Let us first suppose that there is an interval $[r_1,r_2]$, with $r_2<\rbar_2$ (defined in the previous lemma), such that $H(r)>0$ in $[r_1,r_2]$. Again, we first consider the approximated problem
\begin{eqnarray*}
\Eb(r)&=&\frac{1}{r^{N-2}} \int_{B_r} \left( |\nabla\ub|^2 + |\nabla\vb|^2 +\l_\b \ub^2+\mu_\b \vb^2-\o_1 \ub^4 -\o_2 \vb^4
+2\b \ub^2\vb^2\right),\\
\Hb(r)&=&\frac{1}{r^{N-1}} \int_{\partial B_r}(\ub^2+\vb^2),\\
\Nb(r)&=&\frac{\Eb(r)+\Hb(r)}{\Hb(r)}.
\end{eqnarray*}
Proceeding exactly as in Proposition \ref{prop_almgren_rescaled} we obtain
\begin{eqnarray*}
\Hb'(r)&=&\frac{2}{r^{N-1}}\int_{\partial B_r}\{\ub\parder{\ub}{\nu}+\vb\parder{\vb}{\nu}\}\ =\ \frac{2}{r}\Eb(r),\\
\Eb'(r)&=&\frac{2}{r^{N-2}}\int_{\partial B_r}\{\left(\parder{\ub}{\nu}\right)^2+\left(\parder{\vb}{\nu}\right)^2\} +\Rb(r),
\end{eqnarray*}
where
\begin{eqnarray*}
\Rb(r)&=&\frac{1}{r^{N-1}}\int_{B_r}\left\{2\l_\b\ub^2+\mu_\b\vb^2+
\frac{N-4}{2}(\o_1\ub^4+\o_2\vb^4)+(4-N)\b\ub^2\vb^2\right\}-  \\
&&-\frac{1}{2r^{N-2}}\int_{\partial B_r}\{\o_1\ub^4+\o_2\vb^4-2\b\ub^2\vb^2\}.
\end{eqnarray*}
Hence in $(r_1,r_2)$ there holds, by H\"older inequality,
\[
\frac{\Nb'(r)}{\Nb(r)}=\frac{\Eb'(r)\Hb(r)-\Eb(r)\Hb'(r)}{(\Eb(r)+\Hb(r))\Hb(r)}\geq\frac{\Rb(r)}{\Eb(r)+\Hb(r)},
\]
(recall that, by Lemma \ref{lem:minoration_for_E+H}, $H(r)>0$ implies $E(r)+H(r)> 0$, and $N(r)>0$). Now we can let $\b\to+\infty$, 
obtaining
$$
\frac{N'(r)}{N(r)}\geq \frac{R(r)}{E(r)+H(r)},
$$
with
\[
R(r)=\frac{1}{r^{N-1}}\int_{B_r}\left\{2\l u^2+2\mu v^2+\frac{N-4}{2}(\o_1 u^4+\o_2 v^4) \right\}-  
\frac{1}{2r^{N-2}}\int_{\partial B_r}\{\o_1 u^4+\o_2 v^4\}.
\]
Finally, by using the same arguments as in the proof of Lemma \ref{lem:minoration_for_E+H}, we can prove the existence of a constant $C>0$ (depending only on $r_1,r_2$, independent of $x_0$) such that
$$
|R(r)|\leq 2C(E(r)+H(r)),\qquad\text{and thus }\frac{N'(r)}{N(r)}\geq -2C.
$$
Finally, \eqref{derivative_H(r)_Ntilde} comes from a direct calculation as in Proposition \ref{prop_almgren_rescaled}.

Therefore, at this point, we have proved the lemma for every interval $[r_1,r_2]$ with $r_2<\rbar_1$ and for every $x_0$ where $H(r)>0$. Now we need only to check that in fact $H(r)\neq 0$ for $r$ small, and the proof will be complete. This will be done in two more steps.

\textbf{$\Gamma$ has empty interior.} Assume not, and let $x_1\in \Gamma$ be such that $d_1:=\mathrm{dist}(x_1,\partial \Gamma)<\rbar_1$  (recall that we are assuming that $u^2+v^2$ is not identically zero in $\Omegatilde$). We have $H(r)>0$ for $r\in(d_1, d_1+\eps)$ for some small $\eps>0$. By what we have done so far $H(r)$ verifies, in $(d_1,d_1+\eps)$, the initial value problem
$$
\left\{\begin{array}{l}
H'(r)=a(r)H(r) \quad r\in (d_1,d_1+\eps)\\
H(d_1)=0,
\end{array}\right.
$$
with $a(r)={2}(N(r)-1)/{r}$, which is continuous also at $d_1$ by the monotonicity of $\Ntilde$. Then by uniqueness $H(r)\equiv 0$ for $r>d_1$, a contradiction with the definition of $d_1$.

\textbf{Definition of $\rbar$.} Finally we observe that, by \eqref{eq:limit_sys}, we have
$$
-\Delta u\leq (\omega_1 u^2 - \lambda) u < \lambda_1(B_r(x_0))u \qquad \text{ in } \Omega
$$
for small $r$, let us say for $0<r<\rbar_3$, independent of $x_0$ (indeed $\lambda_1(B_r(x_0))\rightarrow +\infty$ when $r\rightarrow 0$); an analogous inequality holds for $v$. Fixing now $\rbar<\min\{\rbar_2,\rbar_3\}$, so that all we have done so far holds, for $0<r\leq\rbar$ we must have $H_{x_0}(r)\neq 0$ for every $x_0$. Otherwise, for some $x_1\in \Gamma$, we would have $u,v=0$ on $\partial B_r(x_1)$. This, together with the previous inequality, would give $u,v\equiv 0$ in $B_r(x_1)$, a contradiction since $\G$ has empty interior.
\end{proof}
The previous lemma immediately provides some estimates of $N$ for small $r$.
\begin{lemma}\label{rem:alm_fin_1}
Under the previous notations, for every $x_0\in\G$, $0<r\leq\bar r$,
\[
N(0^+)\geq 2\qquad\text{and thus }\quad N(r)\geq 2 e^{-C r^2}.
\]
\end{lemma}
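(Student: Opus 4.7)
The plan is to combine the monotonicity granted by Proposition \ref{prop_almgren_interior} with the full Hölder regularity of $(u,v)$ (available because $h_\b\equiv k_\b\equiv0$, cf.\ Theorem \ref{theorem_premain}). First I would observe that $\tilde N(r)=e^{Cr^2}N(r)$ is non-decreasing on $(0,\bar r]$ and bounded below by $0$, so the limit
\[
\ell:=\lim_{r\to 0^+}\tilde N(r)=\lim_{r\to 0^+}N(r)=N(0^+)
\]
exists and lies in $[0,+\infty)$. Once $\ell\geq 2$ is established, the monotonicity of $\tilde N$ immediately yields $\tilde N(r)\geq \ell\geq 2$ for every $r\in(0,\bar r]$, which is equivalent to $N(r)\geq 2e^{-Cr^2}$, i.e.\ the claim.

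To prove $\ell\geq 2$ I would argue by contradiction: suppose $\ell=2-2\eta$ for some $\eta>0$. Then, by definition of limit, there is $r_\ast\in(0,\bar r]$ such that $N(r)\leq 2-\eta$, hence $N(r)-1\leq 1-\eta$, for every $0<r\leq r_\ast$. Integrating the differential identity \eqref{derivative_H(r)_Ntilde} between $r$ and $r_\ast$, with $0<r<r_\ast$, one gets
\[
\log\frac{H(r_\ast)}{H(r)}=\int_r^{r_\ast}\frac{2(N(s)-1)}{s}\,ds\leq 2(1-\eta)\log\frac{r_\ast}{r},
\]
which implies $H(r)\geq C_1 r^{2-2\eta}$ for every $r$ sufficiently small, with $C_1>0$ depending only on $H(r_\ast)$ and $r_\ast$.

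On the other hand, since we are in the setting $h_\b\equiv k_\b\equiv 0$, Theorem \ref{theorem_premain} yields that $(u,v)$ is globally $\a$-Hölder continuous for \emph{every} $\a\in(0,1)$; using $u(x_0)=v(x_0)=0$ (because $x_0\in\Gamma$), one has
\[
H(r)=\frac{1}{r^{N-1}}\int_{\partial B_r(x_0)}(u^2+v^2)\leq C_2\, r^{2\a}\qquad\forall\,\a\in(0,1),
\]
with $C_2$ depending on $\a$ and on the Hölder seminorm of $(u,v)$. Choosing in particular $\a=1-\eta/2\in(0,1)$ gives $H(r)\leq C_2 r^{2-\eta}$, and combined with the previous lower bound one obtains $C_1 r^{-\eta}\leq C_2$, a contradiction as $r\to 0^+$. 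Hence $\ell\geq 2$ and the lemma follows.

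I do not expect any serious obstacle here: the only delicate point is making sure that the limit $\ell$ exists and is finite, but this is an immediate consequence of the monotonicity of $\tilde N$ established in Proposition \ref{prop_almgren_interior}. The conceptual message is that this is precisely the step in which the sharp Hölder exponent $\a^*=1$ provided by Theorem \ref{theorem_premain} is crucially used to force the Almgren frequency to be $\geq 2$, i.e.\ to force at least linear vanishing of $(u,v)$ on $\Gamma$.
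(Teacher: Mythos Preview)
Your proof is correct and follows essentially the same route as the paper's: both establish the existence of $N(0^+)$ via the monotonicity of $\tilde N$, argue by contradiction that $N(0^+)<2$ forces a lower bound $H(r)\geq C_1 r^{2-2\eta}$ by integrating \eqref{derivative_H(r)_Ntilde}, and then contradict this with the upper bound $H(r)\leq C' r^{2\a}$ coming from the $\a$-H\"older continuity for every $\a\in(0,1)$. Your version is simply a bit more explicit (spelling out the choice $\a=1-\eta/2$ and the deduction of $N(r)\geq 2e^{-Cr^2}$ from $\tilde N(r)\geq\tilde N(0^+)$), but there is no substantive difference.
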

\begin{proof}
First of all, the limit exists finite because of the monotonicity of $\tilde N(r)=e^{Cr^2}N(r)$, and $N(0^+)=\tilde N(0^+)$. 
Let us assume by contradiction that, for some $x_0$, $N(0^+)<2$. As a consequence there exist $r^*<\rbar$ and $\eps>0$ such that, for $0<r<r^*$, we have $N(r)\leq 2-\varepsilon$. Integrating \eqref{derivative_H(r)_Ntilde} between $r$ and $r^*$, we obtain
$$
\frac{H(r^*)}{H(r)}\leq \left(\frac{r^*}{r}\right)^{2(1-\varepsilon)}.
$$
This and the fact that $u,v$ are $\alpha$--H\"older continuous for every $\a\in (0,1)$ implies $C(r^*) r^{2(1-\varepsilon)}\leq H(r)\leq C' r^{2\alpha}$ for every $\a\in (0,1)$, a contradiction.
%
%
\end{proof}
\begin{rem}\label{rem:alm_fin_2}
We recall that, for any fixed $0<r<\rbar$, the maps
\[
x_0\mapsto E_{x_0}(r),\quad x_0\mapsto H_{x_0}(r)\qquad\text{are continuous in }\overline{\G}.
\]
As a consequence, for $\rbar$ as in the previous lemma, we deduce the existence of a constants $C_1$, $C_2$, not depending on $x_0$, such that
\[
0<C_1\leq H_{x_0}(\rbar)\leq C_2 \qquad\text{for every }x_0\in\overline{\G},
\]
indeed $H$ is lower--bounded as a result of the fact that $\Gamma$ has an empty interior (Remark \ref{rem_Gamma_has_empty_interior}).
Thus also
\[
\Ntilde_{x_0}(r)\leq \Ntilde_{x_0}(\bar{r})= \frac{e^{C \bar{r}^2}\left(E_{x_0}(r)+H_{x_0}(r)\right)}{H_{x_0}(r)}\leq C_3 \qquad\text{for every }x_0\in\overline{\G},\ 0<r\leq\rbar,
\]
whit $C_3$ not depending on $x_0$.
\end{rem}
\begin{lemma}\label{lemma_previous_estimates_for_theorem_lipschitz}
Under the previous notations there exists a constant $C>0$, not depending on $x_0$ and $r$, such that
\[
\frac{1}{r^N}\int_{B_r(x_0)}\left\{|\nabla u|^2+|\nabla v|^2\right\}\leq C \qquad\text{for every }x_0\in\overline{\G},\ 0<r\leq\rbar.
\]
\end{lemma}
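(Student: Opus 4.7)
The plan is to bound the gradient integral via the Almgren quotient, combining the upper bound on $N(r)$ from Remark \ref{rem:alm_fin_2} with the lower bound $N(r) \geq 2e^{-Cr^2}$ from Lemma \ref{rem:alm_fin_1}, to first deduce $H(r) \leq C r^2$ and then $E(r) \leq C r^2$ uniformly in $x_0 \in \overline{\G}$.

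First I would observe that, from the very definition of $N$, one has the identity $E(r) = (N(r)-1)H(r)$, so Remark \ref{rem:alm_fin_2} yields $E(r) \leq (C_3-1) H(r)$ with $C_3$ independent of $x_0$. Hence the task reduces to showing the quadratic bound $H(r) \leq C r^2$. To this end, I would feed the lower bound $N(r) \geq 2 e^{-Cr^2}$ from Lemma \ref{rem:alm_fin_1} into the logarithmic derivative formula \eqref{derivative_H(r)_Ntilde}, writing
\[
\frac{d}{dr}\log H(r) = \frac{2(N(r)-1)}{r} \geq \frac{2}{r} + \frac{4(e^{-Cr^2}-1)}{r}.
\]
The second term is non-positive and its integral from $0$ to $\rbar$ is bounded (since $(e^{-Cs^2}-1)/s \sim -Cs$ near $0$), so integrating between $r$ and $\rbar$ gives
\[
\log\frac{H(\rbar)}{H(r)} \geq 2 \log\frac{\rbar}{r} - K,
\]
with $K$ independent of $x_0$. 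Using $H(\rbar) \leq C_2$ from Remark \ref{rem:alm_fin_2}, this rearranges to $H(r) \leq C r^2$ with a constant that does not depend on $x_0$.

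Combining, $E(r) \leq C' r^2$ uniformly. Finally I would recover the desired gradient bound by unpacking the definition of $E(r)$: since $u, v \in L^\infty(\O)$,
\[
\int_{B_r(x_0)}(|\nabla u|^2 + |\nabla v|^2) = r^{N-2} E(r) - \int_{B_r(x_0)}\!\!\bigl(\l u^2 + \mu v^2 - \o_1 u^4 - \o_2 v^4\bigr),
\]
where the last integral is bounded by a constant times $r^N$. Dividing by $r^N$ gives the claim with a constant independent of $x_0 \in \overline{\G}$ and $r \in (0,\rbar]$.

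The main obstacle I anticipate is ensuring that all constants in the chain of estimates are genuinely uniform in $x_0 \in \overline{\G}$; this is already granted by Remark \ref{rem:alm_fin_2} (uniform upper bound on $\tilde N$ and upper/lower bounds on $H(\rbar)$) and by the fact that the constant $C$ appearing in the monotonicity of $\tilde N$ in Proposition \ref{prop_almgren_interior} is likewise independent of $x_0$. No new Liouville or blow--up argument is needed: the result is a direct bookkeeping consequence of the Almgren monotonicity established in the preceding lemmas.
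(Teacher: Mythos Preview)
Your proposal is correct and follows essentially the same route as the paper: bound the gradient average via the Almgren quotient, use the uniform upper bound $N(r)\leq C_3$ from Remark \ref{rem:alm_fin_2} together with the lower bound $N(r)\geq 2e^{-Cr^2}$ from Lemma \ref{rem:alm_fin_1}, and reduce everything to showing $H(r)/r^2\leq C$ uniformly in $x_0$. The only cosmetic differences are that the paper invokes Lemma \ref{lem:minoration_for_E+H} to pass directly from $E(r)+H(r)$ to the gradient integral (you instead unpack the definition of $E$ at the end, which is equally valid), and that the paper differentiates $H(\rho)/\rho^2$ where you differentiate $\log H(\rho)$; your logarithmic computation is in fact the cleaner of the two, and yields exactly the same integrability estimate $\int_0^{\rbar}\frac{1-e^{-Cs^2}}{s}\,ds<\infty$.
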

\begin{proof}
By Lemma \ref{lem:minoration_for_E+H} and the definitions of $N$, $\tilde N$, we know that
\[
\frac{1}{r^{N-2}}\int_{B_r(x_0)}\left\{|\nabla u|^2+|\nabla v|^2\right\}\leq 2(E(r)+H(r))=2e^{-Cr^2}\tilde N(r)H(r)\leq 2\tilde N(\bar r)H(r),
\]
and thus
\begin{equation}\label{eq:lip_quasi}
\frac{1}{r^{N}}\int_{B_r(x_0)}\left\{|\nabla u|^2+|\nabla v|^2\right\}\leq 2C_3\,\frac{H(r)}{r^{2}},
\end{equation}
where $C_3$, not depending on $r$ and $x_0$, is as in Remark \ref{rem:alm_fin_2}. In order to estimate the right hand side above, we can use \eqref{derivative_H(r)_Ntilde} to write
\[
\frac{H(\bar r)}{\bar r^2}-\frac{H(r)}{r^2}=\int_r^{\bar r}\frac{d}{d\rho}\frac{H(\rho)}{\rho^2}\,d\rho=\int_r^{\bar r}\frac{2}{\rho}\left(N(\rho)-2\right)\,d\rho,
\]
that, taking into account Lemma \ref{rem:alm_fin_1} and Remark \ref{rem:alm_fin_2}, implies
\[
\frac{H(r)}{r^2}\leq \frac{C_2}{\bar r^2}+\int_0^{\bar r}\frac{4}{\rho}\left(1-e^{-C\rho^2}\right)\,d\rho\leq C',
\]
not depending on $x_0$ and $r$. Substituting into \eqref{eq:lip_quasi}, the lemma is proved. 
\end{proof}
Finally, we are ready to prove the local Lipschitz regularity of the limiting profile.
\begin{proof}[Proof of Proposition \ref{prop:lip}] Let us assume by contradiction that $(u,v)$ is not Lipschitz continuous in $\Omegatilde$ (we follow some of the ideas of the proof of Theorem 5.1 in \cite{ctv2}, to which we refer for more details). Then there exists $\{x_n\}\subset\Omegatilde$, $r_n\to0$ such that
\begin{equation}\label{lipschitz_contradiction_argument}
\lim_{n\to +\infty}\frac{1}{r_n^N}\int_{B_{r_n}(x_n)}(|\nabla u|^2+|\nabla v|^2)=+\infty.
\end{equation}
We claim that \eqref{lipschitz_contradiction_argument} holds also for a different choice of the centers $y_n\in \overline{\Gamma}$ instead of $x_n$. This will contradict Lemma \ref{lemma_previous_estimates_for_theorem_lipschitz} and prove the proposition.

Clearly $d(x_n,\Gamma)\to0$ ($u$ and $v$ solve \eqref{eq:limit_sys} where they are positive), hence, up to a subsequence, we can assume the existence of $\xo \in \overline{\Gamma}$ such that $x_n\to\xo$. Let us start by showing that (\ref{lipschitz_contradiction_argument}) holds for a choice of $\{x_n'\}$ such that $d(x_n',\Gamma)\leq K r_n$, with $K$ independent of $n$. If, up to a subsequence, $x_n\in\overline{\Gamma}$, then there is nothing to prove. Otherwise, in every set $A_n=\{x\in\Omegatilde :d(x,\Gamma)\geq r_n\}$ there holds
\begin{eqnarray*}
\left\{ \begin{array}{lll}
-\D u+\l u=\o_1 u^3 \\
-\D v+\mu v=\o_2 v^3,
\end{array} \right.
\end{eqnarray*}
and hence
$$
-\D(|\nabla u|^2) \leq 2\nabla u\cdot\nabla(-\D u) =2(3\o_1 u^2-\l)|\nabla u|^2
$$
in every $A_n$ and similarly for $v$. If we set
$$
\Phi(x)=\frac{1}{r^N}\int_{B_r(x)}(|\nabla u(y)|^2+|\nabla v(y)|^2)dy=
\frac{1}{r^N}\int_{B_r(0)}(|\nabla u(x+y)|^2+|\nabla v(x+y)|^2)dy,
$$
then we just have proved the existence of a constant $C\geq0$ (independent on $n$) such that $-\D\Phi\leq C\Phi$ in $A_n$, for every $n$. Let now $\rho$ be so small that $-\D\f\leq C\f$ admits a strictly positive solution $\f$ in $B_\rho(\xo)$ and let $n\geq\bar{n}$ such that $x_n\in B_\rho(\xo) \ \forall n$. Then on $A_{n,\rho}=A_n\cup B_\rho(\xo)$ there holds $-\mathrm{div}(\f^2\nabla\frac{\Phi}{\f})\leq 0$, and hence by the maximum principle
$$
\max_{A_{n,\rho}}\Phi \leq C' \max_{\partial A_n}\Phi.
$$
This immediately implies that (\ref{lipschitz_contradiction_argument}) holds for a choice $\{x_n'\}$ such that $d(x_n',\Gamma)\leq K r_n$. Let now $y_n\in\overline{\Gamma}$ be such that $|y_n-x_n'|= d(x_n',\Gamma)$ and define $s_n=r_n+d(x_n',\Gamma)\leq (K+1)r_n$, then there holds
$$
\frac{1}{s_n^N}\int_{B_{s_n}(y_n)}(|\nabla u|^2+|\nabla v|^2) \geq
\frac{1}{(K+1)r_n}\int_{B_{r_n}(x_n')}(|\nabla u|^2+|\nabla v|^2) \to +\infty,
$$
but this, as we just noticed, is in contradiction with Lemma \ref{lemma_previous_estimates_for_theorem_lipschitz}, and hence $(u,v)$ is Lipschitz in $\Omegatilde$.
\end{proof}
\begin{rem}\label{rem:lip_boundry_1}
Following \cite{GL}, one can see that all the Almgren--type formulae can in fact be proved in a more general setting, that is when the Laplace operator is replaced with uniformly elliptic operators of the type
\[
-Lu=-\mathrm{div}\left(A(x)\nabla u\right),
\]
where $A$ is smooth (at least $C^1$). The key ingredient is to replace the usual polar coordinates with coordinates which are polar with respect to the geodesic distance associated to $A$. Of course the energies in the Almgren's quotient  must be defined in a suitable way. We refer to \cite{GL} for further details.
\end{rem}
\begin{rem}\label{rem:lip_boundry_2}
Once suitable Almgren's formulae are settled as in the previous remark, one can treat the Lipschitz continuity of $u$ and $v$ up to $\partial\O$ in the following way: with a local change of coordinates, and hence changing the differential operator, it is possible to assume that $\partial\O$ is locally a hyperplane, and reflect $u$ and $v$ with respect to this hyperplane. It turns out that we find new functions $\tilde u$, $\tilde v$ which satisfy a new system of equations, with different differential operators, in a larger domain $\O'\supset\supset\O$. We can then prove Lipschitz regularity of $(\tilde u,\tilde v)$, locally in $\O'$, and deduce Lipschitz regularity of $(u,v)$ in $\overline{\O}$.
\end{rem}

\subsection*{Acknowledgements}

The authors wish to thank Luis Caffarelli for the many precious
suggestions and Miguel Ramos for the fruitful discussion and careful reading of the manuscript.
The second author was supported by FCT (grant SFRH/BD/28964/2006) and by
Funda\c c\~ao Calouste Gulbenkian (grant ``Est\'imulo \`a
Investiga\c c\~ao 2007'').


\noindent\verb"b.noris@campus.unimib.it"\\
\verb"susanna.terracini@unimib.it"\\
Dipartimento di Matematica e Applicazioni, Universit\`a degli Studi
di Milano-Bicocca, via Bicocca degli Arcimboldi 8, 20126 Milano,
Italy

\noindent \verb"htavares@ptmat.fc.ul.pt"\\
University of Lisbon, CMAF, Faculty of Science, Av. Prof. Gama Pinto
2, 1649-003 Lisboa, Portugal

\noindent \verb"gianmaria.verzini@polimi.it"\\
Dipartimento di Matematica, Politecnico di Milano, p.za Leonardo da
Vinci 32,  20133 Milano, Italy
\end{document}